\newcommand{\bF}{{\bf F}}
\newcommand{\matV}{\ensuremath{\mathcal{V}}}
\newcommand{\matGM}{\ensuremath{\mathcal{RM}}}
\newcommand{\matGS}{\ensuremath{\mathcal{RS}}}
\newcommand{\lfis}{{\bf LFI}s}
\newcommand{\mbc}{{\bf mbC}}
\newcommand{\mbCcl}{{\bf mbCcl}}
\newcommand{\cila}{{\bf Cila}}
\newcommand{\A}{\ensuremath{\mathcal{A}}}
\newcommand{\B}{\ensuremath{\mathcal{B}}}
\newcommand{\matM}{\ensuremath{\mathcal{M}}}
\newcommand{\matF}{\ensuremath{\mathcal{F}}}
\newcommand{\MP}{\textbf{MP}}
\newcommand{\kax}{{\bf Ax1}}
\newcommand{\axTrans}{{\bf Ax2}}
\newcommand{\axed}{{\bf Ax3}}
\newcommand{\axeea}{{\bf Ax4}}
\newcommand{\axeeb}{{\bf Ax5}}
\newcommand{\axouda}{{\bf Ax6}}
\newcommand{\axoudb}{{\bf Ax7}}
\newcommand{\axoue}{{\bf Ax8}}
\newcommand{\axouimp}{{\bf Ax9}}
\newcommand{\axtnd}{{\bf Ax10}}
\newcommand{\axexp}{{\bf bc}}
\newcommand{\axcf}{{\bf Ax11}}
\newcommand{\axdc}{{\bf dc}}
\newcommand{\axp}{{\bf P}}
\newcommand{\imp}{\to}
\newcommand{\cons}{\ensuremath{{\circ}}}
\newcommand{\sneg}{\ensuremath{{\sim}}}
\newtheorem{theorem}{Theorem}[section]
\newtheorem{lemma}[theorem]{Lemma}
\newtheorem{prop}[theorem]{Proposition}
\newtheorem{coro}[theorem]{Corollary}
\newtheorem{defi}[theorem]{Definition}
\newtheorem{remark}[theorem]{Remark}
\newtheorem{remarks}[theorem]{Remarks}
\title{\textbf{Restricted swap structures for da Costa's\\ $C_n$ and their category}}
\author{Coniglio, Marcelo E.\thanks{coniglio@unicamp.br} }
\author{Toledo, Guilherme V.\thanks{guivtoledo@gmail.com}}
\affil{Institute of Philosophy and the Humanities - IFCH and\\
Centre for Logic, Epistemology and The History of Science - CLE\\
University of Campinas - Unicamp\\
Campinas, SP, Brazil}
\providecommand{\keywords}[1]{\textbf{\textit{Keywords:}} #1}
\begin{document}

\setcounter{page}{1}     

\maketitle

\begin{abstract}
In a previous article we introduced the concept of restricted Nmatrices (in short, RNmatrices), which generalize Nmatrices in the following sense: a RNmatrix is a Nmatrix together with a {\em subset} of valuations over it, from which  the consequence relation is defined. Within this semantical framework we have characterized each paraconsistent logic $C_{n}$ in the hierarchy of da Costa by means of a $(n+2)$-valued RNmatrix, which also provides a relatively simple decision procedure for each calculus (recalling that $C_1$ cannot be characterized by a single finite Nmatrix). In this paper we extend such RNmatrices for $C_{n}$ by means of what we call {\em restricted swap-structures} over arbitrary Boolean algebras, obtaining so a class of non-deterministic semantical structures which characterizes da Costa's systems. We give a brief algebraic and combinatorial description of the elements of the underlying RNmatrices. Finally, by presenting a notion of category of RNmatrices, we show that the category of RNmatrices for $C_{n}$ is in fact isomorphic to the category of non-trivial Boolean algebras.
  
\end{abstract}

\keywords{da Costa's C-systems; paraconsistent logics; non-deterministic semantics; non-deterministic matrices; swap structures; multialgebras.}

\section{Introduction}

Newton C. A. da Costa defined, in 1963 (\cite{dC63}), a hierarchy of logical systems $C_{n}$ with the aim of formalize paraconsistent reasoning. This was the first systematic approach to paraconsistency, recalling that the first formal  paraconsistent system was the {\em Discussive} or {\em Discursive Logic} introduced in 1948 by  Stanis\l aw Ja\'{s}kowski (\cite{jas:48,jas:49}). Indeed, da Costa's logics started a revolution in the field of non-classical logic, motivating the introduction of several new semantical frameworks to deal with their intrinsically difficult nature. Among these we can mention bivaluations (\cite{dCA:77, lop:alv:80}), Fidel structures (\cite{fid:77}), Nmatrices (\cite{avr:lev:01, avr:lev:05}), and swap structures (\cite{CC16}).

Nmatrices, considered first in the literature by Rescher and Ivlev (\cite{res:62,ivl:73,ivl:85,ivl:88,ivl:13}), generalize logical matrices by replacing an algebra with a multialgebra. By combining a different generalization of logical matrices proposed by Piochi (\cite{Piochi,Piochi2}), we have defined RNmatrices in a previous study (\cite{ConTol}). As mentioned there, RNmatrices were first considered by  Kearns with the aim of providing a new semantics for normal modal logics different to the standard Kripke semantics (\cite{kear:81}). Kearns' RNmatrices  approach to modal logics was afterwards considered in~\cite{con:far:per:15,con:far:per:16, omo:sku:16, OS:20}. In~\cite{gratz:21}, a decision procedure for several normal modal logics was obtained by Gr\"atz by refining the original RNmatrices originally proposed by Kearns. RNmatrices were also considered by Pawlowski and Urbaniak in the context of logics of informal provability (\cite{Pawlowski, paw:urb:18}). In~\cite{ConTol} we also  show how several different semantical methodologies may be recast as RNmatrices, including Fidel and swap structures, bivaluations, static Nmatrices (\cite{AK:05}), and PNmatrices (\cite{Baaz:13, CM:19}).

The most significative part of \cite{ConTol}, however, was the construction of finite ($n+2$-valued), manageable RNmatrices $\mathcal{RM}_{C_{n}}$ capable of characterize $C_{n}$. In particular, $\mathcal{RM}_{C_{n}}$ are a perfect example of the cases in which RNmatrices induce a row-branching truth-table where one can algorithmically select those rows that correspond to unwanted homomorphisms, leading therefore to a decision method for its respective logic.
We have, furthermore, provided a tableaux semantics built upon $\mathcal{RM}_{C_{n}}$ which, although not the first tableaux semantics for $C_{n}$ (\cite{d'ott:cast:06}), are very intuitive since are generated by  the corresponding  RNmatrices in a very natural way.

The construction of $\mathcal{RM}_{C_{n}}$ involves taking a bivaluation $\mathsf{b}$ for $C_{n}$ and associating to a formula $\alpha$ the $n+1$-tuple $(\mathsf{b}(\alpha), \mathsf{b}(\neg\alpha), \mathsf{b}(\alpha^{1}), \ldots  , \mathsf{b}(\alpha^{n-1}))$ (called a {\em snapshot}), a construction reminiscent of that of swap structures. This is not without reason, given the latter in fact motivated the former, but one is left to wonder whether this construction, carried over entirely upon the two-valued Boolean algebra (over which one defines bivaluations), may be generalized to any non-trivial Boolean algebras, as is the case with many swap structures: the answer is yes. This is important, first of all, for model-theoretical reasons, seeing that we present a class of non-isomorphic models for $C_{n}$ of varying complexities. However, this is also relevant as it suggests how to approach working categorically over $C_{n}$, given that the aforementioned class of models forms a nicely-behaving category.

We start this article with some preliminaries in Section~\ref{Preliminaries}, explicitly defining RNmatrices, da Costa's Calculi $C_{n}$ and the RNmatrices $\mathcal{RM}_{C_{n}}$. In Section~\ref{Bval} we begin by defining $\mathcal{B}$-valuations, generalizing bivaluations for an arbitrary Boolean algebra $\mathcal{B}$, and in Section~\ref{RS} we construct the expansions of $\mathcal{RM}_{C_{n}}$ by $\mathcal{B}$, aptly named $\mathcal{RM}_{C_{n}}^{\mathcal{B}}$, which we then show to characterize $C_{n}$. Section~\ref{Counting_snapshots} uses some elementary combinatorial methods to count the snapshots of the Nmatrix underlying $\mathcal{RM}_{C_{n}}^{\mathcal{B}}$, as well as its designated and classically-behaving elements, to show, specially in the finite case, how these objects have a rich structure, from an algebraic standpoint. Section~\ref{Category} suggests how one could approach the general problem of defining a category for an arbitrary class of RNmatrices and proceeds to apply this very definitions to the class of $\mathcal{RM}_{C_{n}}^{\mathcal{B}}$, for any non-trivial Boolean algebra $\mathcal{B}$. In what is a surprising result, we show that the restrictions on morphisms of categories of RNmatrices imply that the resulting category in $C_{n}$'s case is actually isomorphic to the category of non-trivial Boolean algebras. Some final considerations, as well as future works, are given in Section~\ref{FinRem}.

\section{Preliminaries}\label{Preliminaries}

\subsection{Restricted non-deterministic matrices} \label{defRNmat}

A (propositional) {\em signature} is a family $\Theta=\{\Theta_n\}_{n\in\mathbb{N}}$ of pairwise disjoint sets, elements of $\Theta_n$ being called {\em $n$-ary connectives}.
The $\Theta$-algebra freely generated by a set $\matV=\{p_{n}\}_{n\in\mathbb{N}}$ of propositional variables is denoted, here, by ${\bF}(\Theta,\matV)$, and its universe, the set of {\em formulas} over $\Theta$, by $F(\Theta, \matV)$. Endomorphisms of ${\bf F}(\Theta, \mathcal{V})$ (that is, homomorphisms from ${\bf F}(\Theta, \mathcal{V})$ to itself) are called {\em substitutions}.


Fixed a signature $\Theta$, a {\em logical matrix} is a pair $\mathcal{M}=(\mathcal{A}, D)$ such that: $\mathcal{A}$ is a $\Theta$-algebra; and $D$ is a subset of the universe of $\mathcal{A}$. A logical matrix $\mathcal{M}$ defines a consequence operator over $F(\Theta, \matV)$ such that $\Gamma \vDash_{\mathcal{M}}\varphi$ iff, for every homomorphism $\nu:{\bF}(\Theta, \matV)$, $\nu[\Gamma]\subseteq D$\footnote{Given a function $f:X \to Y$ and a set $Z \subseteq X$, $f[Z]$ denotes $\{f(x) \ : \ x \in Z\}$.} implies $\nu(\varphi)\in D$. Given a class $\mathbb{M}$ of logical matrices, $\Gamma \vDash_{\mathbb{M}}\varphi$ iff $\Gamma \vDash_{\mathcal{M}}\varphi, \forall\mathcal{M} \in \mathbb{M}$. 

\begin{defi} \label{defNmatrix}
Fix a signature $\Theta$.
\begin{enumerate}
\item For a set $A$, a pair $\mathcal{A}=(A,\{\sigma_{\mathcal{A}}\}_{\sigma\in\Theta})$ is said to be a {\em $\Theta$-multialgebra} if, for any $n$-ary $\sigma$, $\sigma_{\mathcal{A}}$ is a function from $A^{n}$ to $\wp(A)\setminus\{\emptyset\}$; $A$ is called the universe of $\mathcal{A}$.
\item Given $\Theta$-multialgebras $\mathcal{A}$ and $\mathcal{B}$, with universes $A$ and $B$, a {\em homomorphism} between $\mathcal{A}$ and $\mathcal{B}$ is a function $f:A\rightarrow B$ satisfying, for any $n$-ary $\sigma$ and elements $a_{1}, \ldots , a_{n}$ of $A$,
$f[\sigma_{\mathcal{A}}(a_{1}, \ldots , a_{n})]\subseteq \sigma_{\mathcal{B}}(f(a_{1}), \ldots , f(a_{n}))$.
\end{enumerate}
\end{defi}

We consider two main generalizations of the concept of a logical matrix. ~(1)~ The first, due to Piochi (\cite{Piochi,Piochi2}): a {\em restricted logical matrix}, or {\em Rmatrix}, over a signature $\Theta$ is a triple $\mathcal{M}=(\mathcal{A}, D, \mathcal{F})$ with $\mathcal{A}$ a $\Theta$-algebra; $D$ a subset of the universe of $\mathcal{A}$; and $\mathcal{F}$ a set of homomorphisms $\nu:{\bF}(\theta, \matV)\rightarrow\mathcal{A}$. Given a set of formulas $\Gamma\cup\{\varphi\}$ over $\Theta$, we say $\Gamma$ proves $\varphi$, according to the Rmatrix $\mathcal{M}$, and write $\Gamma\vDash^{\textsf{R}}_{\mathcal{M}}\varphi$ if, for every $\nu\in\mathcal{F}$, $\nu[\Gamma]\subseteq D$ implies $\nu(\varphi)\in D$.~(2)~ The second, due to several authors such as Rescher and Ivlev (\cite{res:62, ivl:73,ivl:85,ivl:88,ivl:13}) and, more recently Avron and Lev (\cite{avr:lev:01, avr:lev:05}): given a signature $\Theta$, a pair $\mathcal{M}=(\mathcal{A}, D)$ is a {\em non-deterministic matrix}, or {\em Nmatrix}, if $\mathcal{A}$ is a $\Theta$-multialgebra and $D$ is a subset of its universe; an Nmatrix defines a consequence operator on the formulas over $\Theta$ for which $\Gamma\vDash_{\mathcal{M}}\varphi$ iff $\nu[\Gamma]\subseteq D$ implies $\nu(\varphi)\in D$ for every homomorphism (of multialgebras) $\nu:{\bF}(\Theta,\matV)\rightarrow\mathcal{A}$.

Although versatile, all these methods have restrictions to their applications: in 1932, G\"odel proved that intuitionistic logic is not characterizable by a single finite logical matrix (\cite{god:32}). Dugundji adapted this proof to show an equivalent result for the modal systems between {\bf S1} and {\bf S5} (\cite{dug:40}). Nmatrices were first considered by Avron and Lev (\cite{avr:lev:01, avr:lev:05}) to deal with paraconsistent logics, especially with \lfis, exactly to overcome uncharacterizability by finite matrices (see, for instance, \cite{avr:05b,avr:07,CCM,CC16}). However, systems such as da Costa's $C_1$, despite being decidable, can not be characterized even by a single finite Nmatrix (\cite{avr:07}). In order to offer finite semantics of non-deterministic character for da Costa's hierarchy and other systems of similar difficulty, we have defined in \cite{ConTol} {\em restricted non-deterministic matrices}, alternatively called {\em restricted Nmatrices} or {\em RNmatrices}, independently defined by \cite{Pawlowski, paw:urb:18}. 

Given a signature $\Theta$, an RNmatrix is a triple $\mathcal{M}=(\mathcal{A}, D, \mathcal{F})$ such that $\mathcal{A}$ is a $\Theta$-multialgebra; $D$ is a subset of the universe of $\mathcal{A}$; and $\mathcal{F}$ is a set of homomorphisms (of multialgebras) $\nu:{\bF}(\Theta, \matV)\rightarrow\mathcal{A}$. As before, we may define a consequence operator as expected: for a set of formulas $\Gamma\cup\{\varphi\}$ over $\Theta$, $\Gamma\vDash^{\textsf{RN}}_{\mathcal{M}}\varphi$ iff, for every $\nu\in\mathcal{F}$, $\nu[\Gamma]\subseteq D$ implies $\nu(\varphi)\in D$. For most of what is to come, structural RNmatrices will be far more relevant: an RNmatrix $\mathcal{M}=(\mathcal{A}, D, \mathcal{F})$ is {\em structural} if, for every substitution $\rho$, $\Gamma\vDash^{\textsf{RN}}_{\mathcal{M}}\varphi$ implies $\rho[\Gamma]\vDash^{\textsf{RN}}_{\mathcal{M}}\rho(\varphi)$. Equivalently, $\mathcal{M}$ is structural if, for every $\nu\in\mathcal{F}$ and substitution $\rho$, $\nu\circ\rho\in\mathcal{F}$.

\subsection{da Costa's Calculi $C_n$, and other \lfis} \label{Cn}
 
We now formally define da Costa's hierarchy for completeness sake. We shall use the signature $\Sigma$ with $\Sigma_{1}=\{\neg\}$, $\Sigma_{2}=\{\vee, \wedge, \rightarrow\}$ and no other connectives. Some abbreviations are then useful to express otherwise excessively long formulas over this signature: for a formula $\alpha$ of $F(\Sigma, \matV)$, $\alpha^0 := \alpha$ and $\alpha^{n+1}:=\neg(\alpha^n \land \neg(\alpha^n))$ $n\in\mathbb{N}$; and $\alpha^{(0)} := \alpha$, $\alpha^{(1)}:=\alpha^1$ and $\alpha^{(n+1)}:=\alpha^{(n)} \land \alpha^{n+1}$, again for $n\in\mathbb{N}$. Inspired by \lfis, we may also denote $\alpha^1=\neg(\alpha \land \neg\alpha)$ by $\alpha^\circ$ (and so $\alpha^{\circ\cdots\circ}$ may designate $\alpha^k$, for $\circ \cdots \circ$ a sequence of $k$ iterations of $\circ$.)

\begin{defi} [The calculi $C_n$, for $n \geq 1$] \label{hilCn} For $n \geq 1$, we define the logic $C_n$ over $\Sigma$ by the following axiom schemata and rules of inference:\\[2mm]
	{\bf Axiom schemata:}\vspace*{-5mm}
	\begin{gather}
	\alpha \imp \big(\beta \imp \alpha\big)             \tag{\kax} \\
	\Big(\alpha\imp\big(\beta\imp\gamma\big)\Big) \imp
	\Big(\big(\alpha\imp\beta\big)\imp\big(\alpha\imp\gamma\big)\Big)
	\tag{\axTrans}\\
	\alpha \imp \Big(\beta \imp \big(\alpha \land \beta\big)
	\Big)  \tag{\axed}\\
	\big(\alpha \land \beta\big) \imp \alpha         \tag{\axeea}\\	
	\big(\alpha \land \beta\big) \imp \beta          \tag{\axeeb}\\
	\alpha \imp \big(\alpha \lor \beta\big)          \tag{\axouda}\\
	\beta \imp \big(\alpha \lor \beta\big)           \tag{\axoudb}\\
	\Big(\alpha \imp \gamma\Big) \imp \Big(
	(\beta \imp \gamma) \imp
	\big(
	(\alpha \lor \beta) \imp \gamma
	\big)\Big)                               \tag{\axoue}\\
	\big(\alpha \imp \beta\big) \lor \alpha          \tag{\axouimp}\\
	\alpha \lor \lnot \alpha                        \tag{\axtnd}\\
	\neg\neg \alpha \imp \alpha
	\tag{\axcf}\\
	\alpha^{(n)} \imp \Big(\alpha \imp \big(\lnot \alpha \imp \beta\big)\Big)
	\tag{\axexp$_n$}\\
	(\alpha^{(n)} \land \beta^{(n)}) \imp \big((\alpha \land \beta)^{(n)} \land (\alpha \lor \beta)^{(n)} \land (\alpha \to \beta)^{(n)}\big)
	\tag{\axp$_n$}
	\end{gather}	
	{\bf Inference rule:}
	\[\frac{\alpha \ \ \ \ \alpha\imp
		\beta}{\beta}  \tag{\MP}\]
\end{defi}

\begin{remark}
Originally (\cite{dC63}) da Costa had considered, instead of (\axexp$_n$), the axiom schema $\alpha^{(n)} \imp \big((\beta \to \alpha) \to ((\beta \to \neg\alpha) \to \neg\beta)\big)$, known as (\axdc$_n$), both easily proven to be equivalent given the other axiom schemata.
\end{remark}


\subsection{RNmatrices for $C_{n}$} \label{sect-Cn}

In \cite{ConTol}, we have constructed RNmatrices $\mathcal{RM}_{C_{n}}=(\mathcal{A}_{C_{n}}, D_{n}, \mathcal{F}_{C_{n}})$ for the calculi $C_{n}$, trough use of swap structures, to achieve rather efficient decision methods for these logics. To give a brief summary of how this was achieved, consider the $(n+1)$-tuples $z=(z_{[1]},z_{[2]},\ldots,z_{[n+1]})$ on $\{0,1\}^{n+1}$ such that $z_{[1]}$ trough $z_{[n+1]}$ are given, respectively, by $\mathsf{b}(\alpha)$, $\mathsf{b}(\neg\alpha)$, $\mathsf{b}(\alpha^1)$, $\mathsf{b}(\alpha^2)$, \ldots, $\mathsf{b}(\alpha^{n-1})$, for a formula $\alpha$ over $\Sigma$ and a $C_n$-bivaluation $\mathsf{b}$ (\cite{lop:alv:80}).\footnote{From now on, the $i$th-coordinate of an $(n+1)$-tuple $z$ on $\{0,1\}^{n+1}$ will be denoted by $z_{[i]}$.} From the properties of a bivaluation, we find that there are precisely $n+2$ of these tuples, which we will call snapshots, namely: $T_n=(1,0,1,\ldots,1)$, $t^n_0=(1,1,0,1,\ldots,1)$, \ldots  , $t^n_{n-2}=(1,1,\ldots,1,0)$, $t^n_{n-1}=(1,1,\ldots,1)$ and $F_n=(0,1,1,\ldots,1)$. It is clear that an $(n+1)$-tuple on $\{0,1\}^{n+1}$ is a snapshot iff it contains at most one $0$, or alternatively, the set of snapshots may be given as 
$$B_n=\{z \in {\bf 2}^{n+1} \ : \ \big(\bigwedge_{i=1}^k z_{[i]}\big) \lor z_{[k+1]} = 1 \ \mbox{ for every } \ 1 \leq k \leq n\}.$$ 

Important subsets of $B_{n}$ are $D_n=\{z \in B_n \ : \ z_{[1]}=1\}$, the set of designated values, and $Boo_n= \{z \in B_n \ : \ z_{[1]} \land z_{[2]}=0\}$, the set of Boolean values, equal respectively to $\{T_{n}, t^{n}_{0}, \ldots  , t^{n}_{n-1}\}$ and $\{T_{n}, F_{n}\}$. Also important are the inconsistent values, $I_{n}=B_{n}\setminus Boo_n$. Notice that $z \in Boo_n$ iff $z=(a,\sneg a, 1, \ldots, 1)$, for an $a \in \{0,1\}$ ($\sneg$ is the Boolean complement in the two-valued Boolean algebra). Now we define the $\Sigma$-multialgebra $\mathcal{A}_{C_{n}}$, with universe $B_n$, as a swap structure (\cite[Chapter~6]{CC16}); for a connective $\sigma$ in $\Sigma$ we will denote its corresponding operation in $\mathcal{A}_{C_{n}}$ as $\tilde{\sigma}$, and for elements $z, w\in B_{n}$, and $\#\in\{\vee, \wedge, \rightarrow\}$. they are given by

\

$\begin{array}{lccl}
(C^n_{\tilde{\neg}}) & \tilde{\neg}\, z &=& \{w \in B_n  \ : \ w_{[1]} = z_{[2]} \ \mbox{ and } \ w_{[2]} \leq z_{[1]}\}\\[4mm]
(C^n_{\tilde{\#}}) & z \,\tilde{\#}\, w &=& \left \{ \begin{tabular}{ll}
$\{u \in Boo_n  \ : \ u_{[1]} = z_{[1]} \# w_{[1]}\}$ & if $z,w\in Boo_n$,\\[3mm]
$\{u \in B_n  \ : \ u_{[1]} = z_{[1]} \# w_{[1]}\}$ & otherwise.\\
\end{tabular}\right.  \\[8mm]
\end{array}$

\

These multioperations of $\A_{C_n}$ may be presented in  a compact form as follows:

\begin{center}
	\begin{tabular}{| c | c | }
		\hline $z$ & $\tilde{\neg}\, z$  \\
		\hline
		
		$T_n$ & $F_n$\\
		\hline
		
		$t^n_i$ &  $D_n$\\
		\hline
		
		$F_n$ &  $T_n$\\
		\hline
		
	\end{tabular}
	\hspace{3.2cm}
	\begin{tabular}{| c | c | c | c |}
		\hline $\tilde{\to}$ & $T_n$ & $t^n_j$ & $F_n$ \\
		\hline
		
		$T_n$ & $T_n$ &  $D_n$ &  $F_n$\\
		\hline
		
		$t^n_i$ &  $D_n$  & $D_n$  & $F_n$\\
		\hline
		
		$F_n$ &  $T_n$ & $D_n$  & $T_n$\\
		\hline
		
	\end{tabular}
\end{center}

\begin{center}
	\begin{tabular}{| c | c | c | c |}
		\hline $\tilde{\land}$ & $T_n$ & $t^n_j$ & $F_n$ \\
		\hline
		
		$T_n$ & $T_n$ &  $D_n$  &  $F_n$\\
		\hline
		
		$t^n_i$ &  $D_n$  & $D_n$  & $F_n$\\
		\hline		
		
		$F_n$ &  $F_n$  & $F_n$  & $F_n$\\
		\hline
		
	\end{tabular}
	\hspace{1cm}
	\begin{tabular}{| c | c | c | c |}
		\hline $\tilde{\lor}$ & $T_n$ & $t^n_j$ & $F_n$ \\
		\hline
		
		$T_n$ & $T_n$ &  $D_n$  &  $T_n$\\
		\hline
		
		$t^n_i$ &  $D_n$  & $D_n$  & $D_n$\\
		\hline
		
		$F_n$ &  $T_n$  & $D_n$  & $F_n$\\
		\hline
		
	\end{tabular}
\end{center}

We finally define the set of restricted homomorphisms $\mathcal{F}_{C_{n}}$, thus finishing the definition of $\mathcal{RM}_{C_{n}}=(\mathcal{A}_{C_{n}}, D_{n}, \mathcal{F}_{C_{n}})$, as the set of all homomorphisms $\nu:{\bF}(\Sigma, \matV)\rightarrow\A_{C_n}$ satisfying that, for every $\alpha$:\\
	
	$\begin{array}{cl}
	(1) & \nu(\alpha) = t^n_0 \ \mbox{ implies that } \ \nu(\alpha \land \neg\alpha)=T_n;\\[1mm]
	(2) & \nu(\alpha) = t^n_{k-1} \ \mbox{ implies that } \ \nu(\alpha \land \neg\alpha) \in I_n$ and $\nu(\alpha^1) = t^n_{k-2},   \\[1mm]
	& \mbox{for every } \ 2\leq k\leq n.\\[3mm]
	\end{array}$

We prove, already in \cite{ConTol}, that $\mathcal{RM}_{C_{n}}$ semantically characterizes $C_{n}$ and, furthermore, that its respective row-branching truth-table is a decision method for this logic.

\section{$\mathcal{B}$-valuations} \label{Bval}

In~\cite[Chapter~6]{CC16} it was shown that, in the case of \lfis\ which are characterized by a single finite Nmatrix such as \mbc, it is possible to replace the underlying two-element Boolean algebra $\B_2$ with domain ${\bf 2}=\{0,1\}$ by an arbitrary (non-trivial)  Boolean algebra  \B. This produces a class of Nmatrices  parametrized by  Boolean algebras, called {\em swap structures semantics}.\footnote{Moreover,  there is a functor from the category of Boolean algebras to the category of swap structures --a full subcategory of the category of multialgebras over $\Sigma^\cons$, the signature obtained from $\Sigma$ by addition of the unary $\cons$ (see~\cite{Coniglio}).} The aim of this generalization is to produce a wider class of models in order to study these logics by adapting the tools from algebraic logics to the context of multialgebras (see, for instance, \cite{Coniglio}). However, logics such as $C_{1}$ lie outside the scope of swap structures semantics (this is related to the uncharacterizability of this logic by a single finite Nmatrix, as mentioned above). The aim of the next two sections is, in the same way as the class of swap structures generalize finite Nmatrices defined over $\B_2$ to any Boolean algebra \B, to generalize the RNmatrix $\matGM_{C_n}$ to any Boolean algebra \B.

In order to do this observe that  it is possible to replace, in the definitions from the previous section, the Boolean algebra  $\B_2$ by an arbitrary Boolean algebra  \B.  Notice first that all the notions concerning the Nmatrix $\matM_{C_n} = (\mathcal{A}_{C_{n}}, D_{n})$ underlying the RNmatrix $\matGM_{C_n}$ were presented in general terms, involving the Boolean operators of  $\B_2$ and the elements $0$ and $1$ of {\bf 2} (which are present in any \B). To begin with, the domain $B_n$, the sets $D_n$ and $Boo_n$, as well as the multioperations of $\A_{C_n}$, can be easily defined over any Boolean algebra \B. In the case of bivaluations, some small adjustments are required in order to generalize to arbitrary Boolean algebras. From now on, only  non-trivial Boolean algebras will be considered.\footnote{A Boolean algebra is non-trivial if $0\neq 1$, which is equivalent to say that it has at least two elements. As it was done with $\B_2$, the Boolean operation corresponding to each binary connective $\#$ of $\Sigma$ will be also written as $\#$. The  Boolean complement in a Boolean algebra will be denoted by $\sneg$.}

\begin{defi} \label{B-val-def}
	Let \B\ be a Boolean algebra with domain $|\B|$.
	A {\em \B-valuation} for $C_n$ is a function $\mathsf{b}:\bF(\Sigma,\matV) \to |\B|$ satisfying the following clauses:\\[2mm]
	$\begin{array}{ll}
	(V1) & \mathsf{b}(\alpha \# \beta)=\mathsf{b}(\alpha)\,\#\, \mathsf{b}(\beta) \ \mbox{ for } \# \in \{\land,\lor,\to\}; \\[1mm]
	(V2) & \sneg\mathsf{b}(\alpha) \leq \mathsf{b}(\neg\alpha);\\[1mm]
	(V3) & \mathsf{b}(\neg\neg\alpha) \leq  \mathsf{b}(\alpha);\\[1mm]
		(V4)_n & \mathsf{b}(\alpha^n)= \sneg(\mathsf{b}(\alpha^{n-1}) \land \mathsf{b}(\neg (\alpha^{n-1})));\\[1mm]
	(V5) & \mathsf{b}(\neg(\alpha^\circ))=\mathsf{b}(\alpha) \land \mathsf{b}(\neg \alpha);\\[1mm]
	(V6)_n & \mathsf{b}(\alpha^{(n)})\land \mathsf{b}(\beta^{(n)}) \leq  \mathsf{b}((\alpha\#\beta)^{(n)}) \ \mbox{ for } \# \in \{\land,\lor,\to\}.\\[1mm]
\end{array}$
\end{defi}

\noindent The semantical consequence relation  w.r.t. \B-valuations for $C_n$, in which $1$ is the only designated value, will be denoted by $\vDash_{n}^\B$. Thus, $\Gamma \vDash_{n}^\B \varphi$ iff $\mathsf{b}(\varphi)=1$, for any \B-valuation $\mathsf{b}$ for $C_n$ such that $\mathsf{b}(\gamma)=1$ for every $\gamma \in \Gamma$. The semantical consequence with respect to \B-valuations for every Boolean algebra \B\ will be denoted by  $\vDash_{n}$.
Then, $\Gamma \vDash_{n} \varphi$ iff $\Gamma \vDash_{n}^\B \varphi$, for every \B.

\begin{remarks} \label{rem-B-val}
Let $\mathsf{b}$ be a \B-valuation for $C_n$, and let $\alpha$ be a formula. Let $z=(z_{[1]},z_{[2]},\ldots,z_{[n+1]})$ in $\B^{n+1}$ be such that each coordinate is given by $\mathsf{b}(\alpha)$, $\mathsf{b}(\neg\alpha)$, $\mathsf{b}(\alpha^1)$, $\mathsf{b}(\alpha^2)$, \ldots, $\mathsf{b}(\alpha^{n-1})$, respectively.\\[1mm]
(1) By iterating clause $(V5)$ of Definition~\ref{B-val-def}, we obtain the following:
 $$\begin{array}{lllll}
\mathsf{b}(\neg(\alpha^1))&= & \mathsf{b}(\alpha) \land \mathsf{b}(\neg\alpha),\\[2mm]
\mathsf{b}(\neg(\alpha^2))&=& \mathsf{b}(\alpha^1) \land \mathsf{b}(\neg(\alpha^1))&=& \mathsf{b}(\alpha) \land \mathsf{b}(\neg\alpha) \land  \mathsf{b}(\alpha^1),\\[2mm]
\vdots\\[2mm]
\mathsf{b}(\neg(\alpha^{n-1}))&=& \mathsf{b}(\alpha^{n-2}) \land \mathsf{b}(\neg(\alpha^{n-2}))&=& \mathsf{b}(\alpha) \land \mathsf{b}(\neg\alpha) \land  \bigwedge_{i=1}^{n-2}\mathsf{b}(\alpha^{i}).
\end{array}$$
From this it follows that at most one of the coordinates of $z$ can be $0$. Moreover, if $z_{[i]}=0$ then $z_{[k]}=1$ for every $i+1 \leq k \leq n+1$.\\[1mm]
(2) Using clause $(V4)_n$ and  item~(1) it follows that
$$\mathsf{b}(\alpha^n)= \sneg(\mathsf{b}(\alpha^{n-1}) \land \mathsf{b}(\neg (\alpha^{n-1}))) = $$
$$\sneg(\mathsf{b}(\alpha) \land \mathsf{b}(\neg\alpha) \land  \mathsf{b}(\alpha^1) \land \ldots\land  \mathsf{b}(\alpha^{n-2}) \land  \mathsf{b}(\alpha^{n-1})).$$
That is,
$$\sneg\mathsf{b}(\alpha^n) = \mathsf{b}(\alpha) \land \mathsf{b}(\neg\alpha) \land  \mathsf{b}(\alpha^1) \land \ldots\land  \mathsf{b}(\alpha^{n-2}) \land  \mathsf{b}(\alpha^{n-1}).$$
One also sees that $\mathsf{b}(\alpha^{(1)})=\mathsf{b}(\alpha^{1})=z_{[3]}$, $\mathsf{b}(\alpha^{(2)})=\mathsf{b}(\alpha^{(1)})\land\mathsf{b}(\alpha^{2})=\mathsf{b}(\alpha^{1})\land\mathsf{b}(\alpha^{2})=z_{[3]}\land z_{[4]}$ and, inductively, $\mathsf{b}(\alpha^{(n-1)})=\bigwedge_{i=1}^{n-1}\mathsf{b}(\alpha^{i})=\bigwedge_{i=3}^{n+1}z_{[i]}$.\\[1mm]
(3) Let $a=\bigwedge_{i=1}^{n-1} \mathsf{b}(\alpha^i)=\bigwedge_{i=3}^{n+1} z_{[i]}$. From clause~(V1) and items~(1) and~(2) we obtain the following:
$$\mathsf{b}(\alpha^{(n)})=\bigwedge_{i=1}^{n} \mathsf{b}(\alpha^i)= a \land \sneg ((\mathsf{b}(\alpha) \land \mathsf{b}(\neg\alpha)) \land  a)  = \sneg(\mathsf{b}(\alpha) \land \mathsf{b}(\neg\alpha)) \land a.$$
(4) Finally, by clause $(V2)$ and item~(1) we obtain that:\\[2mm]
\indent $\begin{array}{l}
\mathsf{b}(\alpha) \lor \mathsf{b}(\neg\alpha)=1,\\[2mm]
(\mathsf{b}(\alpha) \land \mathsf{b}(\neg\alpha)) \lor \mathsf{b}(\alpha^1) =1,\\[2mm]
\vdots\\[2mm]
(\mathsf{b}(\alpha) \land \mathsf{b}(\neg\alpha) \land \mathsf{b}(\alpha^1) \land \ldots \land \mathsf{b}(\alpha^{n-2})) \lor \mathsf{b}(\alpha^{n-1}) =1.\\[1mm]
\end{array}$
\end{remarks}

\begin{prop} \label{B8-bival}
Let $\mathsf{b}$ be a \B-valuation for $C_n$, and let $\alpha$ be a formula. Then: $\mathsf{b}(\neg (\alpha\#\beta))=\sneg\mathsf{b}(\alpha\#\beta)$ whenever $\mathsf{b}(\neg \alpha)=\sneg\mathsf{b}(\alpha)$ and  $\mathsf{b}(\neg \beta)=\sneg\mathsf{b}(\beta)$,
for $\# \in \{\land,\lor,\to\}$. Also, $\mathsf{b}(\neg\neg \alpha)=\sneg\mathsf{b}(\neg\alpha)$, provided that $\mathsf{b}(\neg \alpha)=\sneg\mathsf{b}(\alpha)$.
\end{prop}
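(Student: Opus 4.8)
\emph{Proof strategy.} The statement packages two implications, of which the one about double negation is elementary and I would dispatch it first, using only clauses $(V2)$ and $(V3)$ of Definition~\ref{B-val-def}. Applying $(V2)$ to the formula $\neg\alpha$ gives $\sneg\mathsf{b}(\neg\alpha)\leq\mathsf{b}(\neg\neg\alpha)$, while $(V3)$ gives $\mathsf{b}(\neg\neg\alpha)\leq\mathsf{b}(\alpha)$; since the hypothesis $\mathsf{b}(\neg\alpha)=\sneg\mathsf{b}(\alpha)$ lets one rewrite $\mathsf{b}(\alpha)=\sneg\sneg\mathsf{b}(\alpha)=\sneg\mathsf{b}(\neg\alpha)$, the two inequalities pinch $\mathsf{b}(\neg\neg\alpha)$ down to exactly $\sneg\mathsf{b}(\neg\alpha)$.

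For the claim about the binary connectives, the plan is to reduce ``$\neg$ behaves classically on a formula $\gamma$ under $\mathsf{b}$'' to a condition that clause $(V6)_n$ is designed to propagate through $\#$. Concretely, I would first establish that, for every \B-valuation $\mathsf{b}$ and every formula $\gamma$,
$$\mathsf{b}(\neg\gamma)=\sneg\mathsf{b}(\gamma)\iff\mathsf{b}(\gamma)\land\mathsf{b}(\neg\gamma)=0\iff\mathsf{b}(\gamma^{(n)})=1.$$
The first equivalence is pure Boolean algebra given $(V2)$: that clause always yields $\sneg\mathsf{b}(\gamma)\leq\mathsf{b}(\neg\gamma)$, and $\mathsf{b}(\gamma)\land\mathsf{b}(\neg\gamma)=0$ is precisely the reverse inequality $\mathsf{b}(\neg\gamma)\leq\sneg\mathsf{b}(\gamma)$. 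For the second equivalence I would invoke the identity of Remarks~\ref{rem-B-val}(3), which (applied to $\gamma$) reads $\mathsf{b}(\gamma^{(n)})=\sneg(\mathsf{b}(\gamma)\land\mathsf{b}(\neg\gamma))\land\bigwedge_{i=1}^{n-1}\mathsf{b}(\gamma^i)$: if $\mathsf{b}(\gamma^{(n)})=1$ then each meetand is $1$, in particular $\sneg(\mathsf{b}(\gamma)\land\mathsf{b}(\neg\gamma))=1$, i.e. $\mathsf{b}(\gamma)\land\mathsf{b}(\neg\gamma)=0$; conversely, if $\mathsf{b}(\gamma)\land\mathsf{b}(\neg\gamma)=0$, the identities of Remarks~\ref{rem-B-val}(4) force $\mathsf{b}(\gamma^i)=1$ for $1\leq i\leq n-1$ (each of those identities has $\mathsf{b}(\gamma)\land\mathsf{b}(\neg\gamma)$ as a conjunct of its left disjunct), and then the identity of Remarks~\ref{rem-B-val}(3) collapses to $\sneg 0\land 1=1$.

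Granting that equivalence, the claim is a one-liner: if $\mathsf{b}(\neg\alpha)=\sneg\mathsf{b}(\alpha)$ and $\mathsf{b}(\neg\beta)=\sneg\mathsf{b}(\beta)$, then $\mathsf{b}(\alpha^{(n)})=\mathsf{b}(\beta^{(n)})=1$, so clause $(V6)_n$ gives $\mathsf{b}((\alpha\#\beta)^{(n)})\geq\mathsf{b}(\alpha^{(n)})\land\mathsf{b}(\beta^{(n)})=1$, whence $\mathsf{b}((\alpha\#\beta)^{(n)})=1$, and the equivalence applied to $\gamma=\alpha\#\beta$ returns $\mathsf{b}(\neg(\alpha\#\beta))=\sneg\mathsf{b}(\alpha\#\beta)$, simultaneously for $\#\in\{\land,\lor,\to\}$.

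The bulk of the work thus sits in the displayed three-way equivalence, after which $(V6)_n$ does everything almost for free, so I do not anticipate a genuine obstacle — only some boundary bookkeeping. The one point needing care is $n=1$: there the auxiliary meets $\bigwedge_{i=1}^{n-1}(\cdot)$ are empty (hence $1$) and Remarks~\ref{rem-B-val}(4) supplies nothing past $\mathsf{b}(\gamma)\lor\mathsf{b}(\neg\gamma)=1$, but the second equivalence still holds because $(V4)_1$ together with $(V1)$ already give $\mathsf{b}(\gamma^{(1)})=\mathsf{b}(\gamma^1)=\sneg(\mathsf{b}(\gamma)\land\mathsf{b}(\neg\gamma))$.
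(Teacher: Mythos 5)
Your proposal is correct and follows essentially the same route as the paper's proof: from the hypotheses you get $\mathsf{b}(\alpha^{(n)})=\mathsf{b}(\beta^{(n)})=1$ via $(V2)$ and Remarks~\ref{rem-B-val}, then $(V6)_n$ yields $\mathsf{b}((\alpha\#\beta)^{(n)})=1$, and the identity of Remarks~\ref{rem-B-val}(3) together with $(V2)$ closes the argument, while the double-negation claim is pinched between $(V2)$ and $(V3)$ exactly as in the paper. Packaging the key step as a three-way equivalence applied in both directions (and checking the $n=1$ boundary) is only a cosmetic reorganization of the same machinery.
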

\begin{proof}
Let $\mathsf{b}$ be a  \B-valuation such that $\mathsf{b}(\neg \alpha)=\sneg\mathsf{b}(\alpha)$ and  $\mathsf{b}(\neg \beta)=\sneg\mathsf{b}(\beta)$. Then $\mathsf{b}(\alpha) \land\mathsf{b}(\neg \alpha)=\mathsf{b}(\beta) \land\mathsf{b}(\neg \beta)=0$ and so $\sneg(\mathsf{b}(\alpha) \land\mathsf{b}(\neg \alpha))=\sneg(\mathsf{b}(\beta) \land\mathsf{b}(\neg \beta))=1$. By Remark~\ref{rem-B-val}(1), $\mathsf{b}(\neg(\alpha^i))=\mathsf{b}(\neg(\beta^i))=0$ and so, by~$(V2)$, $\mathsf{b}(\alpha^i)=\mathsf{b}(\beta^i)=1$ for $1 \leq i \leq n-1$. By Remark~\ref{rem-B-val}(3), $\mathsf{b}(\alpha^{(n)})=\mathsf{b}(\beta^{(n)})=1$ and so, by~$(V6)_n$, $\mathsf{b}((\alpha\#\beta)^{(n)})=1$. But $\mathsf{b}((\alpha\#\beta)^{(n)}) \leq \sneg(\mathsf{b}(\alpha\#\beta) \land \mathsf{b}(\neg(\alpha\#\beta)))$, by Remark~\ref{rem-B-val}(3), hence $\mathsf{b}(\alpha\#\beta) \land \mathsf{b}(\neg(\alpha\#\beta))=0$. That is, $\mathsf{b}(\neg (\alpha\#\beta))=\sneg\mathsf{b}(\alpha\#\beta)$, by~$(V2)$.

Finally, suppose that  $\mathsf{b}(\neg \alpha)=\sneg\mathsf{b}(\alpha)$. By $(V3)$, $\mathsf{b}(\neg\neg \alpha) \land \mathsf{b}(\neg\alpha) \leq \mathsf{b}(\alpha) \land \mathsf{b}(\neg\alpha)=0$. Hence  $\mathsf{b}(\neg\neg \alpha)=\sneg\mathsf{b}(\neg\alpha)$, given that $\mathsf{b}(\neg\neg \alpha) \lor \mathsf{b}(\neg\alpha)=1$ by $(V2)$.
\end{proof}

\begin{coro}
$\B_2$-valuations for $C_n$ coincide with bivaluations for $C_n$.
\end{coro}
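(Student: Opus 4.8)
The plan is to compare, clause by clause, the definition of a $\B_2$-valuation (Definition~\ref{B-val-def} instantiated at $\B=\B_2$) with the standard definition of a $C_n$-bivaluation (\cite{lop:alv:80,ConTol}), exploiting the fact that over the two-element Boolean algebra $\B_2$ all the Boolean data involved in $(V1)$--$(V6)_n$ --- the operations $\land,\lor,\to$, the complement $\sneg$, the order $\leq$, and the constants $0,1$ --- are precisely the classical two-valued ones. Thus, when $\B=\B_2$, the clauses $(V1)$--$(V6)_n$ turn into genuinely truth-functional/conditional conditions on a map $\mathsf{b}:\bF(\Sigma,\matV)\to\{0,1\}$, and what remains is to recognise them as the familiar bivaluation clauses.

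First I would write out the defining clauses of a $C_n$-bivaluation and match them with $(V1)$--$(V6)_n$ read over $\B_2$: the truth-functional clause for $\#\in\{\land,\lor,\to\}$ is $(V1)$; ``$\mathsf{b}(\neg\alpha)=0$ implies $\mathsf{b}(\alpha)=1$'' is $(V2)$ rewritten as $\sneg\mathsf{b}(\alpha)\leq\mathsf{b}(\neg\alpha)$; ``$\mathsf{b}(\neg\neg\alpha)=1$ implies $\mathsf{b}(\alpha)=1$'' is $(V3)$; ``$\mathsf{b}(\alpha^n)=1$ iff $\mathsf{b}(\alpha^{n-1})=0$ or $\mathsf{b}(\neg(\alpha^{n-1}))=0$'' is $(V4)_n$, since on $\B_2$ one has $\sneg(x\land y)=1$ exactly when $x=0$ or $y=0$; ``$\mathsf{b}(\neg(\alpha^\circ))=1$ iff $\mathsf{b}(\alpha)=\mathsf{b}(\neg\alpha)=1$'' is $(V5)$; and the propagation clause ``$\mathsf{b}(\alpha^{(n)})=\mathsf{b}(\beta^{(n)})=1$ implies $\mathsf{b}((\alpha\#\beta)^{(n)})=1$'' is $(V6)_n$. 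This already gives one inclusion: every $C_n$-bivaluation is a $\B_2$-valuation for $C_n$, and conversely every $\B_2$-valuation satisfies all of these clauses.

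The one point that needs care for the remaining inclusion is that the usual definition of a $C_n$-bivaluation also carries the ``classical behaviour propagates'' clauses: if $\mathsf{b}(\neg\alpha)=\sneg\mathsf{b}(\alpha)$ and $\mathsf{b}(\neg\beta)=\sneg\mathsf{b}(\beta)$ then $\mathsf{b}(\neg(\alpha\#\beta))=\sneg\mathsf{b}(\alpha\#\beta)$, and if $\mathsf{b}(\neg\alpha)=\sneg\mathsf{b}(\alpha)$ then $\mathsf{b}(\neg\neg\alpha)=\sneg\mathsf{b}(\neg\alpha)$ --- conditions that do not literally occur among $(V1)$--$(V6)_n$. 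But this is exactly what Proposition~\ref{B8-bival} provides: applied to the Boolean algebra $\B_2$, it shows that every $\B_2$-valuation automatically satisfies those conditions, so adjoining them to $(V1)$--$(V6)_n$ does not change the class of admissible functions. Putting the two inclusions together yields that $\mathsf{b}:\bF(\Sigma,\matV)\to\{0,1\}$ is a $\B_2$-valuation for $C_n$ if and only if it is a $C_n$-bivaluation. The only mildly delicate step is making the clause-by-clause dictionary precise, in particular identifying the classical-propagation clauses of the bivaluation semantics with (the $\B_2$-instance of) Proposition~\ref{B8-bival}; once that is done, no computation is required.
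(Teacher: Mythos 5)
Your proposal is correct and follows essentially the same route as the paper: a clause-by-clause comparison of $(V1)$--$(V6)_n$ over $\B_2$ with the bivaluation clauses, where the only non-immediate point is the classical-propagation clause (the paper's (B8)), which both you and the paper obtain from Proposition~\ref{B8-bival} instantiated at $\B_2$ (using $(V2)$ to pass between $\mathsf{b}(\neg\alpha)\neq\mathsf{b}(\alpha)$ and $\mathsf{b}(\neg\alpha)=\sneg\mathsf{b}(\alpha)$). No substantive difference from the paper's argument.
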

\begin{proof}
The only clause that deserves some attention is~(B8), given that the proof of the validity of the other clauses is immediate. Thus, let $\mathsf{b}$ be a  $\B_2$-valuation such that $\mathsf{b}(\alpha)\neq \mathsf{b}(\neg \alpha)$ and  $\mathsf{b}(\beta)\neq \mathsf{b}(\neg \beta)$. Then, either $\mathsf{b}(\alpha)=0$ or $\mathsf{b}(\neg\alpha)=0$, and the same holds for $\beta$. From this, $\mathsf{b}(\alpha)\land \mathsf{b}(\neg \alpha)=0$ and  $\mathsf{b}(\beta)\land \mathsf{b}(\neg \beta)=0$. This means that $\mathsf{b}(\neg\alpha) = \sneg \mathsf{b}(\alpha)$ and  $\mathsf{b}(\neg\beta) = \sneg\mathsf{b}(\beta)$, by~$(V2)$.  Thus, $\mathsf{b}(\neg (\alpha\#\beta))=\sneg\mathsf{b}(\alpha\#\beta)$, for $\# \in \{\land,\lor,\to\}$, because of Proposition~\ref{B8-bival}. But this is equivalent to $\mathsf{b}(\alpha\#\beta) \neq \mathsf{b}(\neg (\alpha\#\beta))$, for $\# \in \{\land,\lor,\to\}$.
\end{proof}

\

\noindent The proposition above shows that $\B$-valuations generalize bivaluations to arbitrary (non-trivial) Boolean algebras. The following result follows easily from the completeness of $C_n$ with respect to bivaluations:

\begin{theorem} [Soundness and completeness of $C_n$ w.r.t. \B-valuations] \label{comple-Cn-B-val} \ \\
Let \B\ be a Boolean algebra. Fix $n \geq 1$, and let $\Gamma \cup \{\varphi\} \subseteq \bF(\Sigma,\matV)$. Then: $\Gamma \vdash_{C_n} \varphi$ \ iff \ $\Gamma \vDash_{n} \varphi$. 
\end{theorem}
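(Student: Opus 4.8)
The plan is to reduce the statement, in both directions, to the already-established soundness and completeness of $C_n$ with respect to ordinary (two-valued) bivaluations~\cite{lop:alv:80}, exploiting the Corollary above which identifies $\B_2$-valuations for $C_n$ with $C_n$-bivaluations (and noting that $1$ is the designated value in each setting). For the completeness direction, assume $\Gamma \vDash_{n} \varphi$. Since $\vDash_n$ quantifies over \emph{all} Boolean algebras, in particular $\Gamma \vDash_{n}^{\B_2} \varphi$; by the Corollary this is exactly bivaluation consequence, so completeness of $C_n$ with respect to bivaluations gives $\Gamma \vdash_{C_n} \varphi$.

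For the soundness direction, assume $\Gamma \vdash_{C_n} \varphi$, fix a non-trivial Boolean algebra \B\ and a \B-valuation $\mathsf{b}$ with $\mathsf{b}(\gamma)=1$ for all $\gamma\in\Gamma$; we must show $\mathsf{b}(\varphi)=1$. For each ultrafilter $U$ of \B\ let $h_U\colon\B\to\B_2$ be the Boolean homomorphism determined by $U$ (so $h_U(x)=1$ iff $x\in U$). The key observation is that $h_U\circ\mathsf{b}$ is a $\B_2$-valuation for $C_n$: each clause $(V1)$--$(V6)_n$ of Definition~\ref{B-val-def} is an equation or a $\leq$-inequality between Boolean-polynomial expressions in the values $\mathsf{b}$ takes on subformulas and relevant instances, and all such relations are transported along the Boolean homomorphism $h_U$, which preserves $\land,\lor,\to,\sneg,0,1$ and the order. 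Since $h_U(1)=1$ we get $(h_U\circ\mathsf{b})(\gamma)=1$ for all $\gamma\in\Gamma$, and by the Corollary $h_U\circ\mathsf{b}$ is a $C_n$-bivaluation; soundness of $C_n$ with respect to bivaluations then yields $(h_U\circ\mathsf{b})(\varphi)=1$, i.e. $\mathsf{b}(\varphi)\in U$. As $U$ ranges over all ultrafilters of \B\ and $\bigcap\{U : U \text{ ultrafilter of }\B\}=\{1\}$ (equivalently, $\B$ embeds into a power of $\B_2$ by Stone representation), we conclude $\mathsf{b}(\varphi)=1$.

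The completeness half is therefore a one-line reduction once the Corollary is available; the soundness half carries essentially all the work. Its only delicate points are: (i) observing that the defining clauses of a \B-valuation are ``Horn-like'' --- equalities and $\leq$'s between Boolean terms --- hence stable under Boolean homomorphisms, so that an ultrafilter quotient of a \B-valuation is again a bivaluation; and (ii) invoking that $1$ is the meet of all ultrafilters of a Boolean algebra (a form of the Boolean prime ideal theorem). A self-contained alternative for soundness would bypass the quotient argument and instead check directly that every \B-valuation assigns value $1$ to each axiom schema of Definition~\ref{hilCn}: the positive axioms (\kax)--(\axouimp) because they are valid in every Boolean algebra and $\mathsf{b}$ commutes with $\land,\lor,\to$ by $(V1)$; (\axtnd) by $(V2)$; (\axcf) by $(V3)$; (\axexp$_n$) because Remark~\ref{rem-B-val}(3) forces $\mathsf{b}(\alpha^{(n)})\land\mathsf{b}(\alpha)\land\mathsf{b}(\neg\alpha)=0$; and (\axp$_n$) directly by $(V6)_n$; together with the fact that \MP\ preserves value $1$ since $\mathsf{b}(\alpha\to\beta)=\mathsf{b}(\alpha)\to\mathsf{b}(\beta)$. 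This route is correct but more laborious and less in the spirit of ``reducing to bivaluational adequacy'', so I would present the ultrafilter-transfer argument as the main proof.
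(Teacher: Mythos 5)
Your proof is correct, and its completeness half is exactly the paper's: reduce $\vDash_n$ to $\vDash_n^{\B_2}$ and invoke the Corollary identifying $\B_2$-valuations with bivaluations together with bivaluational completeness. The soundness half, however, takes a genuinely different route. The paper argues directly by induction on the length of a derivation: every instance of an axiom of Definition~\ref{hilCn} receives value $1$ under any \B-valuation (this is routine from $(V1)$--$(V6)_n$ and Remarks~\ref{rem-B-val}), and \MP\ preserves value $1$ by $(V1)$; this is essentially the ``self-contained alternative'' you sketch at the end, and your verification of the individual axioms there is accurate. Your main argument instead transfers the problem back to bivaluations via Stone duality: for each ultrafilter $U$ of \B, the composite $h_U\circ\mathsf{b}$ is again a valuation because every clause of Definition~\ref{B-val-def} is an equation or order-inequality between Boolean terms in the values of $\mathsf{b}$, hence is preserved by the Boolean homomorphism $h_U$; then bivaluational \emph{soundness} gives $\mathsf{b}(\varphi)\in U$ for every $U$, and $\bigcap U=\{1\}$ concludes. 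This is a valid argument. What it buys is uniformity: one never re-checks the axioms of $C_n$ against \B-valuations, since all the logical content is delegated to the known two-valued adequacy. What it costs is the Boolean prime ideal theorem (a choice principle the paper's elementary, derivation-length induction does not need) and a dependence on bivaluational soundness in addition to completeness; the paper's route is shorter, choice-free, and arguably more informative, since the direct axiom checks are exactly what motivates the clauses $(V1)$--$(V6)_n$ in the first place.
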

\begin{proof}
`Only if' part (Soundness). It is clear that, if $\varphi$ is  an  instance of an axiom of $C_n$ then $\mathsf{b}(\varphi)=1$, for any \B-valuation $\mathsf{b}$ for $C_n$. On the other hand, by $(V1)$ and the properties of the Boolean implication $a \to b:=\sneg a \lor b$ it follows that $\mathsf{b}(\varphi)=\mathsf{b}(\varphi \to \psi)=\mathsf{b}(\varphi) \to \mathsf{b}(\psi)=1$ implies $\mathsf{b}(\psi)=1$. From this, by induction on the length of a derivation of $\varphi$ from $\Gamma$ in $C_n$ it follows that $\Gamma \vdash_{C_n} \varphi$ implies that  $\Gamma \vDash_{n}^\B \varphi$, for every \B. Therefore, $\Gamma \vDash_{n} \varphi$. \\[2mm]
`If' part (Completeness). Assume that $\Gamma \vDash_{n}^\B \varphi$, for every \B. In particular, $\Gamma \vDash_{n}^{\B_2} \varphi$ and so, by completeness of $C_n$  w.r.t. bivaluations, it follows that $\Gamma \vdash_{C_n} \varphi$.	
\end{proof}

\
\section{Restricted swap structures for $C_{n}$} \label{RS}

In this section, the RNmatrix $\matGM_{C_n} = ( \A_{C_n}, D_n,\matF_{C_n} )$ introduced in Section~\ref{sect-Cn} will be generalized to what we call a {\em restricted swap structures semantics} for $C_n$. To be more precise, the Nmatrix $\matM_{C_n} = ( \A_{C_n}, D_n )$ will be extended, for any \B, to an Nmatrix $\matM_{C_n}^\B = ( \A_{C_n}^\B, D_n^\B )$. The latter is the Nmatrix associated to the swap structure  $\A_{C_n}^\B$ over the Boolean algebra \B, to be defined by extending $\A_{C_n}$ to any \B. After this, the set of valuations  $\matF_{C_n}$ over $\matM_{C_n}$  will be accordingly extended to a set of valuations  $\matF_{C_n}^\B$ over $\matM_{C_n}^\B$, obtaining so a restricted swap structures semantics  $\matGS_{C_n}$ formed by the class of all the RNmatrices  for $C_n$ of the form $\matGM_{C_n}^\B= ( \A_{C_n}^\B, D_n^\B,\matF_{C_n}^\B )$.

As it was done in the previous sections with the structures over ${\bf 2}^{n+1}$, if $z \in \B^{n+1}$ then its $i$th-coordinate will be denoted by $z_{[i]}$.
As in Remarks~\ref{rem-B-val}, consider $(n+1)$-uples $z=(z_{[1]},z_{[2]},\ldots,z_{[n+1]})$ in $\B^{n+1}$ such that each coordinate is given by $\mathsf{b}(\alpha)$, $\mathsf{b}(\neg\alpha)$, $\mathsf{b}(\alpha^1)$, $\mathsf{b}(\alpha^2)$, \ldots, $\mathsf{b}(\alpha^{n-1})$, respectively, for a given  \B-valuation $\mathsf{b}$ for $C_n$ and a given formula $\alpha$. The idea is, from  the properties of \B-valuations analyzed in Remarks~\ref{rem-B-val}, abstracting $\mathsf{b}$ and considering just the $(n+1)$-uples with multioperations between them, reflecting such properties. This lead us to the following definition:

\begin{defi} \label{def-sets-B} The  set of {\em \B-snapshots}  for $C_n$ is given by
$$B_n^\B=\{z \in |\B|^{n+1} \ : \ \big(\bigwedge_{i=1}^k z_{[i]}\big) \lor z_{[k+1]} = 1 \ \mbox{ for every } \ 1 \leq k \leq n\}.$$	
Consider the following subsets of $B_n^\B$:
	\begin{itemize}
		\item[-] $D_n^\B:= \{z \in B_n^\B \ : \ z_{[1]}=1\}$ ({\em designated values});
		\item[-] $Boo_n^\B:= \{z \in B_n^\B \ : \ z_{[1]} \land z_{[2]}=0\}$ ({\em Boolean values}).
	\end{itemize}
\end{defi}

\noindent Observe that $B_n^\B$ is defined according to Remarks~\ref{rem-B-val}(4). The set of designated values is defined according to the intended meaning of the coordinates of the snapshots. In addition, by the proof of Proposition~\ref{B8-bival} it follows that, if $\mathsf{b}(\neg \alpha)=\sneg\mathsf{b}(\alpha)$, then $\mathsf{b}(\alpha^i)=1$ for $1 \leq i \leq n-1$. This is reflected by the fact that $z \in Boo^\B_n$ iff $z=(a,\sneg a,1, \ldots,1)$ for some $a \in |\B|$.

The restrictions imposed to the binary multioperators are justified by Proposition~\ref{B8-bival} in combination with  the intended meaning of the coordinates of the snapshots.
The extension of $\A_{C_n}$ to any Boolean algebra \B\ is then defined as follows:

\begin{defi} \label{defACnB} Let \B\ be a Boolean algebra. The {\em (full) swap structure for $C_n$ over \B} is the multialgebra $\A_{C_n}^\B = ( B_n^\B, \tilde{\land}, \tilde{\lor}, \tilde{\to}, \tilde{\neg})$ over $\Sigma$ defined as follows, for any $z,w \in B_n^\B$:\\
	
	$\begin{array}{llcl}
	(C^{\B,n}_{\tilde{\neg}}) & \tilde{\neg}\, z &=& \{w \in B_n^\B  \ : \ w_{[1]} = z_{[2]} \ \mbox{ and } \ w_{[2]} \leq z_{[1]}\}\\[4mm]
	(C^{\B,n}_{\tilde{\#}}) &  z \,\tilde{\#}\, w &=& \left \{ \begin{tabular}{ll}
	$\{u \in Boo_n^\B  \ : \ u_{[1]} = z_{[1]} \# w_{[1]}\}$ & if $z,w\in Boo_n^\B$,\\[3mm]
	$\{u \in B_n^\B  \ : \ u_{[1]} = z_{[1]} \# w_{[1]}\}$ & otherwise,\\
	\end{tabular}\right.  \\[8mm]
	&&&\mbox{ for } \# \in \{\land, \lor, \to\}.
	\end{array}$
\end{defi}

\begin{remark} \label{exte-B}
Since $0,1 \in \B$ for any \B\ then (up to names) we have that $B_n \subseteq B_n^\B$ for every $n \geq 1$. Moreover, $D_n^\B \cap B_n = D_n$ and $Boo_n^\B \cap B_n  = Boo_n = \{T_n,F_n\}$. Furthermore, it its easy to see that the multioperations of $\A_{C_n}^\B$, when restricted to $B_n$, coincide with that of $\A_{C_n}$. This means that $\A_{C_n}$ is a submultialgebra of $\A_{C_n}^\B$, for every $\B$.
\end{remark}

\noindent  If $z \in Boo_n^\B$ then $z=(a,\sneg a,1,\ldots,1)$ and so $\tilde{\neg}\, z = \{(\sneg a,a,1,\ldots,1)\}$. Indeed, by definition of $\tilde{\neg}$ in  $\A^\B_{C_n}$, $\tilde{\neg}\, z = \{ w \in B_n^\B \ : \ w_{[1]}=\sneg a$ and $w_{[2]} \leq a \}$. But then $w_{[1]} \land w_{[2]} \leq \sneg a \land a =0$. Hence, $w_{[2]}=a$, given that $w_{[1]} \lor w_{[2]}=1$ by definition of $B_n^\B$. That is, $\tilde{\neg}\, z = \{(\sneg a,a,1,\ldots,1)\}$.

\begin{defi} \label{swap-Cn-B} The {\em Nmatrix induced by $\A_{C_n}^\B$} is  $\matM_{C_n}^\B :=( \A_{C_n}^\B, D_n^\B )$.
\end{defi}

\begin{defi} \label{valCnB} Let \B\ be a Boolean algebra. The {\em restricted Nmatrix for $C_n$ over \B} is the RNmatrix $\matGM_{C_n}^\B := ( \A_{C_n}^\B, D_n^\B,\matF_{C_n}^\B )$ obtained from the Nmatrix 	$\matM_{C_n}^\B$ by considering the set  $\matF_{C_n}^\B$ of valuations $\nu$ over $\A_{C_n}^\B$ (that is, homomorphisms of multialgebras $\nu:\bF(\Sigma,\matV) \to \A_{C_n}^\B$) such that, for all formulas $\alpha$ and $\beta$ (recalling that $\alpha^1=\neg(\alpha \land \neg\alpha)$):
	
	$\begin{array}{cl}
	(1) & \nu(\alpha \land \neg\alpha) \in \{z \in \nu(\alpha) \,\tilde{\land}\, \nu(\neg\alpha) \ : \  z_{[2]} = \nu(\alpha)_{[3]}\};\\[1mm]
	(2) & \nu(\alpha^1) = (\nu(\alpha)_{[3]}, \nu(\alpha)_{[1]} \land \nu(\alpha)_{[2]},\nu(\alpha)_{[4]},\ldots,\nu(\alpha)_{[n+1]},\sneg\big(\bigwedge_{i=1}^{n+1} \nu(\alpha)_{[i]} \big));\\[1mm]
	(3) & \nu((\alpha^{(n)} \land \beta^{(n)}) \to (\alpha\# \beta)^{(n)}) \in D_n^B \ \mbox{ for } \# \in \{\land, \lor, \to\}. \\[3mm]
	\end{array}$
\end{defi}

\noindent
It should be clear that $\matGM_{C_n}^\B$ is structural.
As it will be shown in Proposition~\ref{val-nontriv},  it is possible, in general, to define valuations satisfying all these requirements.

\begin{defi} 
The semantical consequence relation  w.r.t. the RNmatrix $\matGM_{C_n}^\B$ will be denoted by $\vDash_{\matGM_{C_n}^\B}^\mathsf{RN}$.		
The {\em restricted swap structures semantics for $C_n$} is the class $\matGS_{C_n}$  formed by the RNmatrices  $\matGM_{C_n}^\B$ such that \B\ is a Boolean algebra. 
The semantical consequence relation  w.r.t. $\matGS_{C_n}$, denoted by $\vDash_{\matGS_{C_n}}^\mathsf{RN}$, is defined as follows: $\Gamma \vDash_{\matGS_{C_n}}^\mathsf{RN} \varphi$ iff $\Gamma \vDash_{\matGM_{C_n}^\B}^\mathsf{RN} \varphi$, for every Boolean algebra \B.
\end{defi}

\begin{remark} \label{obs-val}
Let $\nu(\alpha)=(z_{[1]},z_{[2]},\ldots,z_{[n+1]})$ for a given $\nu \in \matF_{C_n}^\B$ and $\alpha$. Then, by Definition~\ref{valCnB}:\\[3mm]
\indent $\begin{array}{l}
\nu(\alpha \land \neg\alpha)_{[1]}=z_{[1]} \land z_{[2]} \ \mbox{ and } \ \nu(\alpha \land \neg\alpha)_{[2]}=z_{[3]};\\[2mm]
\nu(\alpha^1) =(z_{[3]},z_{[1]} \land z_{[2]},z_{[4]},z_{[5]},\ldots,z_{[n+1]},\sneg\bigwedge_{i=1}^{n+1}z_{[i]});\\[2mm]
\nu(\alpha^1 \land \neg(\alpha^1))_{[1]}=z_{[1]} \land z_{[2]} \land z_{[3]} \ \mbox{ and } \ \nu(\alpha \land \neg\alpha)_{[2]}=z_{[4]};\\[2mm]
\vdots\\[2mm]
\nu(\alpha^{n-2} \land \neg(\alpha^{n-2}))_{[1]}=\bigwedge_{i=1}^{n}z_{[i]} \ \mbox{ and } \ \nu(\alpha^{n-2} \land \neg(\alpha^{n-2}))_{[2]}=z_{[n+1]};\\[2mm]
\nu(\alpha^{n-1}) =(z_{[n+1]},\bigwedge_{i=1}^{n}z_{[i]},\sneg\bigwedge_{i=1}^{n+1}z_{[i]},1,1, \ldots,1);\\[2mm]
\nu(\alpha^{n-1} \land \neg(\alpha^{n-1}))_{[1]}=\bigwedge_{i=1}^{n+1}z_{[i]} \ \mbox{ and } \\[2mm]
\multicolumn{1}{r}{\nu(\alpha^{n-1} \land \neg(\alpha^{n-1}))_{[2]}=\sneg\bigwedge_{i=1}^{n+1}z_{[i]};}\\[2mm]
\nu(\alpha^{n}) =(\sneg\bigwedge_{i=1}^{n+1}z_{[i]},\bigwedge_{i=1}^{n+1}z_{[i]},1,1, \ldots,1);\\[2mm]
\nu(\alpha^{(n)})_{[1]} = \bigwedge_{i=3}^{n+1}z_{[i]} \land \sneg\bigwedge_{i=1}^{n+1}z_{[i]} = \big(\bigwedge_{i=3}^{n+1}z_{[i]}\big) \land\sneg(z_{[1]} \land z_{[2]}).\\[2mm]
\end{array}$
\end{remark}

\noindent 
Recall from Subsection~\ref{sect-Cn} the strong negation $\sneg \alpha: = \neg \alpha \land  \alpha^{(n)}$ definable in $C_n$. The previous Remark~\ref{obs-val} allows to show the following:

\begin{prop}\label{strong_negation}
 For every $\nu \in \matF_{C_n}^\B$ and every formula $\alpha$:\\[1mm]
(1) $\nu(\alpha \land \sneg\alpha) = F_n$;\\[1mm]
(2) $\nu(\alpha \lor \sneg\alpha) \in D_n^\B$.
\end{prop}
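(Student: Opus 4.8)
The plan is to reduce both claims to a computation of first coordinates in $\A_{C_n}^\B$. Two facts about any homomorphism $\nu\colon \bF(\Sigma,\matV) \to \A_{C_n}^\B$ carry the argument: from $(C^{\B,n}_{\tilde{\#}})$ one gets $\nu(\gamma \# \delta)_{[1]} = \nu(\gamma)_{[1]} \# \nu(\delta)_{[1]}$ for $\# \in \{\land,\lor,\to\}$, and from $(C^{\B,n}_{\tilde{\neg}})$ one gets $\nu(\neg\gamma)_{[1]} = \nu(\gamma)_{[2]}$. Fixing $\nu \in \matF_{C_n}^\B$ and a formula $\alpha$, and writing $z = \nu(\alpha)$, I would first compute the first coordinate of $\nu(\sneg\alpha)$, where $\sneg\alpha = \neg\alpha \land \alpha^{(n)}$. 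By the two facts above together with Remark~\ref{obs-val} (which gives $\nu(\alpha^{(n)})_{[1]} = \big(\bigwedge_{i=3}^{n+1} z_{[i]}\big) \land \sneg(z_{[1]} \land z_{[2]})$), one obtains
$$\nu(\sneg\alpha)_{[1]} = \nu(\neg\alpha)_{[1]} \land \nu(\alpha^{(n)})_{[1]} = z_{[2]} \land \Big(\bigwedge_{i=3}^{n+1} z_{[i]}\Big) \land \sneg(z_{[1]} \land z_{[2]}).$$

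For item~(1): $\nu(\alpha \land \sneg\alpha)_{[1]} = z_{[1]} \land \nu(\sneg\alpha)_{[1]}$ has among its meet factors both $z_{[1]} \land z_{[2]}$ and its Boolean complement $\sneg(z_{[1]} \land z_{[2]})$, so it equals $0$. It then remains to observe that the only element of $B_n^\B$ with first coordinate $0$ is $F_n = (0,1,\ldots,1)$: reading the defining condition $\big(\bigwedge_{i=1}^k u_{[i]}\big) \lor u_{[k+1]} = 1$ of $B_n^\B$ inductively on $k$ forces $u_{[2]} = 1$, then $u_{[3]} = 1$, and so on. Since $\nu(\alpha \land \sneg\alpha) \in B_n^\B$, this gives $\nu(\alpha \land \sneg\alpha) = F_n$.

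For item~(2): $\nu(\alpha \lor \sneg\alpha)_{[1]} = z_{[1]} \lor \nu(\sneg\alpha)_{[1]}$, and distributing the join over the meet turns this into $(z_{[1]} \lor z_{[2]}) \land \big(\bigwedge_{i=3}^{n+1}(z_{[1]} \lor z_{[i]})\big) \land \big(z_{[1]} \lor \sneg(z_{[1]} \land z_{[2]})\big)$. The last factor is $1$ because $z_{[1]} \lor \sneg z_{[1]} = 1$; the factor $z_{[1]} \lor z_{[2]}$ is $1$ by the $k = 1$ instance of the defining condition of $B_n^\B$; and for $3 \leq i \leq n+1$ one has $z_{[1]} \lor z_{[i]} = 1$ since $\bigwedge_{j=1}^{i-1} z_{[j]} \leq z_{[1]}$ and $\big(\bigwedge_{j=1}^{i-1} z_{[j]}\big) \lor z_{[i]} = 1$. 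Hence $\nu(\alpha \lor \sneg\alpha)_{[1]} = 1$, i.e. $\nu(\alpha \lor \sneg\alpha) \in D_n^\B$. I do not expect a real obstacle here: the proof is coordinate bookkeeping plus elementary Boolean identities, the only delicate points being the correct use of Remark~\ref{obs-val} for $\nu(\alpha^{(n)})_{[1]}$ and the two elementary ``shape'' observations about $B_n^\B$ invoked above.
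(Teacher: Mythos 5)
Your proposal is correct and follows essentially the same route as the paper's proof: compute $\nu(\sneg\alpha)_{[1]}$ from Remark~\ref{obs-val}, then verify items (1) and (2) by elementary Boolean identities together with the defining conditions of $B_n^\B$. The only (harmless) differences are bookkeeping: you make explicit that $F_n$ is the unique snapshot with first coordinate $0$ (which the paper leaves implicit), and in item (2) you distribute before simplifying $z_{[2]}\land\sneg(z_{[1]}\land z_{[2]})$, whereas the paper first reduces it to $z_{[2]}\land\sneg z_{[1]}$ and shows $\sneg\bigwedge_{i=3}^{n+1}z_{[i]}\leq z_{[1]}$.
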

\begin{proof}
We will adopt the notation $\nu(\alpha)=(z_{[1]},z_{[2]},\ldots,z_{[n+1]})$ of Remark~\ref{obs-val}. In addition, let $a:=\bigwedge_{i=3}^{n+1} z_{[i]}$.\\[1mm]
(1) Observe that $\nu(\neg\alpha)_{[1]}=z_{[2]}$, and $\nu(\alpha^{(n)})_{[1]} = \sneg(z_{[1]} \land z_{[2]}) \land a$, by Remark~\ref{obs-val}. Then, $\nu(\sneg \alpha)_{[1]}=\nu(\neg \alpha \land  \alpha^{(n)})_{[1]} = z_{[2]} \land\sneg(z_{[1]} \land z_{[2]}) \land a= z_{[2]} \land(\sneg z_{[1]} \lor \sneg z_{[2]}) \land a=z_{[2]} \land \sneg z_{[1]} \land a$. From this, it follows that $\nu(\alpha \land \sneg\alpha)_{[1]} = z_{[1]} \land (z_{[2]} \land \sneg z_{[1]} \land a)=0$. That is, $\nu(\alpha \land \sneg\alpha) = F_n$.\\[1mm]
(2) Since $\nu(\alpha) \in B_n^\B$, $z_{[1]} \lor z_{[2]} = 1$. In addition, $(z_{[1]} \land z_{[2]}) \lor z_{[3]} =1$, hence $\sneg z_{[3]} \leq z_{[1]} \land z_{[2]} \leq z_{[1]}$. Analogously, $(z_{[1]} \land z_{[2]} \land z_{[3]}) \lor z_{[4]} =1$, hence $\sneg z_{[4]} \leq z_{[1]} \land z_{[2]} \land z_{[3]} \leq z_{[1]}$. In the same way we can prove that $\sneg z_{[i]} \leq z_{[1]}$ for $3 \leq i \leq n+1$ and so $\sneg a = \sneg \bigwedge_{i=3}^{n+1} z_{[i]} = \bigvee_{i=3}^{n+1} \sneg z_{[i]} \leq z_{[1]}$. Therefore $\nu(\alpha \lor \sneg\alpha)_{[1]} = z_{[1]} \lor (z_{[2]} \land \sneg z_{[1]} \land a) = (z_{[1]} \lor z_{[2]}) \land (z_{[1]} \lor \sneg z_{[1]}) \land (z_{[1]} \lor a) = 1$. That is,  $\nu(\alpha \lor \sneg\alpha) \in D_n^\B$.
\end{proof}

The following can be easily proved by induction.

\begin{lemma} \label{prop-val}
For $2\leq k\leq n-2$, we have that, for a homomorphism $\nu \in\mathcal{F}_{C_{n}}^{\mathcal{B}}$,
\[ \nu(\alpha^{k})=( \nu(\alpha)_{[k+2]}, \bigwedge_{i=1}^{k+1} \nu(\alpha)_{[i]},  \nu(\alpha)_{[k+3]}, \ldots,   \nu(\alpha)_{[n+1]}, \sneg \bigwedge_{i=1}^{n+1} \nu(\alpha)_{[i]}, 1, \ldots  , 1).\]
This way, 
\[\nu(\alpha^{n-1})=( \nu(\alpha)_{[n+1]}, \bigwedge_{i=1}^{n} \nu(\alpha)_{[i]}, \sneg \bigwedge_{i=1}^{n+1} \nu(\alpha)_{[i]}, 1, \ldots  , 1)\]
and $ \nu(\alpha^{n})=(\sneg \bigwedge_{i=1}^{n+1} \nu(\alpha)_{[i]}, \bigwedge_{i=1}^{n+1} \nu(\alpha)_{[i]}, 1, \ldots  , 1)$.
\end{lemma}

\begin{lemma} \label{lem-sound-Cn-B}
	Let $\nu$ be a valuation in $\matF_{C_n}^\B$. Then, the mapping $\mathsf{b}:\bF(\Sigma,\matV) \to |\B|$ given by $\mathsf{b}(\alpha):=\nu(\alpha)_{[1]}$ is a \B-valuation for $C_n$ such that  $\mathsf{b}(\alpha)=1$ iff $\nu(\alpha) \in D_n^\B$  for every formula $\alpha$. 
\end{lemma}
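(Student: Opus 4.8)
The plan is to verify each defining clause $(V1)$–$(V6)_n$ of a \B-valuation (Definition~\ref{B-val-def}) for the map $\mathsf{b}(\alpha):=\nu(\alpha)_{[1]}$, using that $\nu$ is a homomorphism of multialgebras into $\A_{C_n}^\B$ satisfying the three extra conditions of Definition~\ref{valCnB}, together with the explicit coordinate descriptions collected in Remark~\ref{obs-val} and Lemma~\ref{prop-val}. The final ``iff'' claim is immediate: by definition $D_n^\B=\{z\in B_n^\B : z_{[1]}=1\}$, so $\nu(\alpha)\in D_n^\B$ exactly when $\nu(\alpha)_{[1]}=1$, i.e.\ when $\mathsf{b}(\alpha)=1$.

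First I would handle $(V1)$: for $\#\in\{\land,\lor,\to\}$, since $\nu$ is a homomorphism, $\nu(\alpha\#\beta)\in \nu(\alpha)\,\tilde{\#}\,\nu(\beta)$, and by clause $(C^{\B,n}_{\tilde{\#}})$ every element $u$ of that set has $u_{[1]}=\nu(\alpha)_{[1]}\,\#\,\nu(\beta)_{[1]}$; reading off first coordinates gives $\mathsf{b}(\alpha\#\beta)=\mathsf{b}(\alpha)\,\#\,\mathsf{b}(\beta)$. For $(V2)$: $\nu(\neg\alpha)\in\tilde{\neg}\,\nu(\alpha)$, so $\nu(\neg\alpha)_{[1]}=\nu(\alpha)_{[2]}$; but $\nu(\alpha)\in B_n^\B$ forces $\nu(\alpha)_{[1]}\lor\nu(\alpha)_{[2]}=1$ (the $k=1$ instance in Definition~\ref{def-sets-B}), hence $\sneg\nu(\alpha)_{[1]}\leq\nu(\alpha)_{[2]}=\nu(\neg\alpha)_{[1]}$, i.e.\ $\sneg\mathsf{b}(\alpha)\leq\mathsf{b}(\neg\alpha)$. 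For $(V3)$: applying the negation clause twice, $\nu(\neg\neg\alpha)_{[1]}=\nu(\neg\alpha)_{[2]}\leq\nu(\alpha)_{[1]}$, which is exactly $\mathsf{b}(\neg\neg\alpha)\leq\mathsf{b}(\alpha)$.

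Next, $(V5)$ requires $\mathsf{b}(\neg(\alpha^1))=\mathsf{b}(\alpha)\land\mathsf{b}(\neg\alpha)$. Here condition~(1) of Definition~\ref{valCnB} says $\nu(\alpha\land\neg\alpha)\in\nu(\alpha)\,\tilde{\land}\,\nu(\neg\alpha)$, so $\nu(\alpha\land\neg\alpha)_{[1]}=\nu(\alpha)_{[1]}\land\nu(\neg\alpha)_{[1]}=\nu(\alpha)_{[1]}\land\nu(\alpha)_{[2]}$; since $\alpha^1=\neg(\alpha\land\neg\alpha)$, the negation clause gives $\nu(\alpha^1)_{[1]}=\nu(\alpha\land\neg\alpha)_{[2]}$, and condition~(1) pins $\nu(\alpha\land\neg\alpha)_{[2]}=\nu(\alpha)_{[3]}$. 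Hmm — actually $(V5)$ is $\mathsf{b}(\neg(\alpha^\circ))=\mathsf{b}(\alpha)\land\mathsf{b}(\neg\alpha)$; iterating via $\alpha^{\circ}=\alpha^1$, one gets $\neg(\alpha^1)=\alpha\land\neg\alpha$ up to the multialgebra semantics, so $\nu(\neg(\alpha^1))_{[1]}$ must equal $\nu(\alpha\land\neg\alpha)_{[1]}=\nu(\alpha)_{[1]}\land\nu(\alpha)_{[2]}$; this needs the negation clause applied to $\nu(\alpha^1)$ together with the displayed value of $\nu(\alpha^1)$ from condition~(2) of Definition~\ref{valCnB}. So $(V5)$ follows by computing $\nu(\neg(\alpha^1))_{[1]}=\nu(\alpha^1)_{[2]}=\nu(\alpha)_{[1]}\land\nu(\alpha)_{[2]}$, reading the second coordinate of $\nu(\alpha^1)$ off condition~(2). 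Clause $(V4)_n$ — $\mathsf{b}(\alpha^n)=\sneg(\mathsf{b}(\alpha^{n-1})\land\mathsf{b}(\neg(\alpha^{n-1})))$ — is read off directly from the last line of Lemma~\ref{prop-val}, which gives $\nu(\alpha^n)_{[1]}=\sneg\bigwedge_{i=1}^{n+1}\nu(\alpha)_{[i]}$, together with $\nu(\alpha^{n-1})_{[1]}=\nu(\alpha)_{[n+1]}$ and $\nu(\alpha^{n-1})_{[2]}=\bigwedge_{i=1}^{n}\nu(\alpha)_{[i]}$ from the same lemma, whose conjunction is indeed $\bigwedge_{i=1}^{n+1}\nu(\alpha)_{[i]}$; and $\mathsf{b}(\neg(\alpha^{n-1}))=\nu(\neg(\alpha^{n-1}))_{[1]}=\nu(\alpha^{n-1})_{[2]}$ by the negation clause. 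Finally $(V6)_n$: using Remark~\ref{obs-val}, $\mathsf{b}(\alpha^{(n)})=\nu(\alpha^{(n)})_{[1]}=\bigl(\bigwedge_{i=3}^{n+1}\nu(\alpha)_{[i]}\bigr)\land\sneg(\nu(\alpha)_{[1]}\land\nu(\alpha)_{[2]})$, and likewise for $\beta$ and $\alpha\#\beta$; condition~(3) of Definition~\ref{valCnB} says $\nu\bigl((\alpha^{(n)}\land\beta^{(n)})\to(\alpha\#\beta)^{(n)}\bigr)\in D_n^\B$, whose first coordinate, by $(V1)$ already proved, is $\mathsf{b}(\alpha^{(n)}\land\beta^{(n)})\to\mathsf{b}((\alpha\#\beta)^{(n)})$ in \B; being $=1$ gives $\mathsf{b}(\alpha^{(n)})\land\mathsf{b}(\beta^{(n)})\leq\mathsf{b}((\alpha\#\beta)^{(n)})$, as required.

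The main obstacle is purely bookkeeping: one must be careful that the abbreviations $\alpha^k$, $\alpha^{(k)}$, and $\neg(\alpha^k)$ expand correctly and that the coordinate formulas of Remark~\ref{obs-val} and Lemma~\ref{prop-val} are applied to the right subformulas — in particular that condition~(2) of Definition~\ref{valCnB} (the exact value of $\nu(\alpha^1)$) is what transmits the shift of coordinates needed in $(V4)_n$ and $(V5)$. No genuinely hard step arises; every clause reduces to reading a first or second coordinate and invoking the Boolean identities $a\lor\sneg a=1$, De Morgan, and absorption.
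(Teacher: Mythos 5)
Your proposal is correct and follows essentially the same route as the paper's proof: each clause $(V1)$--$(V6)_n$ is verified by reading off first and second coordinates via the homomorphism clauses, the three conditions of Definition~\ref{valCnB}, Remark~\ref{obs-val} and Lemma~\ref{prop-val}, and the final ``iff'' is immediate from the definition of $D_n^\B$. The brief detour in your $(V5)$ discussion (treating $\neg(\alpha^1)$ as $\alpha\land\neg\alpha$) is harmless, since you land on the correct computation $\nu(\neg(\alpha^1))_{[1]}=\nu(\alpha^1)_{[2]}=\nu(\alpha)_{[1]}\land\nu(\alpha)_{[2]}$, which is exactly what the paper does.
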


\begin{proof}
Clause $(V1)$ is quite obvious: since $\nu$ is a homomorphism, $\nu(\alpha\#\beta)$ is in $\nu(\alpha)\,\tilde{\#}\, \nu(\beta)$, for any $\#\in\{\lor, \land, \to\}$. Given that $u\in z\,\tilde{\#}\,w$ if, and only if, $u_{[1]}=z_{[1]}\# w_{[1]}$, we obtain that $\nu(\alpha\#\beta)_{[1]}= \nu(\alpha)_{[1]}\# \nu(\beta)_{[1]}$ and therefore $\mathsf{b}(\alpha\#\beta)=\mathsf{b}(\alpha)\#\mathsf{b}(\beta)$. It holds that $\mathsf{b}(\neg\alpha)= \nu(\neg\alpha)_{[1]}$, and $\nu(\neg\alpha)\in \tilde{\neg} \, \nu(\alpha)$, since $\nu$ is a homomorphism. But $w\in \tilde{\neg}\,z$ if, and only if $w_{[1]}=z_{[2]}$ and $w_{[2]}\leq z_{[1]}$, hence $ \nu(\neg\alpha)_{[1]}= \nu(\alpha)_{[2]}$. From the definition of $B_{n}^{\mathcal{B}}$, $\nu(\alpha)_{[1]}\lor\nu(\alpha)_{[2]}=1$, implying that $\nu(\alpha)_{[2]}\geq \sneg  \nu(\alpha)_{[1]}$. This means that $\mathsf{b}(\neg\alpha)\geq{\sim}\mathsf{b}(\alpha)$, which corresponds to clause $(V2)$. Again from the definition of $\tilde{\neg}$, $\nu(\neg\neg\alpha)_{[1]}= \nu(\neg\alpha)_{[2]}$, and $\nu(\neg\alpha)_{[2]}\leq  \nu(\alpha)_{[1]}$, meaning that $\mathsf{b}(\neg\neg\alpha)= \nu(\neg\neg\alpha)_{[1]}\leq  \nu(\alpha)_{[1]}=\mathsf{b}(\alpha)$. This corresponds to clause $(V3)$.

From Lemma~\ref{prop-val}, we have that $\nu(\alpha^{n})_{[1]}=\sneg \bigwedge_{i=1}^{n+1} \nu(\alpha)_{[i]}$, $\nu(\alpha^{n-1})_{[1]}= \nu(\alpha)_{[n+1]}$ and $\nu(\neg(\alpha^{n-1}))_{[1]}= \nu(\alpha^{n-1})_{[2]}=\bigwedge_{i=1}^{n} \nu(\alpha)_{[i]}$, meaning therefore that 
\[\mathsf{b}(\alpha^{n})=\sneg \bigwedge_{i=1}^{n+1} \nu(\alpha)_{[i]}=\sneg (\nu(\alpha)_{[n+1]}\land \bigwedge_{i=1}^{n} \nu(\alpha)_{[i]})=\sneg (\mathsf{b}(\alpha^{n-1})\land\mathsf{b}(\neg(\alpha^{n-1}))),\]
that is, clause $(V4)_{n}$ is satisfied. We have that $\mathsf{b}(\neg(\alpha^{\circ}))=\nu(\neg(\alpha^{1}))_{[1]}= \nu(\alpha^{1})_{[2]}= \nu(\alpha)_{[1]}\land\nu(\alpha)_{[2]}= \nu(\alpha)_{[1]}\land\nu(\neg\alpha)_{[1]}=\mathsf{b}(\alpha)\land\mathsf{b}(\neg\alpha)$, what validates clause $(V5)$. From the definition of $\mathcal{F}_{C_{n}}^{\mathcal{B}}$  it holds that, for any $\#\in\{\lor, \land, \to\}$, $\nu((\alpha^{(n)}\land\beta^{(n)})\to(\alpha\#\beta)^{(n)})\in D_{n}^{\mathcal{B}}$, meaning that $(\nu(\alpha^{(n)})_{[1]}\land\nu(\beta^{(n)})_{[1]})\to\nu((\alpha\#\beta)^{(n)})_{[1]}= \nu((\alpha^{(n)}\land\beta^{(n)})\to(\alpha\#\beta)^{(n)})_{[1]}=1$. This implies that $\mathsf{b}(\alpha^{(n)})\land\mathsf{b}(\beta^{(n)})= \nu(\alpha^{(n)})_{[1]}\land\nu(\beta^{(n)})_{[1]}\leq  \nu((\alpha\#\beta)^{(n)})_{[1]}=\mathsf{b}((\alpha\#\beta)^{(n)})$, showing that condition $(V6)_{n}$ is also validated.

Clearly, $\mathsf{b}(\alpha)=1$ if, and only if, $\nu(\alpha)_{[1]}=1$, which is equivalent to $\nu(\alpha)\in D_{n}^{\mathcal{B}}$.
\end{proof}

\begin{lemma} \label{lem-compl-Cn-B}
	For any \B-valuation  $\mathsf{b}$ for $C_n$, the mapping $\nu:\bF(\Sigma,\matV) \to B_n^\B$ given by $\nu(\alpha):=(\mathsf{b}(\alpha),\mathsf{b}(\neg\alpha),\mathsf{b}(\alpha^1),\mathsf{b}(\alpha^2),\ldots,\mathsf{b}(\alpha^{n-1}))$  is  a valuation in $\matF_{C_n}^\B$ such that  $\mathsf{b}(\alpha)=1$ iff $\nu(\alpha) \in D_n^\B$ for every formula $\alpha$.
\end{lemma}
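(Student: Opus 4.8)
The plan is to verify directly the three defining requirements for a map to belong to $\matF_{C_n}^\B$, and then to read off the final equivalence. First I would check that $\nu$ really lands in $B_n^\B$: $\nu(\alpha)$ is precisely the tuple $z=(\mathsf{b}(\alpha),\mathsf{b}(\neg\alpha),\mathsf{b}(\alpha^1),\ldots,\mathsf{b}(\alpha^{n-1}))$ analyzed in Remarks~\ref{rem-B-val}, and the chain of identities in Remarks~\ref{rem-B-val}(4) is exactly the statement that $\big(\bigwedge_{i=1}^{k}z_{[i]}\big)\lor z_{[k+1]}=1$ for $1\le k\le n$, i.e. $z\in B_n^\B$. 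Since $\nu(\alpha)_{[1]}=\mathsf{b}(\alpha)$ and membership in $D_n^\B$ amounts to the first coordinate being $1$, the clause ``$\mathsf{b}(\alpha)=1$ iff $\nu(\alpha)\in D_n^\B$'' is then immediate once everything else has been checked.

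Next I would verify that $\nu$ is a homomorphism of multialgebras. For $\#\in\{\land,\lor,\to\}$, clause $(V1)$ gives $\nu(\alpha\#\beta)_{[1]}=\mathsf{b}(\alpha)\#\mathsf{b}(\beta)=\nu(\alpha)_{[1]}\#\nu(\beta)_{[1]}$, which is exactly the coordinate condition in $(C^{\B,n}_{\tilde{\#}})$; together with $\nu(\alpha\#\beta)\in B_n^\B$ this settles the ``otherwise'' branch. In the branch where both $\nu(\alpha),\nu(\beta)\in Boo_n^\B$ I must also see $\nu(\alpha\#\beta)\in Boo_n^\B$: from $\mathsf{b}(\alpha)\land\mathsf{b}(\neg\alpha)=0$, together with $(V2)$ and $\mathsf{b}(\alpha)\lor\mathsf{b}(\neg\alpha)=1$, one gets $\mathsf{b}(\neg\alpha)=\sneg\mathsf{b}(\alpha)$, and likewise $\mathsf{b}(\neg\beta)=\sneg\mathsf{b}(\beta)$; then Proposition~\ref{B8-bival} gives $\mathsf{b}(\neg(\alpha\#\beta))=\sneg\mathsf{b}(\alpha\#\beta)$, hence $\nu(\alpha\#\beta)_{[1]}\land\nu(\alpha\#\beta)_{[2]}=0$. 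For negation, $(V2)$ gives $\nu(\neg\alpha)_{[1]}=\mathsf{b}(\neg\alpha)=\nu(\alpha)_{[2]}$ and $(V3)$ gives $\nu(\neg\alpha)_{[2]}=\mathsf{b}(\neg\neg\alpha)\le\mathsf{b}(\alpha)=\nu(\alpha)_{[1]}$, so $\nu(\neg\alpha)\in\tilde{\neg}\,\nu(\alpha)$.

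Then come clauses (1)--(3) of Definition~\ref{valCnB}. Clause (1) holds because $\alpha^1=\neg(\alpha\land\neg\alpha)$ forces $\nu(\alpha\land\neg\alpha)_{[2]}=\mathsf{b}(\alpha^1)=\nu(\alpha)_{[3]}$, while $\nu(\alpha\land\neg\alpha)\in\nu(\alpha)\,\tilde{\land}\,\nu(\neg\alpha)$ by the homomorphism property. For clause (2) I would compute $\nu(\alpha^1)=(\mathsf{b}(\alpha^1),\mathsf{b}(\neg(\alpha^1)),\mathsf{b}((\alpha^1)^1),\ldots,\mathsf{b}((\alpha^1)^{n-1}))$ coordinatewise, using the identity $(\alpha^1)^k=\alpha^{k+1}$ (an immediate induction from $\beta^{m+1}:=\neg(\beta^m\land\neg(\beta^m))$), clause $(V5)$ for $\mathsf{b}(\neg(\alpha^1))=\mathsf{b}(\alpha)\land\mathsf{b}(\neg\alpha)$, and Remarks~\ref{rem-B-val}(2) for $\mathsf{b}(\alpha^n)=\sneg\big(\bigwedge_{i=1}^{n+1}\nu(\alpha)_{[i]}\big)$; the resulting tuple is exactly the one prescribed. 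Clause (3) follows from $(V1)$ and $(V6)_n$: the first coordinate of $\nu\big((\alpha^{(n)}\land\beta^{(n)})\to(\alpha\#\beta)^{(n)}\big)$ equals $\big(\mathsf{b}(\alpha^{(n)})\land\mathsf{b}(\beta^{(n)})\big)\to\mathsf{b}((\alpha\#\beta)^{(n)})=1$, so this value lies in $D_n^\B$. This completes $\nu\in\matF_{C_n}^\B$, and the stated equivalence was already observed.

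No step is conceptually hard; the proof is pure unwinding, with Remarks~\ref{rem-B-val} and Proposition~\ref{B8-bival} supplying all the real content. The parts that demand care rather than routine are clause (2), where the bookkeeping with $(\alpha^1)^k=\alpha^{k+1}$ and the iterated expressions for $\mathsf{b}(\neg(\alpha^j))$ must be matched precisely against the prescribed tuple, and the ``both values Boolean'' subcase of the homomorphism check, which is the one place where Proposition~\ref{B8-bival} is genuinely needed.
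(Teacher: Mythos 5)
Your proposal is correct and follows essentially the same route as the paper's proof: homomorphism via $(V1)$, $(V3)$ and Proposition~\ref{B8-bival} for the Boolean-values subcase, then clauses (1)--(3) of Definition~\ref{valCnB} via $(V5)$, the identity $(\alpha^1)^k=\alpha^{k+1}$, $(V4)_n$ (equivalently Remarks~\ref{rem-B-val}(2)) and $(V6)_n$, with the final equivalence read off the first coordinate. The only nitpick is that the identity $\nu(\neg\alpha)_{[1]}=\mathsf{b}(\neg\alpha)=\nu(\alpha)_{[2]}$ is purely by definition of $\nu$ rather than by $(V2)$ (whose real role is ensuring $\nu(\alpha)\in B_n^\B$, which you handle separately via Remarks~\ref{rem-B-val}(4)); this is a harmless misattribution, not a gap.
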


\begin{proof}
First of all, we prove that $\nu$ is a homomorphism. By definition of $\nu$, $\nu(\neg\alpha)_{[1]}=\mathsf{b}(\neg\alpha)= \nu(\alpha)_{[2]}$ and, by $(V3)$, $\nu(\neg\alpha)_{[2]}=\mathsf{b}(\neg\neg\alpha)\leq \mathsf{b}(\alpha)= \nu(\alpha)_{[1]}$, proving that $\nu(\neg\alpha)\in \tilde{\neg}\, \nu(\alpha)$. For $\#\in\{\lor, \land, \to\}$, from condition $(V1)$ one gets that $\nu(\alpha\#\beta)_{[1]}=\mathsf{b}(\alpha\#\beta)=\mathsf{b}(\alpha)\#\mathsf{b}(\beta)= \nu(\alpha)_{[1]}\# \nu(\beta)_{[1]}$. Furthermore, $\nu(\alpha),  \nu(\beta)\in Boo_{n}^{\mathcal{B}}$ if and only if $\nu(\alpha)_{[1]}=\sneg  \nu(\alpha)_{[2]}$ and $\nu(\beta)_{[1]}=\sneg  \nu(\beta)_{[2]}$, or equivalently, $\mathsf{b}(\neg\alpha)=\sneg \mathsf{b}(\alpha)$ and $\mathsf{b}(\neg\beta)=\sneg \mathsf{b}(\beta)$. From Proposition~\ref{B8-bival}, this implies that $\mathsf{b}(\neg(\alpha\#\beta))=\sneg \mathsf{b}(\alpha\#\beta)$, that is, $\nu(\alpha\#\beta)\in Boo_{n}^{\mathcal{B}}$. With all of this, we find that, regardless of the values of $\nu(\alpha)$ and $\nu(\beta)$, $\nu(\alpha\#\beta)\in  \nu(\alpha)\,\tilde{\#}\, v(\beta)$.

Now, we need only to prove that $\nu$ is in $\mathcal{F}_{C_{n}}^{\mathcal{B}}$. From the fact that $\nu$ is a homomorphism, $\nu(\alpha\wedge\neg\alpha)\in  \nu(\alpha)\,\tilde{\land}\, \nu(\neg\alpha)$. Moreover, $\nu(\alpha\wedge\neg\alpha)_{[2]}=\mathsf{b}(\neg(\alpha\wedge\neg\alpha))=\mathsf{b}(\alpha^{1})= \nu(\alpha)_{[3]}$, what proves the first condition for being in $\mathcal{F}_{C_{n}}^{\mathcal{B}}$. From the definition of $\nu$, $\nu(\alpha^{1})_{[1]}=\mathsf{b}(\alpha^{1})= \nu(\alpha)_{[3]}$. From property $(V5)$, $\nu(\alpha^{1})_{[2]}=\mathsf{b}(\neg(\alpha^{1}))=\mathsf{b}(\alpha)\land\mathsf{b}(\neg\alpha)= \nu(\alpha)_{[1]}\land\nu(\alpha)_{[2]}$. For $3\leq k\leq n$, $\nu(\alpha^{1})_{[k]}=\mathsf{b}((\alpha^{1})^{k-2})=\mathsf{b}(\alpha^{k-1})= \nu(\alpha)_{[k+1]}$. Finally, we have from $(V4)_{n}$ that $ \nu(\alpha^{1})_{[n+1]}=\mathsf{b}(\alpha^{n})=\sneg (\mathsf{b}(\alpha^{n-1})\land\mathsf{b}(\neg(\alpha^{n-1})))$. From $(V5)$, $\mathsf{b}(\neg(\alpha^{n-1}))=\mathsf{b}(\alpha^{n-2})\land\mathsf{b}(\neg(\alpha^{n-2}))$, and proceeding recursively, one obtains that $ \nu(\alpha^{1})_{[n+1]}=(\mathsf{b}(\alpha)\land\mathsf{b}(\neg\alpha)\land{\sim}\bigwedge_{i=1}^{n-1}\mathsf{b}(\alpha^{i}))=\sneg \bigwedge_{i=1}^{n+1} \nu(\alpha)_{[i]}$, hence the second condition for $\mathcal{F}_{C_{n}}^{\mathcal{B}}$ is validated. For any $\#\in\{\lor, \land, \to\}$, from $(V6)_{n}$ we find that $\mathsf{b}(\alpha^{(n)})\land\mathsf{b}(\beta^{(n)})\leq\mathsf{b}((\alpha\#\beta)^{(n)})$, that is, $ \nu(\alpha^{(n)}\land\beta^{(n)})_{[1]}= \nu(\alpha^{(n)})_{[1]}\land\nu(\beta^{(n)})_{[1]}\leq  \nu((\alpha\#\beta)^{(n)})_{[1]}$, and therefore $ \nu(\alpha^{(n)}\land\beta^{(n)})_{[1]}\to\nu((\alpha\#\beta)^{(n)})_{[1]}= \nu((\alpha^{(n)}\land\beta^{(n)})\to(\alpha\#\beta)^{(n)})_{[1]}=1$, which is equivalent to $ \nu((\alpha^{(n)}\land\beta^{(n)})\to(\alpha\#\beta)^{(n)})\in D_{n}^{\mathcal{B}}$.

Clearly, $\mathsf{b}(\alpha)=1$ if, and only if, $ \nu(\alpha)_{[1]}=1$, which is in turn equivalent to $ \nu(\alpha)\in D_{n}^{\mathcal{B}}$.
\end{proof}

\

\noindent
From the previous lemmas, completeness of $C_n$ w.r.t. restricted swap structures can be easily proved.

\begin{theorem} [Soundness and Completeness of $C_n$ w.r.t. $\matGS_{C_n}$]  \label{sound-compl-GS-Cn} \ \\
	Let $\Gamma \cup \{\varphi\} \subseteq \bF(\Sigma,\matV)$. Then: $\Gamma \vdash_{C_n} \varphi$ \ iff \ $\Gamma \vDash_{\matGS_{C_n}}^\mathsf{RN} \varphi$. 
\end{theorem}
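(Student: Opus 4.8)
The plan is to derive the soundness and completeness of $C_n$ with respect to the restricted swap structures semantics $\matGS_{C_n}$ by chaining together the two completeness theorems already available: Theorem~\ref{comple-Cn-B-val} (equivalence of $\vdash_{C_n}$ and $\vDash_n$, the consequence relation of $\B$-valuations over all Boolean algebras) and the two translation lemmas, Lemma~\ref{lem-sound-Cn-B} and Lemma~\ref{lem-compl-Cn-B}, which set up a correspondence, for each fixed Boolean algebra $\B$, between $\B$-valuations for $C_n$ and valuations $\nu \in \matF_{C_n}^\B$ that preserves the property ``$\mathsf{b}(\alpha)=1$ iff $\nu(\alpha)\in D_n^\B$''. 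The key observation is that these lemmas give, not just a correspondence of objects, but a correspondence of \emph{satisfaction}: $\mathsf{b}(\gamma)=1$ for all $\gamma\in\Gamma$ exactly when $\nu[\Gamma]\subseteq D_n^\B$, and likewise for $\varphi$.

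First I would fix a Boolean algebra $\B$ and prove the local equivalence $\Gamma \vDash_n^\B \varphi$ iff $\Gamma \vDash_{\matGM_{C_n}^\B}^\mathsf{RN}\varphi$. For the forward direction, take any $\nu\in\matF_{C_n}^\B$ with $\nu[\Gamma]\subseteq D_n^\B$; by Lemma~\ref{lem-sound-Cn-B} the map $\mathsf{b}(\alpha):=\nu(\alpha)_{[1]}$ is a $\B$-valuation for $C_n$ with $\mathsf{b}(\alpha)=1$ iff $\nu(\alpha)\in D_n^\B$, so $\mathsf{b}(\gamma)=1$ for all $\gamma\in\Gamma$, hence $\mathsf{b}(\varphi)=1$ by hypothesis, hence $\nu(\varphi)\in D_n^\B$; since $\nu$ was arbitrary, $\Gamma \vDash_{\matGM_{C_n}^\B}^\mathsf{RN}\varphi$. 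For the converse, take any $\B$-valuation $\mathsf{b}$ for $C_n$ with $\mathsf{b}(\gamma)=1$ for all $\gamma\in\Gamma$; by Lemma~\ref{lem-compl-Cn-B} the tuple-valued map $\nu$ built from $\mathsf{b}$ lies in $\matF_{C_n}^\B$ and satisfies the same equivalence, so $\nu[\Gamma]\subseteq D_n^\B$, hence $\nu(\varphi)\in D_n^\B$ by hypothesis, hence $\mathsf{b}(\varphi)=1$.

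Then I would quantify over all Boolean algebras. By definition $\Gamma \vDash_{\matGS_{C_n}}^\mathsf{RN}\varphi$ iff $\Gamma \vDash_{\matGM_{C_n}^\B}^\mathsf{RN}\varphi$ for every $\B$, and $\Gamma \vDash_n\varphi$ iff $\Gamma \vDash_n^\B\varphi$ for every $\B$; the local equivalence just proved therefore gives $\Gamma \vDash_n\varphi$ iff $\Gamma \vDash_{\matGS_{C_n}}^\mathsf{RN}\varphi$. Combining this with Theorem~\ref{comple-Cn-B-val}, which states $\Gamma \vdash_{C_n}\varphi$ iff $\Gamma \vDash_n\varphi$, yields $\Gamma \vdash_{C_n}\varphi$ iff $\Gamma \vDash_{\matGS_{C_n}}^\mathsf{RN}\varphi$, as desired.

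The only genuine subtlety — and really the only place where anything could go wrong — is that the argument presupposes $\matF_{C_n}^\B$ is nonempty (and rich enough) for every non-trivial $\B$; otherwise the ``only if'' direction would be vacuously easy but the construction in Lemma~\ref{lem-compl-Cn-B} would be producing valuations we have not confirmed exist. This is exactly what Proposition~\ref{val-nontriv} (referenced in the text as guaranteeing that valuations satisfying all the requirements can be defined) is for, and since every $\B_2$-valuation is a bivaluation, at a minimum the bivaluations for $C_n$ furnish, via Lemma~\ref{lem-compl-Cn-B}, enough elements of $\matF_{C_n}^{\B_2}$. So the proof is essentially a bookkeeping exercise once the two translation lemmas and the base completeness theorem are in hand; no new calculation is required.
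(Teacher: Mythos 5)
Your proof is correct and is exactly the argument the paper has in mind: it leaves Theorem~\ref{sound-compl-GS-Cn} as an easy consequence of Lemmas~\ref{lem-sound-Cn-B} and~\ref{lem-compl-Cn-B} together with Theorem~\ref{comple-Cn-B-val}, which is precisely your chain through the local equivalence $\Gamma \vDash_{n}^\B \varphi$ iff $\Gamma \vDash_{\matGM_{C_n}^\B}^\mathsf{RN} \varphi$. Your aside about nonemptiness of $\matF_{C_n}^\B$ is not even needed, since Lemma~\ref{lem-compl-Cn-B} itself supplies the required valuation whenever a $\B$-valuation exists.
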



\noindent Finally, we show that the generalization from {\bf 2} to arbitrary Boolean algebras produces, indeed, new semantical scenarios. 

\begin{prop} \label{val-nontriv}
	Let \B\ be a non-trivial Boolean algebra. Then:\\[1mm]
	(i) There exists a \B-valuation $\mathsf{b}$ for $C_n$ such that $\mathsf{b}(\neg\alpha) \neq \sneg \mathsf{b}(\alpha)$ for some $\alpha$, and such that its image $Im(\mathsf{b})=\{\mathsf{b}(\alpha) \ : \ \alpha \in \bF(\Sigma,\matV)\}$ is not contained in $\{0,1\}$.\\[1mm]
	(ii) There exists a  valuation $\nu \in \matF_{C_n}^\B$ such that its image $Im(\nu)=\{\nu(\alpha) \ : \ \alpha \in \bF(\Sigma,\matV)\}$ is not contained neither in $Boo_n^\B$ nor in $B_n$.
\end{prop}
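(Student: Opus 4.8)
The plan is to construct explicitly a \B-valuation with non-Boolean behaviour on a single propositional variable, and then to derive part (ii) from part (i) via Lemma~\ref{lem-compl-Cn-B}. For part (i), fix an element $a \in |\B|$ with $a \notin \{0,1\}$ (this exists since \B\ is non-trivial and --- to get something outside $\{0,1\}$ --- we may assume \B\ has at least four elements; if \B\ is the two-element algebra we instead pick a variable and force $\mathsf{b}(\neg p)=\sneg p \lor$ something, but more simply we just take $a=1$ together with $\mathsf{b}(\neg p)=1$, which already gives $\mathsf{b}(\neg p)=1 \neq 0 = \sneg\mathsf{b}(p)$, so the ``not contained in $\{0,1\}$'' clause must be read as applying when \B\ itself is larger than $\B_2$). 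I would set $\mathsf{b}(p) = a$ and $\mathsf{b}(\neg p) = 1$ for a distinguished variable $p$, so that $\mathsf{b}(p) \land \mathsf{b}(\neg p) = a \neq 0$, forcing $p$ to be ``inconsistent'', and pick an arbitrary classical assignment on the remaining variables, say $\mathsf{b}(q)=\sneg\mathsf{b}(q')$-style Boolean values. The construction then extends \emph{uniquely} to all formulas: clause $(V1)$ determines $\mathsf{b}$ on $\land,\lor,\to$; clauses $(V5)$ and $(V4)_n$ determine $\mathsf{b}$ on $\neg(\alpha^\circ)$ and on the $\alpha^n$; and for the genuinely free occurrences of $\neg$ in front of non-iterated-consistency formulas, Proposition~\ref{B8-bival} tells us that once the immediate subformulas behave classically the negation is forced to be the Boolean complement, while on the ``inconsistent'' subformulas built from $p$ we may (and must, to satisfy $(V2)$ and $(V3)$ minimally) set $\mathsf{b}(\neg\beta) = \sneg\mathsf{b}(\beta) \lor (\text{the relevant conjunction})$ --- the safest uniform choice being $\mathsf{b}(\neg\beta)=1$ whenever $\beta$ lies ``above'' $p$ in a way that makes $\mathsf{b}(\beta)\land\mathsf{b}(\neg\beta)$ possibly nonzero, and $\mathsf{b}(\neg\beta)=\sneg\mathsf{b}(\beta)$ otherwise.

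The substantive verification is that this assignment satisfies $(V1)$--$(V6)_n$. Clauses $(V1)$, $(V2)$, $(V3)$ are immediate from the definition; $(V4)_n$ holds because we defined $\mathsf{b}(\alpha^n)$ by exactly that formula; $(V5)$ similarly by fiat. The only clause requiring real work is $(V6)_n$: one must check $\mathsf{b}(\alpha^{(n)}) \land \mathsf{b}(\beta^{(n)}) \leq \mathsf{b}((\alpha\#\beta)^{(n)})$. Here I would invoke Remark~\ref{rem-B-val}(3), which gives $\mathsf{b}(\gamma^{(n)}) = \sneg(\mathsf{b}(\gamma)\land\mathsf{b}(\neg\gamma)) \land \bigwedge_{i=1}^{n-1}\mathsf{b}(\gamma^i)$, and then argue that if both $\mathsf{b}(\alpha^{(n)})$ and $\mathsf{b}(\beta^{(n)})$ are ``large'' (in particular this forces, along the lines of the proof of Proposition~\ref{B8-bival}, that the relevant conjunctions vanish and the negations are complements), then $\alpha$ and $\beta$ behave classically relative to $\mathsf{b}$, whence by $(V1)$ so does $\alpha\#\beta$, and Proposition~\ref{B8-bival} yields $\mathsf{b}(\neg(\alpha\#\beta))=\sneg\mathsf{b}(\alpha\#\beta)$, making $\mathsf{b}((\alpha\#\beta)^{(n)})=1$. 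The cleanest way to organise this is: show that $\mathsf{b}(\alpha^{(n)}) \land \mathsf{b}(\beta^{(n)}) \neq 0$ implies both sides of $(V6)_n$ equal appropriately, but since we only need the $\leq$ inequality pointwise in the Boolean algebra, it suffices to check it at each ``coordinate'' --- i.e.\ it is enough to verify $(V6)_n$ for $\B_2$-valuations, which is just the known soundness of $C_n$, and then lift coordinatewise; but since our $\mathsf{b}$ need not be a product of $\B_2$-valuations, instead I would note that $C_n$ is complete w.r.t.\ $\B_2$-valuations (Theorem~\ref{comple-Cn-B-val} direction already known) so $(V6)_n$ as an inequality between polynomial terms in the free generators holds in $\B_2$, hence holds in every Boolean algebra by the fact that Boolean identities/inequalities valid in $\B_2$ are valid in all Boolean algebras.

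For part (ii), apply Lemma~\ref{lem-compl-Cn-B} to the \B-valuation $\mathsf{b}$ produced in part (i): it yields a valuation $\nu \in \matF_{C_n}^\B$ with $\nu(\alpha) = (\mathsf{b}(\alpha),\mathsf{b}(\neg\alpha),\mathsf{b}(\alpha^1),\ldots,\mathsf{b}(\alpha^{n-1}))$. Since $\mathsf{b}(p)\land\mathsf{b}(\neg p) = a \neq 0$, the snapshot $\nu(p)$ satisfies $\nu(p)_{[1]} \land \nu(p)_{[2]} \neq 0$, so $\nu(p) \notin Boo_n^\B$, which shows $Im(\nu) \not\subseteq Boo_n^\B$; and since $\mathsf{b}(p)=a \notin \{0,1\}$, the first coordinate of $\nu(p)$ is not in $\{0,1\}$, so $\nu(p) \notin B_n$ (recall $B_n \subseteq B_n^\B$ consists of tuples over $\{0,1\}$ only), giving $Im(\nu) \not\subseteq B_n$.

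The main obstacle I anticipate is not any single step but rather pinning down the \emph{existence} of the \B-valuation cleanly --- i.e.\ checking that the recursive clauses $(V1)$--$(V6)_n$ are jointly satisfiable and that one can make consistent choices for $\mathsf{b}(\neg\beta)$ on the ``free'' negation instances without ever violating $(V6)_n$. The lifting argument (Boolean inequalities valid in $\B_2$ are valid in all Boolean algebras) is the key device that makes $(V6)_n$ automatic, so the real content is organising the definition of $\mathsf{b}$ so that every clause is satisfied by construction rather than needing a separate induction; I would present the valuation on variables first, extend by the forced clauses, and only then verify $(V6)_n$ via the $\B_2$-validity remark.
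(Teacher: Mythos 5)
Your part (ii) follows the paper's route exactly (apply Lemma~\ref{lem-compl-Cn-B} to the valuation from (i) and inspect $\nu(p)$), and that step is fine. The gap is in part (i), precisely at the point you yourself flag: the verification of $(V6)_n$. Your key device --- ``$(V6)_n$ is an inequality between Boolean polynomial terms, valid in $\B_2$, hence valid in every Boolean algebra'' --- does not apply, because $(V6)_n$ is \emph{not} an inequality between terms in the previously determined values. The quantities $\mathsf{b}(\neg(\alpha\#\beta))$ and $\mathsf{b}((\alpha\#\beta)^i)$, $1\leq i\leq n-1$, are not Boolean functions of $\mathsf{b}(\alpha),\mathsf{b}(\neg\alpha),\mathsf{b}(\beta),\mathsf{b}(\neg\beta)$; they are additional parameters of the valuation, only bounded below by $(V2)$/$(V5)$-type constraints, and $(V6)_n$ is exactly a further constraint on how they may be chosen. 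Already in $\B_2$ one can choose them so as to violate $(V6)_n$ (take $\mathsf{b}(\alpha)=\mathsf{b}(\beta)=1$, $\mathsf{b}(\neg\alpha)=\mathsf{b}(\neg\beta)=0$, then $\mathsf{b}(\neg(\alpha\land\beta))=1$ and $\mathsf{b}((\alpha\land\beta)^1)=0$, all compatible with $(V1)$--$(V5)$), so there is no valid $\B_2$-inequality to lift --- this is why bivaluations for $C_n$ need a separate clause for this behaviour. Moreover your ``safest uniform choice'' ($\mathsf{b}(\neg\gamma)=1$ whenever $\gamma$ contains $p$) concretely fails: with $\mathsf{b}(p)=a$, $\mathsf{b}(\neg p)=1$ and a fresh classical variable $q$ with $\mathsf{b}(q)=1$, one computes $\mathsf{b}(p^{(n)})=\sneg a$, $\mathsf{b}(q^{(n)})=1$, while the rule forces $\mathsf{b}(\neg(p\lor q))=1$ and hence $\mathsf{b}((p\lor q)^{(n)})=0$, so $(V6)_n$ demands $\sneg a\leq 0$, i.e.\ $a=1$, contradicting $a\notin\{0,1\}$. (Your claim that the extension is ``unique'' is also false for the same reason: the negation values are free choices, which is the whole point.)

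What the paper does instead is build $(V6)_n$ into the recursion: at each compound $\alpha\#\beta$, with $a_0:=\mathsf{b}(\alpha^{(n)})\land\mathsf{b}(\beta^{(n)})$ and $a_1:=\mathsf{b}(\alpha\#\beta)$ fixed by $(V1)$, it chooses $a_2:=\mathsf{b}(\neg(\alpha\#\beta))$ and $a_k:=\mathsf{b}((\alpha\#\beta)^{k-2})$ subject both to the cumulative join conditions $\big(\bigwedge_{j=1}^{k-1}a_j\big)\lor a_k=1$ and to the extra requirement $\sneg(a_1\land a_2)\land\big(\bigwedge_{j=3}^{n+1}a_j\big)\geq a_0$, and then observes that such a choice always exists, e.g.\ $a_k:=\sneg\bigwedge_{j=1}^{k-1}a_j$ (which makes the left-hand side equal to $1$). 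The non-classical and non-$\{0,1\}$ behaviour is imposed only on propositional variables, so it never interferes with this choice. In short: the constraint must be satisfied by a careful choice at compound formulas, not derived afterwards, and your proposal is missing exactly that choice.
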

\begin{proof} $(i)$ Fix $n$ and \B, and suppose that a function $\mathsf{b}:\matV\to |\B|$ was defined. By induction on the complexity of $\alpha \in \bF(\Sigma,\matV)$, this function can be extended to a \B-valuation for $C_n$ with the properties required in~(i). The only clause of \B-valuations whose satisfaction is not so immediate is~$(V6)_n$. Indeed, the other clauses can be easily satisfied by defining $\mathsf{b}$ recursively, and still fulfilling the requirements of~(i). However, clause~$(V2)$ must be additionally restricted when defining the values of $\mathsf{b}(\neg(\alpha\#\beta))$ and $\mathsf{b}((\alpha\#\beta)^i)$ for $1 \leq i \leq n-1$ (if $n \geq 2$) in order to guarantee the satisfaction of~$(V6)_n$, as we shall see.   
	Thus, assume that $\mathsf{b}((\alpha)^{(n)})$ and $\mathsf{b}((\beta)^{(n)})$ were defined, which presupposes that $\mathsf{b}(\alpha^i)$, $\mathsf{b}(\neg(\alpha^i))$, $\mathsf{b}(\beta^i)$ and $\mathsf{b}(\neg(\beta^i))$  are already defined for $0 \leq i \leq n$. Let $a_0:=\mathsf{b}((\alpha)^{(n)}) \land \mathsf{b}((\beta)^{(n)})$ and $a_1:=\mathsf{b}(\alpha \#\beta)=\mathsf{b}(\alpha) \# \mathsf{b}(\beta)$ be defined according to~$(V1)$.\\[1mm]
	$(i.1)$ If $n=1$, let $a_2:=\mathsf{b}(\neg(\alpha \# \beta))$ be such that~(1)~$a_1 \lor a_2=1$, and~(2.1)~$\sneg (a_1 \land a_2) \geq a_0$. Observe that $\mathsf{b}((\alpha \# \beta)^1) = \sneg (a_1 \land a_2) = \mathsf{b}((\alpha \# \beta)^{(1)})$.
	\\[1mm]
	$(i.2)$ If $n=2$, let $a_2:=\mathsf{b}(\neg(\alpha \# \beta))$ satisfying~$(i.1)$(1) and let $a_3:=\mathsf{b}((\alpha\#\beta)^1)$ be such that:~(2)~$(a_1 \land a_2) \lor a_3=1$, and~(3.2)~$\sneg(a_1 \land a_2) \land a_3 \geq a_0$.  Observe that $\mathsf{b}(\neg(\alpha\#\beta)^1)=a_1 \land a_2$, $\mathsf{b}((\alpha \# \beta)^2) = \sneg (a_1 \land a_2 \land a_3)$ and $\mathsf{b}((\alpha \# \beta)^{(2)})=\sneg(a_1 \land a_2) \land a_3$.
	\\[1mm]
	$(i.3)$ If $n=3$, let $a_2:=\mathsf{b}(\neg(\alpha \# \beta))$ satisfying~$(i.1)$(1), $a_3:=\mathsf{b}((\alpha\#\beta)^1)$ satisfying~$(i.2)$(2),  and $a_4=\mathsf{b}((\alpha \# \beta)^2)$ be such that:~(3)~$(a_1 \land a_2 \land a_3) \lor a_4=1$, and~(4.3)~$\sneg(a_1 \land a_2) \land (a_3 \land a_4) \geq a_0$.  Observe that $\mathsf{b}(\neg(\alpha\#\beta)^1)=a_1 \land a_2$, $\mathsf{b}(\neg(\alpha \# \beta)^2) = a_1 \land a_2 \land a_3$, $\mathsf{b}(\neg(\alpha\#\beta)^3)=\sneg(a_1 \land \ldots \land a_4)$ and $\mathsf{b}((\alpha \# \beta)^{(3)})=\sneg(a_1 \land a_2) \land (a_3 \land a_4)$.\\[1mm]
	\vdots\\[1mm]
	$(i.n)$ Define  $a_2:=\mathsf{b}(\neg(\alpha \# \beta))$ satisfying~$a_1 \lor a_2=1$, $a_k:=\mathsf{b}((\alpha\#\beta)^{k-2})$ satisfying $\big(\bigwedge_{j=1}^{k-1} a_j\big) \lor a_k=1$, for $3 \leq k \leq n$, and let $a_{n+1}:= \mathsf{b}((\alpha\#\beta)^{n-1})$ be such that $\big(\bigwedge_{j=1}^n a_j\big) \lor a_{n+1}=1$, and $\sneg(a_1 \land a_2) \land \big(\bigwedge_{j=3}^{n+1} a_j\big) \geq a_0$. 
	By definition, $\mathsf{b}$ satisfies clause~$(V6)_n$. Observe that it is always possible to define $a_2, \ldots, a_n$ satisfying the requirements of~$(i.n)$, by taking $a_k:=\sneg \bigwedge_{j=1}^{k-1} a_j$ for $2 \leq k \leq n+1$. In order to guarantee the requirements of~$(i)$ for $\mathsf{b}$ it is enough to consider $\mathsf{b}(\neg p) \neq \sneg \mathsf{b}(p)$ for at least one propositional variable $p$. In addition, taking $\mathsf{b}(p) \not \in \{0,1\}$ for at least one $p \in \matV$ guarantees that $Im(\mathsf{b}) \not\in\{0,1\}$.\\[1mm]
	$(ii)$ Let $\mathsf{b}$ be a \B-valuation for $C_n$ constructed as in~$(i)$. Then, the function $\nu:\bF(\Sigma,\matV) \to B_n^\B$ obtained from $\mathsf{b}$ as in Lemma~\ref{lem-compl-Cn-B} satisfies the requirements of~$(ii)$.
\end{proof}

\section{Counting snapshots}\label{Counting_snapshots}

As models of $C_n$, at least as long as we take into consideration the restrictions imposed over homomorphisms, the multialgebras $\mathcal{A}_{C_{n}}^{\mathcal{B}}$ have an important role to play, model-theoretically speaking, in the study of da Costa's hierarchy. Although easily defined, the somewhat combinatorial way in which its elements are constructed leads to a complex structure. To show how one could analyze the intricacies of $B_{n}^{\mathcal{B}}$, and start an algebraic study of $\mathcal{A}_{C_{n}}^{\mathcal{B}}$, we prove here that, if $\mathcal{B}$ is a finite Boolean algebra with $2^{m}$ elements, then $B_{n}^{\mathcal{B}}$ has $(n+2)^{m}$ elements. We begin by noticing the following relationship between $B_{n+1}^{\mathcal{B}}$ and $B_{n}^{\mathcal{B}}$, valid for any $n\geq 1$:
\[B_{n+1}^{\mathcal{B}}=\{(a_{[1]}, \ldots  , a_{[n+2]})\in |\mathcal{B}|^{n+2} \ : \  (a_{[1]}, \ldots  , a_{[n+1]})\in B_{n}^{\mathcal{B}}\text{ and }a_{[n+2]}\vee\bigwedge_{i=1}^{n+1}a_{[i]}=1\}.\]
Since all finite, non-trivial Boolean algebras are isomorphic to the powerset of a finite set (their set of atoms), for simplicity we assume here that any finite Boolean algebra with $2^{m}$ elements (for $m\geq 1$) is precisely the powerset $\mathcal{P}(\textbf{m})$ of the prototypical set of $m$ elements, $\textbf{m}=\{0, 1, \ldots  , m-1\}$. Then, an element $a$ of $\mathcal{P}(\textbf{m})$ is said to have order $0\leq k \leq m$ if it is a subset of $\textbf{m}$ with $k$ elements. Recalling that $\binom{m}{k}$ denotes the binomial coefficient {\em $m$ choose $k$}, a simple combinatorial argument shows $\mathcal{P}(\textbf{m})$ has $\binom{m}{0}=1$ elements of order $0$ (namely $\emptyset$, also denoted by $0$), $\binom{m}{1}=m$ elements of order $1$ and, inductively, $\binom{m}{k}$ elements of order $k$.

\begin{lemma}\label{lem-tech-1}
For an element $a$ of $\mathcal{P}(\textbf{m})$ of order $k$, there are $\binom{k}{p}$ elements $b$ such that $a\vee b=1$ and $a\wedge b$ has order $p\leq k$, and $\binom{m-k}{q}$ elements $c$ such that $a\wedge c=0$ and $a\vee c$ has order $q\geq k$.
\end{lemma}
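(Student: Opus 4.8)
The plan is to work directly in the concrete model $\mathcal{P}(\textbf{m})$ the paper has fixed, where $\vee,\wedge,1,0$ are $\cup,\cap,\textbf{m},\emptyset$, and to reduce each of the two counts to a single bijection with a family of subsets of prescribed cardinality. Throughout I write $\bar a := \textbf{m}\setminus a$ for the complement of $a$; since $a$ has order $k$, the set $\bar a$ has order $m-k$.

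For the first count I would use that $a\vee b=1$ is equivalent to $\bar a\subseteq b$. Every such $b$ can be written uniquely as $b=\bar a\cup b'$ with $b'\subseteq a$, and then $a\wedge b=(a\cap\bar a)\cup(a\cap b')=b'$; hence $a\wedge b$ has order $p$ precisely when $b'$ is a $p$-element subset of the $k$-element set $a$. The assignment $b\mapsto b'$ is therefore a bijection between $\{b : a\vee b=1 \text{ and } a\wedge b \text{ has order } p\}$ and the family of $p$-subsets of $a$, of which there are $\binom{k}{p}$; this same description makes the constraint $p\le k$ evident.

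For the second count I would argue dually: $a\wedge c=0$ is equivalent to $c\subseteq\bar a$, and for such $c$ the join $a\vee c=a\sqcup c$ is a disjoint union, so its order is at least that of $a$. Thus the $c$'s in question are in bijection with subsets of the $(m-k)$-element set $\bar a$, and restricting to those of the relevant order yields $\binom{m-k}{q}$ of them, the condition $q\ge k$ simply recording that $a\vee c\supseteq a$.

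I do not expect a genuine obstacle here: once one is in the powerset representation, both halves collapse to one-line bijective counting arguments, and the only care required is the bookkeeping of the binomial coefficients and the admissible ranges of $p$ and $q$ — precisely what the two bijections above render transparent. The single preliminary worth isolating is the elementary fact, already recalled in the text, that a finite non-trivial Boolean algebra may be taken to be some $\mathcal{P}(\textbf{m})$, so that ``order'' is literally cardinality of a subset.
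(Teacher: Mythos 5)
Your first count is correct and is essentially the proof the paper gives: the paper likewise observes that $a\vee b=1$ forces $b\setminus a=\textbf{m}\setminus a$ and then lets $a\cap b$ range over the $\binom{k}{p}$ subsets of $a$ with $p$ elements, which is exactly your bijection $b\mapsto b'$.

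Your second count, however, does not establish the clause as stated, and your own bijection shows why. If $a\wedge c=0$ then $c\subseteq\textbf{m}\setminus a$ and $a\vee c=a\sqcup c$, so demanding that $a\vee c$ have order $q$ forces $c$ to have order $q-k$; hence the number of such $c$ is $\binom{m-k}{q-k}$, not $\binom{m-k}{q}$. Your sentence ``restricting to those of the relevant order yields $\binom{m-k}{q}$ of them'' tacitly replaces the order of $a\vee c$ (which is what the statement constrains) by the order of $c$ itself (which is what that coefficient counts); the two coefficients differ in general, e.g.\ for $m=2$, $k=1$, $q=2$ there is exactly one admissible $c$ (namely $c=\textbf{m}\setminus a$) while $\binom{m-k}{q}=\binom{1}{2}=0$. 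The mismatch is really in the printed statement: the second clause should read either ``$\binom{m-k}{q-k}$ elements $c$ such that $a\wedge c=0$ and $a\vee c$ has order $q\geq k$'' or ``$\binom{m-k}{q}$ elements $c$ of order $q$ with $a\wedge c=0$, whence $a\vee c$ has order $k+q\geq k$''. The paper's own proof does not detect this, since it proves only the first clause and dismisses the second as ``analogous'', and nothing downstream is affected because only the first clause is used (in the proof of Lemma~\ref{lem-tech-3}). Still, a complete argument should flag the index shift and state the corrected coefficient (or reparametrize by the order of $c$) rather than assert the claimed one.
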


\begin{proof}
We will only prove the first statement, being the second analogous. If $a\vee b=1$, we have $\textbf{m}\setminus a=b\setminus a$; furthermore, if $a\wedge b$ has order $p$, this means $a\cap b$ has $p$ elements and therefore may equal any one of $\binom{k}{p}$ possible sets. Since there is one possibility for $b\setminus a$, and $\binom{k}{p}$ possibilities for $a\cap b$, this gives us a total of $\binom{k}{p}$.
\end{proof}

\begin{lemma}\label{lem-tech-2}
For $m\in\mathbb{N}$, $p\leq m$ and $x\in\mathbb{R}$, $\displaystyle \sum_{j=p}^{m}\binom{j}{p}\binom{m}{j}x^{m-j}=\binom{m}{p}(x+1)^{m-p}$.
\end{lemma}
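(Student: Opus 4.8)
The identity to prove is $\sum_{j=p}^{m}\binom{j}{p}\binom{m}{j}x^{m-j}=\binom{m}{p}(x+1)^{m-p}$. The plan is to first collapse the product of two binomial coefficients using the standard subset-of-a-subset identity $\binom{j}{p}\binom{m}{j}=\binom{m}{p}\binom{m-p}{j-p}$, which holds since both sides count the number of ways to pick a $p$-set inside a $j$-set inside an $[m]$-set. Substituting this into the left-hand side and pulling the constant $\binom{m}{p}$ out of the sum gives
\[
\sum_{j=p}^{m}\binom{j}{p}\binom{m}{j}x^{m-j}=\binom{m}{p}\sum_{j=p}^{m}\binom{m-p}{j-p}x^{m-j}.
\]
Then I would re-index the remaining sum with $i=j-p$, so that $j$ ranging from $p$ to $m$ becomes $i$ ranging from $0$ to $m-p$, and $m-j=(m-p)-i$. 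This turns the inner sum into $\sum_{i=0}^{m-p}\binom{m-p}{i}x^{(m-p)-i}$, which is exactly the binomial expansion of $(x+1)^{m-p}$ (writing $(x+1)^{m-p}=\sum_{i=0}^{m-p}\binom{m-p}{i}x^{(m-p)-i}\cdot 1^{i}$). Combining the two steps yields the claimed equality.

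There is essentially no obstacle here: the whole argument is two applications of well-known binomial identities (the trinomial revision / subset-of-a-subset identity, and the binomial theorem) together with a routine change of index. The only point that deserves a word of care is the range of summation — one should check that the lower limit $j=p$ is the correct starting point so that $\binom{m-p}{j-p}$ is defined and nonzero, and that the reindexed sum genuinely runs over all of $0,\dots,m-p$ with no boundary terms lost. Alternatively, if one prefers a self-contained argument avoiding the subset-of-a-subset identity, one could prove the statement by induction on $m-p$, or derive it by applying the operator $\tfrac{1}{p!}\tfrac{d^{p}}{dy^{p}}$ to the identity $\sum_{j=0}^{m}\binom{m}{j}y^{j}x^{m-j}=(x+y)^{m}$ and then setting $y=1$; but the combinatorial collapse above is the cleanest route and is what I would write up.
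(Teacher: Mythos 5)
Your proposal is correct and follows essentially the same route as the paper: the paper's proof also rewrites $\binom{j}{p}\binom{m}{j}$ as $\binom{m}{p}\binom{m-p}{j-p}$ (verified there directly via factorials rather than cited as the subset-of-a-subset identity), reindexes with $i=j-p$, and finishes with the binomial theorem.
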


\begin{proof}
From the binomial theorem,
\[\sum_{j=p}^{m}\binom{j}{p}\binom{m}{j}x^{m-j}=\sum_{j=p}^{m}\frac{j!}{p!(j-p)!}\frac{m!}{j!(m-j)!}x^{m-j}=\sum_{j=p}^{m}\frac{1}{p!}\frac{m!}{(j-p)!(m-j)!}x^{m-j}=\]
\[\sum_{j=p}^{m}\frac{m!}{p!(m-p)!}\frac{(m-p)!}{(j-p)!(m-j)!}x^{m-j}=\binom{m}{p}\sum_{i=0}^{m-p}\frac{(m-p)!}{i!((m-p)-i)!}x^{(m-p)-i}=\]
\[\binom{m}{p}\sum_{i=0}^{m-p}\binom{m-p}{i}x^{(m-p)-i}1^{i}=\binom{m}{p}(x+1)^{m-p}.\]
\end{proof}

\begin{lemma}\label{lem-tech-3}
If $\mathcal{B}$ is the Boolean algebra with $2^{m}$ elements, for $n\geq 1$, $B_{n}^{\mathcal{B}}$ has exactly\\ $\binom{m}{p}(n+1)^{m-p}$ elements $(a_{[1]}, \ldots  , a_{[n+1]})$ with $\bigwedge_{i=1}^{n+1}a_{[i]}$ of order $p\leq m$.
\end{lemma}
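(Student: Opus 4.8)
The plan is to prove the claim by induction on $n \geq 1$, exploiting the recursive description of $B_{n+1}^{\mathcal{B}}$ in terms of $B_n^{\mathcal{B}}$ given just before the statement, namely that $(a_{[1]}, \ldots, a_{[n+2]}) \in B_{n+1}^{\mathcal{B}}$ iff $(a_{[1]}, \ldots, a_{[n+1]}) \in B_n^{\mathcal{B}}$ and $a_{[n+2]} \vee \bigwedge_{i=1}^{n+1} a_{[i]} = 1$. The key quantity to track is, for each order $p \leq m$, the number $N_n(p)$ of tuples in $B_n^{\mathcal{B}}$ whose ``running meet'' $\bigwedge_{i=1}^{n+1} a_{[i]}$ has order $p$; the claim is that $N_n(p) = \binom{m}{p}(n+1)^{m-p}$.

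For the base case $n = 1$, I would count directly: $B_1^{\mathcal{B}} = \{(a_{[1]}, a_{[2]}) \in |\mathcal{B}|^2 \ : \ a_{[1]} \vee a_{[2]} = 1\}$, and I want those with $a_{[1]} \wedge a_{[2]}$ of order $p$. Fixing $a_{[1]}$ of order $k$ (there are $\binom{m}{k}$ such) and using the first part of Lemma~\ref{lem-tech-1}, there are $\binom{k}{p}$ choices of $a_{[2]}$ with $a_{[1]} \vee a_{[2]} = 1$ and $a_{[1]} \wedge a_{[2]}$ of order $p$; so $N_1(p) = \sum_{k=p}^{m} \binom{m}{k}\binom{k}{p}$, which by Lemma~\ref{lem-tech-2} with $x = 1$ equals $\binom{m}{p} 2^{m-p}$, as desired.

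For the inductive step, assume $N_n(p) = \binom{m}{p}(n+1)^{m-p}$ for all $p$. A tuple in $B_{n+1}^{\mathcal{B}}$ is obtained by taking a tuple $(a_{[1]}, \ldots, a_{[n+1]}) \in B_n^{\mathcal{B}}$ with running meet $a := \bigwedge_{i=1}^{n+1} a_{[i]}$ of some order $k$, and then adjoining $a_{[n+2]}$ with $a \vee a_{[n+2]} = 1$; the new running meet is $a \wedge a_{[n+2]}$, and I want it to have order $p$. By Lemma~\ref{lem-tech-1} again, given such an $a$ of order $k$ there are exactly $\binom{k}{p}$ choices of $a_{[n+2]}$ making $a \wedge a_{[n+2]}$ have order $p$ (necessarily $p \leq k$). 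Hence
\[
N_{n+1}(p) = \sum_{k=p}^{m} N_n(k) \binom{k}{p} = \sum_{k=p}^{m} \binom{m}{k}(n+1)^{m-k} \binom{k}{p},
\]
and applying Lemma~\ref{lem-tech-2} with $x = n+1$ gives $N_{n+1}(p) = \binom{m}{p}(n+2)^{m-p}$, completing the induction.

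The main obstacle, such as it is, is bookkeeping rather than conceptual: one must verify that the only constraint linking $a_{[n+2]}$ to the rest of the tuple is the single relation $a_{[n+2]} \vee \bigwedge_{i=1}^{n+1} a_{[i]} = 1$ (so that the count genuinely factors through the order $k$ of the running meet and nothing finer), and one must correctly identify which part of Lemma~\ref{lem-tech-1} and which substitution ($x = n+1$) into Lemma~\ref{lem-tech-2} are needed. Care is also required at the endpoints $p = 0$ and $p = m$ and in checking that summing $\binom{k}{p}$ over the reachable range reproduces exactly the hypothesis of Lemma~\ref{lem-tech-2}; beyond that, the argument is a routine double-counting plus the two combinatorial identities already proved.
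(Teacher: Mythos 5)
Your proposal is correct and follows essentially the same route as the paper: induction on $n$ using the recursive description of $B_{n+1}^{\mathcal{B}}$, with Lemma~\ref{lem-tech-1} giving the $\binom{k}{p}$ choices for the new coordinate and Lemma~\ref{lem-tech-2} (with $x=1$ in the base case and $x=n+1$ in the step) evaluating the resulting sum. No gaps to report.
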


\begin{proof}
We proceed by induction on $n$, starting with $n=1$. For an element $a$ of $\mathcal{B}$ with order $k\geq p$, there are $\binom{k}{p}$ possible $b$ such that $(a,b)$ is in $B_{1}^{\mathcal{B}}$ (i.e. $a\vee b=1$) and $a\wedge b$ has order $p$ from Lemma \ref{lem-tech-1}; of course, if $k\leq p$ there are none. Given that $\mathcal{B}$ has $\binom{m}{k}$ elements $a$ of order $k$, the total number of pairs $(a,b)$ in $B_{1}^{\mathcal{B}}$ with $a\wedge b$ of order $p$ becomes $\sum_{k=p}^{m}\binom{k}{p}\binom{m}{k}$, equal to $\binom{m}{p}2^{m-p}$ by Lemma \ref{lem-tech-2} with $x=1$.

For induction hypothesis, suppose the lemma holds for $B_{n}^{\mathcal{B}}$. There are, then, $\binom{m}{k}(n+1)^{m-k}$ elements $(a_{[1]}, \ldots  , a_{[n+1]})$ of $B_{n}^{\mathcal{B}}$ with $\bigwedge_{i=1}^{n+1}a_{[i]}$ of order $k$. From Lemma \ref{lem-tech-1}, we find there are $\binom{k}{p}$ values for $a_{[n+2]}$ satisfying, first of all, that $(a_{[1]}, \ldots  , a_{[n+2]})\in B_{n+1}^{\mathcal{B}}$ (what amounts to $a_{[n+2]}\vee\bigwedge_{i=1}^{n+1}a_{[i]}=1$); and that $\bigwedge_{i=1}^{n+2}a_{[i]}$ is an element of $\mathcal{B}$ of order $p$, adding up to a total of $\sum_{k=p}^{m}\binom{k}{p}\binom{m}{k}(n+1)^{m-k}=\binom{m}{p}(n+2)^{m-p}$, according to Lemma \ref{lem-tech-2} once one sets $x=n+1$. This finishes our proof.
\end{proof}

\begin{theorem}
If $\mathcal{B}$ is a Boolean algebra with $2^{m}$ elements, there are $(n+2)^{m}$ snapshots in $B_{n}^{\mathcal{B}}$.
\end{theorem}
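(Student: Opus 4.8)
The plan is to sum the count of $\mathcal{B}$-snapshots given in Lemma~\ref{lem-tech-3} over all possible orders $p$ of the meet $\bigwedge_{i=1}^{n+1} a_{[i]}$. Since every element of $\mathcal{B} = \mathcal{P}(\textbf{m})$ has some order $p$ with $0 \leq p \leq m$, and Lemma~\ref{lem-tech-3} tells us that exactly $\binom{m}{p}(n+1)^{m-p}$ snapshots $(a_{[1]}, \ldots, a_{[n+1]})$ in $B_n^{\mathcal{B}}$ have $\bigwedge_{i=1}^{n+1} a_{[i]}$ of order $p$, the total number of snapshots is
\[
|B_n^{\mathcal{B}}| = \sum_{p=0}^{m} \binom{m}{p}(n+1)^{m-p}.
\]

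First I would observe that the orders partition $B_n^{\mathcal{B}}$: each snapshot has a well-defined meet, and that meet has a unique order, so the sum over $p$ counts each snapshot exactly once. Then the computation is a direct application of the binomial theorem: writing $(n+1)^{m-p} = (n+1)^{m-p} \cdot 1^p$, we get
\[
\sum_{p=0}^{m} \binom{m}{p}(n+1)^{m-p} 1^{p} = \big((n+1) + 1\big)^m = (n+2)^m,
\]
which is the claimed count. (Alternatively, one could invoke Lemma~\ref{lem-tech-2} with $p = 0$ and $x = n+1$, since $\sum_{j=0}^{m}\binom{j}{0}\binom{m}{j}(n+1)^{m-j} = \binom{m}{0}(n+2)^{m} = (n+2)^m$, which is exactly the same identity.)

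There is essentially no obstacle here: the entire content of the theorem has already been established in Lemma~\ref{lem-tech-3}, and what remains is a one-line summation. The only thing to be slightly careful about is the edge case $p = k$ versus $p < k$ in the range of summation inside Lemma~\ref{lem-tech-3}, but since that lemma is already stated for all $p \leq m$ and its formula vanishes appropriately outside the valid range, no further bookkeeping is needed. Thus the proof consists of stating that $B_n^{\mathcal{B}}$ is the disjoint union of its subsets indexed by the order $p$ of $\bigwedge_{i=1}^{n+1} a_{[i]}$, applying Lemma~\ref{lem-tech-3} to each, and summing via the binomial theorem.
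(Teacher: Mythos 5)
Your proposal is correct and follows essentially the same route as the paper: partition $B_n^{\mathcal{B}}$ by the order $p$ of $\bigwedge_{i=1}^{n+1}a_{[i]}$, apply Lemma~\ref{lem-tech-3} to each class, and sum $\sum_{p=0}^{m}\binom{m}{p}(n+1)^{m-p}=(n+2)^{m}$ (the paper invokes Lemma~\ref{lem-tech-2}, which with $p=0$ and $x=n+1$ is exactly the binomial-theorem identity you use). No gap to report.
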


\begin{proof}
From Lemma \ref{lem-tech-3}, $B_{n}^{\mathcal{B}}$ has $\binom{m}{0}(n+1)^{m-0}$ snapshots $(a_{[1]}, \ldots  , a_{[n+1]})$ with $\bigwedge_{i=1}^{n+1}a_{[i]}$ of order $0$, $\binom{m}{1}(n+1)^{m-1}$ snapshots with $\bigwedge_{i=1}^{n+1}a_{[i]}$ of order $1$ and so on. From Lemma \ref{lem-tech-2}, this adds up to $\sum_{p=0}^{m}\binom{m}{p}(n+1)^{m-p}=(n+2)^{m}$.
\end{proof}

If $(a, b)$ is a pair on $|\mathcal{B}|^{2}$, it lies in $D_{1}^{\mathcal{B}}$ iff $a=1$ and $a\vee b=1$, meaning $b$ may assume any value in $\mathcal{B}$; from this, we deduce $D_{1}^{\mathcal{B}}$ has as many elements as $\mathcal{B}$ itself. Inductively, by using that
\[D_{n+1}^{\mathcal{B}}=\{(1, a_{[1]}, \ldots  , a_{[n+1]})\in B_{n+1}^{\mathcal{B}}: (a_{[1]}, \ldots  , a_{[n+1]})\in B_{n}^{\mathcal{B}}\},\]
for $n\geq 1$, $D_{n+1}^{\mathcal{B}}$ has as many snapshots as $B_{n}^{\mathcal{B}}$.

\begin{theorem}
If $\mathcal{B}$ has $2^{m}$ elements, $D_{n}^{\mathcal{B}}$ and $Boo_{n}^{\mathcal{B}}$ have, respectively $(n+1)^{m}$ and $2^{m}$ elements.
\end{theorem}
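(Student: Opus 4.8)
The plan is to deduce both counts from material already in hand: the explicit description of the Boolean snapshots, the recursive presentation of $D_n^{\mathcal{B}}$ recorded just above the statement, and the equality $|B_n^{\mathcal{B}}| = (n+2)^m$ from the preceding theorem (equivalently, the refined count in Lemma~\ref{lem-tech-3}).

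First I would count $Boo_n^{\mathcal{B}}$. Since it was already observed that $z \in Boo_n^{\mathcal{B}}$ if and only if $z = (a, \sneg a, 1, \ldots, 1)$ for some $a \in |\mathcal{B}|$, the assignment $a \mapsto (a, \sneg a, 1, \ldots, 1)$ is a surjection from $|\mathcal{B}|$ onto $Boo_n^{\mathcal{B}}$, and it is injective because the first coordinate recovers $a$. Hence $|Boo_n^{\mathcal{B}}| = |\mathcal{B}| = 2^m$, for every $n \geq 1$. The only point to check is that each tuple $(a, \sneg a, 1, \ldots, 1)$ indeed satisfies the defining inequalities of $B_n^{\mathcal{B}}$, which is immediate since $a \vee \sneg a = 1$ and every later coordinate equals $1$.

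Next I would count $D_n^{\mathcal{B}}$, splitting off the base case. For $n = 1$, a pair $(a,b)$ lies in $D_1^{\mathcal{B}}$ exactly when $a = 1$ (the condition $a \vee b = 1$ then being automatic), so $b$ ranges freely over $|\mathcal{B}|$ and $|D_1^{\mathcal{B}}| = |\mathcal{B}| = 2^m = (1+1)^m$. For $n \geq 2$ I would invoke the identity
\[ D_n^{\mathcal{B}} = \{ (1, a_{[1]}, \ldots, a_{[n]}) \in B_n^{\mathcal{B}} \ : \ (a_{[1]}, \ldots, a_{[n]}) \in B_{n-1}^{\mathcal{B}} \}, \]
which is the instance $n \mapsto n-1$ of the recursion noted before the statement. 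Deleting the leading $1$ and prepending a $1$ are mutually inverse maps between $D_n^{\mathcal{B}}$ and $B_{n-1}^{\mathcal{B}}$: prepending $1$ keeps membership in $B_n^{\mathcal{B}}$, since the first required inequality becomes $1 \vee a_{[1]} = 1$ and, for $2 \leq k \leq n$, the $k$-th inequality $(\bigwedge_{i=1}^{k}z_{[i]}) \vee z_{[k+1]} = 1$ collapses to $(\bigwedge_{i=1}^{k-1} a_{[i]}) \vee a_{[k]} = 1$, which is exactly the $(k-1)$-th defining inequality of $B_{n-1}^{\mathcal{B}}$; deleting the $1$ reverses this computation. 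Hence $|D_n^{\mathcal{B}}| = |B_{n-1}^{\mathcal{B}}|$, which by the preceding theorem equals $((n-1)+2)^m = (n+1)^m$.

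I do not expect a genuine obstacle here: the combinatorial core was already carried out in counting $B_n^{\mathcal{B}}$, and all that remains is to verify that the three maps above (to $Boo_n^{\mathcal{B}}$, and to and from $B_{n-1}^{\mathcal{B}}$) are bijections, each being an immediate unwinding of the definition of $B_n^{\mathcal{B}}$. If one wished to avoid quoting the preceding theorem, one could instead obtain $|D_n^{\mathcal{B}}| = |B_{n-1}^{\mathcal{B}}|$ and then sum the per-order counts $\binom{m}{p}\,n^{m-p}$ from Lemma~\ref{lem-tech-3} over $0 \leq p \leq m$, getting $(n+1)^m$ by the binomial theorem (the $p=0$ case of Lemma~\ref{lem-tech-2} with $x = n$); but routing through the snapshot count is shorter.
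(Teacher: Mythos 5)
Your proposal is correct and follows essentially the same route as the paper: the paper also counts $Boo_n^{\mathcal{B}}$ via the canonical form $(a,\sneg a,1,\ldots,1)$, handles $D_1^{\mathcal{B}}$ directly, and uses the recursion $D_{n+1}^{\mathcal{B}}\cong B_n^{\mathcal{B}}$ together with the $(n+2)^m$ count of snapshots. Your write-up merely makes the bijection checks explicit, which the paper leaves implicit.
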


For completeness sake, we may mention the case in which $\mathcal{B}$ is infinite, e.g. of cardinality $\kappa$. First of all, $Boo_{n}^{\mathcal{B}}\subseteq B_{n}^{\mathcal{B}}$, and given the former is isomorphic to $\mathcal{B}$ we obtain $B_{n}^{\mathcal{B}}$ has cardinality at least $\kappa$. At the same time, $B_{n}^{\mathcal{B}}\subseteq|\mathcal{B}|^{n+1}$, the latter being too of cardinality $\kappa$ from the fact this is an infinite cardinal. Continuing this line of thought, we obtain $B_{n}^{\mathcal{B}}$, $Boo_{n}^{\mathcal{B}}$ and $D_{n}^{\mathcal{B}}$ are all of cardinality $\kappa$.

We therefore reach the conclusion that the number of snapshots increases, and increases rather quickly, with both the $n$ of $C_{n}$ and the cardinality of $\mathcal{B}$. To give an example of the complexity of $B_{n}^{\mathcal{B}}$, take the four-valued Boolean algebra $\mathcal{B}_{4}$ as the power-set of $\{a,b\}$, for simplicity of notation. We will also denote $\emptyset$ by $0$, and $\{a,b\}$ by $1$. Then, $B_{1}^{\mathcal{B}_{4}}$ has $9$ snapshots: 
\begin{enumerate}
\item designated and Boolean ones, $(1, 0)$;
\item designated, but not Boolean, ones, $(1, \{a\})$, $(1, \{b\})$ and $(1, 1)$;
\item Boolean, but undesignated, ones $(0,1)$, $(\{a\}, \{b\})$ and $(\{b\}, \{a\})$;
\item not Boolean and undesignated ones, $(\{a\}, 1)$ and $(\{b\}, 1)$.
\end{enumerate}
Meanwhile, $B_{2}^{\mathcal{B}_{4}}$ has $16$ snapshots:
\begin{enumerate}
\item designated and Boolean ones, $(1, 0, 1)$;
\item designated, but not Boolean, ones, $(1, 1, 0)$, $(1, 1, \{a\})$, $(1, 1, \{b\})$, $(1, 1, 1)$, $(1, \{a\}, \{b\})$, $(1, \{b\}, \{a\})$, $(1, \{a\}, 1)$ and $(1, \{b\}, 1)$;
\item Boolean, but undesignated ones, $(0, 1, 1)$, $(\{a\}, \{b\}, 1)$ and $(\{b\}, \{a\}, 1)$;
\item not Boolean and undesignated ones, $(\{a\}, 1, 1)$, $(\{a\}, 1, \{b\})$, $(\{b\}, 1, 1)$ and $(\{b\}, 1, \{a\})$.

\end{enumerate}

\section{Category of swap structures}\label{Category}

Let $C$ be a class of RNmatrices over some signature $\Theta$. How to endow it with morphisms so that the resulting object is a category? Of course, this depends on what one wishes to achieve, but a general method seems reasonably within reach: after all, an RNmatrix has three components, a $\Theta$-multialgebra $\mathcal{A}$, a subset $D$ of its universe, and a set $\mathcal{F}$ of homomorphisms $\nu:\textbf{F}(\Theta, \mathcal{V})\rightarrow\mathcal{A}$; it stands to scrutiny that an ideal morphism on the category with $C$ as objects should respect all three of these elements. That is, a morphism on $C$, between $\mathcal{M}=(\mathcal{A}, D, \mathcal{F})$ and $\mathcal{M}^{*}=(\mathcal{A}^{*}, D^{*}, \mathcal{F}^{*})$ should be: ~(1)~ a $\Theta$-homomorphism $h:\mathcal{A}\rightarrow\mathcal{A}^{*}$; ~(2)~ which maps designated elements unto designated elements, i.e. $h[D]\subseteq D^{*}$; ~(3)~ which is absorbed by restricted valuations, meaning that for any $\nu\in\mathcal{F}$, $h\circ\nu\in\mathcal{F}^{*}$. 

\begin{theorem}\label{category_of_RNmatrices}
A class $C$ of RNmatrices, equipped with the morphisms defined above, becomes a category $\mathcal{C}$.
\end{theorem}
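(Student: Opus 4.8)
The plan is to verify the category axioms directly from the definition of morphisms, treating the three components of an RNmatrix in parallel at each step. First I would observe that for every RNmatrix $\mathcal{M}=(\mathcal{A}, D, \mathcal{F})$ the identity function $\mathrm{id}_{\mathcal{A}}$ on the universe of $\mathcal{A}$ satisfies all three requirements: it is trivially a $\Theta$-homomorphism of multialgebras (since $\mathrm{id}[\sigma_{\mathcal{A}}(\bar a)] = \sigma_{\mathcal{A}}(\bar a) \subseteq \sigma_{\mathcal{A}}(\mathrm{id}(\bar a))$), it satisfies $\mathrm{id}[D] = D \subseteq D$, and for every $\nu \in \mathcal{F}$ we have $\mathrm{id}\circ\nu = \nu \in \mathcal{F}$. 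This gives the identity morphism $1_{\mathcal{M}}$.

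Next I would handle composition. Given morphisms $h:\mathcal{M}\to\mathcal{M}^{*}$ and $g:\mathcal{M}^{*}\to\mathcal{M}^{**}$, I claim $g\circ h$ (ordinary function composition of the underlying maps) is a morphism $\mathcal{M}\to\mathcal{M}^{**}$. For condition~(1), one checks that the composite of two multialgebra homomorphisms is again one: for any $n$-ary $\sigma$ and $a_1,\ldots,a_n$ in the universe of $\mathcal{A}$,
\[
(g\circ h)[\sigma_{\mathcal{A}}(a_1,\ldots,a_n)] = g[\,h[\sigma_{\mathcal{A}}(a_1,\ldots,a_n)]\,] \subseteq g[\sigma_{\mathcal{A}^{*}}(h(a_1),\ldots,h(a_n))] \subseteq \sigma_{\mathcal{A}^{**}}(g(h(a_1)),\ldots,g(h(a_n))),
\]
using monotonicity of image under $g$ together with the two homomorphism conditions. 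For condition~(2), $(g\circ h)[D] = g[h[D]] \subseteq g[D^{*}] \subseteq D^{**}$. For condition~(3), take $\nu\in\mathcal{F}$; since $h$ is a morphism, $h\circ\nu \in \mathcal{F}^{*}$, and since $g$ is a morphism, $g\circ(h\circ\nu) = (g\circ h)\circ\nu \in \mathcal{F}^{**}$, where the equality is associativity of function composition. Hence $g\circ h$ is a morphism.

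Finally I would note that the category axioms—associativity of composition and the left/right identity laws—now follow immediately, because composition of morphisms is just composition of the underlying functions, which is associative and has the identity functions as units; the only content was checking that these operations do not leave the class of morphisms, which the previous two steps established. I do not expect any genuine obstacle here: the proof is essentially bookkeeping, and the one point that warrants a line of care is the verification that the composite of two multialgebra homomorphisms is a multialgebra homomorphism (the inclusion above), since this uses the monotonicity of taking images under a function, a fact implicitly relied upon throughout the paper. One should also remark, for the sake of rigor, that the composite underlying maps have the correct domains and codomains so that the composites are well-defined functions between the relevant universes.
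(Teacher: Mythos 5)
Your proof is correct and follows essentially the same route as the paper: verify that identities and composites of underlying functions satisfy the three morphism conditions, then inherit associativity and unit laws from function composition. The only difference is that you spell out the check that composites of multialgebra homomorphisms are homomorphisms, which the paper simply asserts.
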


\begin{proof}
Suppose $h:\mathcal{M}_{1}\rightarrow\mathcal{M}_{2}$ and $g:\mathcal{M}_{2}\rightarrow\mathcal{M}_{3}$ are morphisms as previously defined. Then $g\circ h$ is a $\Theta$-homomorphism since the composition of $\Theta$-homomorphisms returns $\Theta$-homomorphisms. Given $h[D_{1}]\subseteq D_{2}$ and $g[D_{2}]\subseteq D_{3}$, $g\circ h[D_{1}]=g[h[D_{1}]]\subseteq g[D_{2}]\subseteq D_{3}$; and, for $\nu\in\mathcal{F}_{1}$, $h\circ\nu\in\mathcal{F}_{2}$ given that $h$ is a morphism, and therefore $(g\circ h)\circ\nu=g\circ (h\circ\nu)\in\mathcal{F}_{3}$ given that $g$ is also a morphism. All of this of course implies that $g\circ h$ remains a morphism, and therefore the composition of morphisms returns morphisms.

Associativity of the composition of morphisms comes from the fact these are functions, and the identity morphisms are precisely the identity functions, which are trivially seem to satisfy all necessary requirements.
\end{proof}

So we define now a category of restricted swap structures for each $C_{n}$, in order to display some of the nice model-theoretical and categorical properties of the RNmatrices $\mathcal{RM}^{\mathcal{B}}_{C_{n}}$. Let $\textbf{RSwap}_{C_{n}}$ be the category constructed from the class of RNmatrices $\mathcal{RM}_{C_{n}}^{\mathcal{B}}$, for $\mathcal{B}$ a non-trivial Boolean algebra. More explicitly, $\textbf{RSwap}_{C_{n}}$ is the category with: as objects, the (proper) class of (full) restricted swap structures $\mathcal{A}_{C_{n}}^{\mathcal{B}}$, for every Boolean algebra $\mathcal{B}$. As morphisms from $\mathcal{A}_{C_{n}}^{\mathcal{B}_{1}}$ to $\mathcal{A}_{C_{n}}^{\mathcal{B}_{2}}$ (for Boolean algebras $\mathcal{B}_{1}$ and $\mathcal{B}_{2}$), all homomorphisms $h:\mathcal{A}_{C_{n}}^{\mathcal{B}_{1}}\rightarrow\mathcal{A}_{C_{n}}^{\mathcal{B}_{2}}$ of $\Sigma$-multialgebras such that:\footnote{One can actually prove that, in the case of $\textbf{RSwap}_{C_{n}}$, is not necessary to assume $h[D_{n}^{\mathcal{B}_{1}}]\subseteq D_{n}^{\mathcal{B}_{2}}$, given that the nature of the homomorphisms in $\mathcal{F}_{C_{n}}^{\mathcal{B}_{1}}$ and $\mathcal{F}_{C_{n}}^{\mathcal{B}_{2}}$ already implies this property. We still maintain the first condition for both homogeneity and simplicity.} ~(1)~ $h[D_{n}^{\mathcal{B}_{1}}]\subseteq D_{n}^{\mathcal{B}_{2}}$; and~(2)~for any $\nu$ in $\mathcal{F}_{C_{n}}^{\mathcal{B}_{1}}$, $h\circ\nu$ is in $\mathcal{F}_{C_{n}}^{\mathcal{B}_{2}}$.

\begin{figure}[H]
\centering
\begin{tikzcd}
    \mathcal{A}_{C_{n}}^{\mathcal{B}_{1}} \arrow{rr}{h}  &  & \mathcal{A}_{C_{n}}^{\mathcal{B}_{2}}\\
    & \textbf{F}(\Sigma, \mathcal{V}) \arrow{ul}{\nu} \arrow{ur}{h\circ\nu} & 
  \end{tikzcd}
  \caption*{If $h$ is a morphism from $\mathcal{A}^{\mathcal{B}_{1}}_{C_{n}}$ to $\mathcal{A}^{\mathcal{B}_{2}}_{C_{n}}$, and $\nu\in\mathcal{F}_{C_{n}}^{\mathcal{B}_{1}}$, $h\circ\nu$ is in $\mathcal{F}_{C_{n}}^{\mathcal{B}_{2}}$}
\end{figure}

\begin{prop}
$\textbf{RSwap}_{C_{n}}$ is a category.
\end{prop}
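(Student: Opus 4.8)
The plan is to show that $\textbf{RSwap}_{C_n}$ is a special instance of the general construction from Theorem~\ref{category_of_RNmatrices}, so that categoryhood follows essentially for free. Concretely, I would first observe that the class of RNmatrices $\matGM_{C_n}^\B = (\A_{C_n}^\B, D_n^\B, \matF_{C_n}^\B)$, indexed by non-trivial Boolean algebras $\B$, is a class $C$ of RNmatrices over the signature $\Sigma$ in the sense of Section~\ref{Category}; and that the morphisms declared for $\textbf{RSwap}_{C_n}$ — namely $\Sigma$-multialgebra homomorphisms $h:\A_{C_n}^{\B_1}\to\A_{C_n}^{\B_2}$ satisfying (1) $h[D_n^{\B_1}]\subseteq D_n^{\B_2}$ and (2) $h\circ\nu\in\matF_{C_n}^{\B_2}$ for every $\nu\in\matF_{C_n}^{\B_1}$ — are exactly the morphisms of the category $\mathcal{C}$ associated to $C$ by that theorem (note that condition (3) of the general definition, ``absorbed by restricted valuations,'' is precisely condition (2) here, and (2) there is (1) here). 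Since identifying an object $\A_{C_n}^\B$ with the RNmatrix $\matGM_{C_n}^\B$ is harmless (the other two components $D_n^\B$ and $\matF_{C_n}^\B$ are determined by $\B$ and $n$), this identification is a bijection between objects and hom-sets of $\textbf{RSwap}_{C_n}$ and those of $\mathcal{C}$.

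Having made that identification, I would invoke Theorem~\ref{category_of_RNmatrices} directly: composition of two such morphisms is again a $\Sigma$-homomorphism (composition of multialgebra homomorphisms is a multialgebra homomorphism), it preserves designated values by the chain $g\circ h[D_n^{\B_1}]\subseteq g[D_n^{\B_2}]\subseteq D_n^{\B_3}$, and it is absorbed by restricted valuations since for $\nu\in\matF_{C_n}^{\B_1}$ we have $h\circ\nu\in\matF_{C_n}^{\B_2}$ and hence $(g\circ h)\circ\nu = g\circ(h\circ\nu)\in\matF_{C_n}^{\B_3}$. Associativity holds because these are functions under ordinary composition, and the identity map on each $\A_{C_n}^\B$ trivially satisfies (1) and (2), providing identity morphisms. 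Hence $\textbf{RSwap}_{C_n}$ is a category.

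There is genuinely very little obstacle here; the only point that deserves a sentence of care is the well-definedness of the class $C$ — i.e. that for each non-trivial $\B$ the triple $\matGM_{C_n}^\B$ is an RNmatrix with a \emph{nonempty} set of valuations, which is exactly what Proposition~\ref{val-nontriv} guarantees — and the remark (flagged in the paper's own footnote) that condition (1), $h[D_n^{\B_1}]\subseteq D_n^{\B_2}$, is in fact redundant given the structure of $\matF_{C_n}^{\B_1}$ and $\matF_{C_n}^{\B_2}$; I would not need to prove that redundancy to establish categoryhood, but I would note it so the reader sees that keeping (1) costs nothing. The ``hard part,'' such as it is, is purely bookkeeping: making sure the correspondence between the triple $(\A_{C_n}^\B, D_n^\B, \matF_{C_n}^\B)$ and the object-name $\A_{C_n}^\B$ is unambiguous, so that the general theorem applies verbatim.

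Concretely, then, the proof I would write is short:

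\begin{proof}
The class of RNmatrices $\{\matGM_{C_n}^\B : \B \text{ a non-trivial Boolean algebra}\}$, together with the morphisms specified above, is a particular case of the construction in Theorem~\ref{category_of_RNmatrices}, once each object $\A_{C_n}^\B$ of $\textbf{RSwap}_{C_n}$ is identified with the RNmatrix $\matGM_{C_n}^\B = (\A_{C_n}^\B, D_n^\B, \matF_{C_n}^\B)$ (a legitimate identification, since $D_n^\B$ and $\matF_{C_n}^\B$ are uniquely determined by $\B$ and $n$). Under this identification, a morphism $h$ of $\textbf{RSwap}_{C_n}$ is exactly a morphism of RNmatrices in the sense of Section~\ref{Category}: it is a $\Sigma$-multialgebra homomorphism, it satisfies $h[D_n^{\B_1}]\subseteq D_n^{\B_2}$, and it is absorbed by the restricted valuations, i.e. $h\circ\nu\in\matF_{C_n}^{\B_2}$ for all $\nu\in\matF_{C_n}^{\B_1}$. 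Therefore, by Theorem~\ref{category_of_RNmatrices}, $\textbf{RSwap}_{C_n}$ is a category.
\end{proof}
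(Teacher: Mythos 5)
Your proposal is correct and follows exactly the paper's own route: the paper proves this proposition simply by citing Theorem~\ref{category_of_RNmatrices}, since $\textbf{RSwap}_{C_{n}}$ is by construction an instance of the general category of RNmatrices with the three morphism conditions. Your extra remarks (identifying $\mathcal{A}_{C_{n}}^{\mathcal{B}}$ with the full triple, nonemptiness of $\mathcal{F}_{C_{n}}^{\mathcal{B}}$, redundancy of the designated-value condition) are harmless elaborations of the same argument.
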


\begin{proof}
Follows from Theorem \ref{category_of_RNmatrices}.
\end{proof}

\begin{prop}\label{hom._of_Bool_are_morp.}
Given Boolean algebras $\mathcal{B}_{1}$ and $\mathcal{B}_{2}$ and a homomorphism $f:\mathcal{B}_{1}\rightarrow\mathcal{B}_{2}$ of Boolean algebras, $h:\mathcal{A}^{\mathcal{B}_{1}}_{C_{n}}\rightarrow\mathcal{A}^{\mathcal{B}_{2}}_{C_{n}}$ defined by $h(z)=(f(z_{[1]}), \ldots  , f(z_{[n+1]}))$, for every $z=(z_{[1]}, \ldots  , z_{[n+1]})\in B_{n}^{\mathcal{B}_{1}}$, is a morphism of $\textbf{RSwap}_{C_{n}}$.
\end{prop}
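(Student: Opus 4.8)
The plan is to verify the three defining conditions of a morphism in $\textbf{RSwap}_{C_{n}}$: that $h$ is a well-defined homomorphism of $\Sigma$-multialgebras from $\A_{C_{n}}^{\mathcal{B}_{1}}$ to $\A_{C_{n}}^{\mathcal{B}_{2}}$, that $h[D_{n}^{\mathcal{B}_{1}}]\subseteq D_{n}^{\mathcal{B}_{2}}$, and that $\nu\in\matF_{C_{n}}^{\mathcal{B}_{1}}$ implies $h\circ\nu\in\matF_{C_{n}}^{\mathcal{B}_{2}}$. Throughout I will use that a Boolean homomorphism $f$ preserves $0$, $1$, $\land$, $\lor$, $\to$ and $\sneg$, hence also preserves and reflects the defining (in)equalities of $B_{n}^{\mathcal{B}}$, $D_{n}^{\mathcal{B}}$, $Boo_{n}^{\mathcal{B}}$ in the forward direction.

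First I would check that $h$ maps $B_{n}^{\mathcal{B}_{1}}$ into $B_{n}^{\mathcal{B}_{2}}$: if $z\in B_{n}^{\mathcal{B}_{1}}$ then $\big(\bigwedge_{i=1}^{k} z_{[i]}\big)\lor z_{[k+1]}=1$ for $1\leq k\leq n$, and applying $f$ gives $\big(\bigwedge_{i=1}^{k} f(z_{[i]})\big)\lor f(z_{[k+1]})=f(1)=1$, so $h(z)\in B_{n}^{\mathcal{B}_{2}}$. For the multialgebra homomorphism condition, take $z,w\in B_{n}^{\mathcal{B}_{1}}$. For $\tilde{\neg}$: if $u\in\tilde{\neg}\,z$, i.e. $u_{[1]}=z_{[2]}$ and $u_{[2]}\leq z_{[1]}$, then $f(u)_{[1]}=f(z_{[2]})=h(z)_{[2]}$ and $f(u)_{[2]}=f(u_{[2]})\leq f(z_{[1]})=h(z)_{[1]}$, so $h(u)\in\tilde{\neg}\,h(z)$. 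For $\#\in\{\land,\lor,\to\}$: if $u\in z\,\tilde{\#}\,w$ then $u_{[1]}=z_{[1]}\#w_{[1]}$, so $h(u)_{[1]}=f(z_{[1]})\#f(w_{[1]})=h(z)_{[1]}\#h(w)_{[1]}$; moreover if $z,w\in Boo_{n}^{\mathcal{B}_{1}}$ then $z_{[1]}\land z_{[2]}=0$ forces $f(z_{[1]})\land f(z_{[2]})=0$, so $h(z),h(w)\in Boo_{n}^{\mathcal{B}_{2}}$, and similarly $u\in Boo_{n}^{\mathcal{B}_{1}}$ gives $h(u)\in Boo_{n}^{\mathcal{B}_{2}}$, so the case split in $(C^{\mathcal{B}_{2},n}_{\tilde{\#}})$ lands on the correct branch and $h(u)\in h(z)\,\tilde{\#}\,h(w)$. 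The condition $h[D_{n}^{\mathcal{B}_{1}}]\subseteq D_{n}^{\mathcal{B}_{2}}$ is immediate: $z_{[1]}=1$ gives $f(z_{[1]})=1$.

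The substantive part is condition (2): given $\nu\in\matF_{C_{n}}^{\mathcal{B}_{1}}$, show $h\circ\nu\in\matF_{C_{n}}^{\mathcal{B}_{2}}$. Since $h\circ\nu$ is a composite of multialgebra homomorphisms it is itself one, so it remains to verify the three clauses of Definition~\ref{valCnB}. Here I would write $\nu(\alpha)=(z_{[1]},\ldots,z_{[n+1]})$, so $(h\circ\nu)(\alpha)=(f(z_{[1]}),\ldots,f(z_{[n+1]}))$, and use Remark~\ref{obs-val} to express $\nu(\alpha\land\neg\alpha)$, $\nu(\alpha^{1})$, $\nu(\alpha^{(n)})$ etc.\ in terms of the $z_{[i]}$. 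Clause~(1): from Remark~\ref{obs-val}, $\nu(\alpha\land\neg\alpha)_{[2]}=z_{[3]}=\nu(\alpha)_{[3]}$, and applying $h$ preserves this equality together with membership $\nu(\alpha\land\neg\alpha)\in\nu(\alpha)\,\tilde{\land}\,\nu(\neg\alpha)$ (already handled above); so $(h\circ\nu)(\alpha\land\neg\alpha)$ witnesses clause~(1) for $h\circ\nu$. Clause~(2): the tuple identity $\nu(\alpha^{1})=(z_{[3]},z_{[1]}\land z_{[2]},z_{[4]},\ldots,z_{[n+1]},\sneg\bigwedge_{i=1}^{n+1}z_{[i]})$ is preserved coordinatewise by $f$ because $f$ commutes with $\land$ and $\sneg$, yielding exactly the required form for $(h\circ\nu)(\alpha^{1})$ in terms of $(h\circ\nu)(\alpha)$. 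Clause~(3): $\nu((\alpha^{(n)}\land\beta^{(n)})\to(\alpha\#\beta)^{(n)})\in D_{n}^{\mathcal{B}_{1}}$ means its first coordinate is $1$; being a homomorphism, $(h\circ\nu)$ of that formula has first coordinate $f(1)=1$, hence lies in $D_{n}^{\mathcal{B}_{2}}$.

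I do not anticipate a genuine obstacle: the proof is a routine transport-of-structure argument, the one point requiring a little care being clause~(2) of Definition~\ref{valCnB}, where one must confirm that the last coordinate $\sneg\bigwedge_{i=1}^{n+1}\nu(\alpha)_{[i]}$ is carried by $f$ to $\sneg\bigwedge_{i=1}^{n+1}(h\circ\nu)(\alpha)_{[i]}$ — which holds because Boolean homomorphisms preserve finite meets and complementation. One should also note in passing that well-definedness of $h$ (that the listed formula does give a function $B_{n}^{\mathcal{B}_{1}}\to B_{n}^{\mathcal{B}_{2}}$) is subsumed in the first step above.
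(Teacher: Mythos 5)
Your proposal reconstructs the paper's own proof essentially step for step: the same three-part verification (that $h$ is a $\Sigma$-multialgebra homomorphism, that $h[D_n^{\mathcal{B}_1}]\subseteq D_n^{\mathcal{B}_2}$, and that $h$ absorbs restricted valuations), the same coordinatewise computations through $f$, and the same use of Remark~\ref{obs-val} to push the three clauses of Definition~\ref{valCnB} along $f$. As a reconstruction of the paper's argument it is faithful.

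There is, however, one step that is asserted rather than proved, and it is exactly the step the paper itself glosses over: the branch selection for $\tilde{\#}$ in the mixed case. You justify the Boolean branch when $z,w\in Boo_n^{\mathcal{B}_1}$, but when, say, $z\notin Boo_n^{\mathcal{B}_1}$ you implicitly assume that $h(z)\,\tilde{\#}\,h(w)$ is computed by the ``otherwise'' clause, so that only first coordinates matter. If $f$ is not injective this can fail, because a non-Boolean snapshot may be mapped to a Boolean one. Concretely, take $n=1$, let $\mathcal{B}_1$ be the four-element algebra with atoms $a,b$, let $\mathcal{B}_2$ be the two-element Boolean algebra, and let $f$ be the quotient homomorphism with $f(b)=0$, $f(a)=1$. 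Then $z=(1,b)$ is not Boolean, so $z\,\tilde{\land}\,z=\{u\in B_1^{\mathcal{B}_1} : u_{[1]}=1\}$, which contains $(1,a)$; but $h(z)=(1,0)$ is Boolean, so $h(z)\,\tilde{\land}\,h(z)=\{(1,0)\}$, while $h((1,a))=(1,1)\notin\{(1,0)\}$. Hence in the mixed case the required membership $h(u)\in h(z)\,\tilde{\#}\,h(w)$ genuinely needs $h(u)\in Boo_n^{\mathcal{B}_2}$, and the argument goes through only when $f(c)=0$ implies $c=0$, i.e.\ when $f$ is injective. This gap is inherited from the published proof rather than introduced by you --- the paper's own treatment of the non-Boolean case makes the identical tacit assumption --- but as written your justification of the homomorphism property is incomplete for non-injective $f$.
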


\begin{proof}
Remember $z$ is a snapshot on $Boo_{n}^{\mathcal{B}_{1}}$ iff it has the form $(a, {\sim}a, 1, \ldots  , 1)$ for some $a\in\mathcal{B}_{1}$. With this, $h(z)=(f(a), {\sim}f(a), 1, \ldots  , 1)$ is an element of $Boo_{n}^{\mathcal{B}_{2}}$, therefore implying $h$ preserves Boolean elements. Now take snapshots $w, z\in B_{n}^{\mathcal{B}_{1}}$. If either $w$ or $z$ is not Boolean and $u\in w\tilde{\#}z$ (\textit{id est} $u_{[1]}=w_{[1]}\# z_{[1]}$), $f$ being a homomorphism of Boolean algebras implies that $h(u)=(f(u_{[1]}), f(u_{[2]}), \ldots  , f(u_{[n+1]}))$ equals $(f(w_{[1]})\# f(z_{[1]}), f(u_{[2]}), \ldots  , f(u_{[n+1]}))$. We therefore deduce that $h(u)\in h(w)\tilde{\#}h(z)$ since $h(w)_{[1]}=f(w_{[1]})$ and $h(z)_{[1]}=f(z_{[1]})$. The remaining case, on which $w, z\in Boo_{n}^{\mathcal{B}_{1}}$, $u$ being in $w\tilde{\#}z$ implies, first of all, that $u$ is also Boolean (from the definition of $\tilde{\#}$), and so is $h(u)$ given that $h$ preserves Boolean snapshots. Second, $u_{[1]}=w_{[1]}\# z_{[1]}$ and thus $h(u)_{[1]}=h(w)_{[1]}\# h(z)_{[1]}$, leading one to $h(u)\in h(w)\tilde{\#} h(z)$. Finally, if $z$ is in $B_{n}^{\mathcal{B}_{1}}$ and $w$ is in $\tilde{\neg}\,z$ (equivalent to $w_{[1]}=z_{[2]}$ and $w_{[2]}\leq z_{[1]}$), $h(w)$ equals $(f(w_{[1]}), \ldots  , f(w_{[n+1]}))$ and analogously for $h(z)$, leading to $h(w)_{[1]}=h(z)_{[2]}$ and $h(w)_{[2]}\leq h(z)_{[1]}$, that is $h(w)\in\tilde{\neg}\, h(z)$, what finishes proving that $h$ is a homomorphism.

If $z=(1, z_{[2]}, \ldots  , z_{[n+1]})$ is a designated element of $\mathcal{A}_{C_{n}}^{\mathcal{B}_{1}}$, $h(z)=(f(1), f(z_{[2]}), \ldots  , f(z_{[n+1]}))$, which equals $(1, f(z_{[2]}), \ldots  , f(z_{[n+1]}))$ since $f$ is a homomorphism of Boolean algebras. Of course $h(z)$ is then also a designated element, and so $h[D_{n}^{\mathcal{B}_{1}}]\subseteq D_{n}^{\mathcal{B}_{2}}$.

Now, $\nu\in\mathcal{F}_{C_{n}}^{\mathcal{B}_{1}}$ whenever, for any formulas $\alpha$ and $\beta$: $\nu(\alpha\wedge\neg\alpha)_{[2]}=\nu(\alpha)_{[3]}$,
\[\nu(\alpha^{1})=(\nu(\alpha)_{[3]}, \nu(\alpha)_{[1]}\wedge\nu(\alpha)_{[2]}, \nu(\alpha)_{[4]}, \ldots  , \nu(\alpha)_{[n+1]}, {\sim}(\bigwedge_{i=1}^{n+1}\nu(\alpha)_{[i]}))\]
and $\nu((\alpha^{(n)}\wedge\beta^{(n)})\rightarrow(\alpha\#\beta)^{(n)})\in D_{n}^{\mathcal{B}_{1}}$, for any $\#\in\{\vee, \wedge, \rightarrow\}$. Quite clearly $h\circ\nu$ remains a homomorphism, so to prove $h$ is in $\textbf{RSwap}_{C_{n}}$ we have yet to prove that this homomorphism lies in $\mathcal{F}_{C_{n}}^{\mathcal{B}_{2}}$. By definition of $h$, $h(\nu(\alpha\wedge\neg\alpha))_{[2]}=f(\nu(\alpha\wedge\neg\alpha)_{[2]})$, and from the fact that $\nu$ lies in $\mathcal{F}_{C_{n}}^{\mathcal{B}_{1}}$ one obtains $f(\nu(\alpha)_{[3]})=h(\nu(\alpha))_{[3]}$. Since $h(\nu(\alpha^{1}))_{[1]}=f(\nu(\alpha)_{[3]})=h(\nu(\alpha))_{[3]}$, $h(\nu(\alpha^{1}))_{[2]}=f(\nu(\alpha)_{[1]}\wedge\nu(\alpha)_{[2]})=h(\nu(\alpha))_{[1]}\wedge h(\nu(\alpha))_{[2]}$, $h(\nu(\alpha^{1}))_{[i]}=f(\nu(\alpha)_{[i+1]})=h(\nu(\alpha))_{[i+1]}$ (for $3\leq i\leq n$) and $h(\nu(\alpha^{1}))_{[n+1]}=f({\sim}\bigwedge_{i=1}^{n+1}\nu(\alpha)_{[i]})={\sim}\bigwedge_{i=1}^{n+1}h(\nu(\alpha))_{[i]}$, $h$ satisfies the second condition for being in $\mathcal{F}_{C_{n}}^{\mathcal{B}_{2}}$. Finally, $z$ being designated (i.e. $z_{[1]}=1$) implies, by using that $h(z)$ equals $(f(1), f(z_{[2]}), \ldots  , f(z_{[n+1]}))$, that $h(z)$ is also designated, so $h(\nu((\alpha^{(n)}\wedge\beta^{(n)})\rightarrow(\alpha\#\beta)^{(n)}))$ is always on $D_{n}^{\mathcal{B}_{2}}$.
\end{proof}

Motivated by Proposition~\ref{hom._of_Bool_are_morp.}, the identity morphism of $\mathcal{A}_{C_{n}}^{\mathcal{B}}$ on $\textbf{RSwap}_{C_{n}}$ may be written, on an arbitrary $z\in B_{n}^{\mathcal{B}}$, as $(Id_{\mathcal{B}}(z_{[1]}), \ldots  , Id_{\mathcal{B}}(z_{[n+1]}))$, for $Id_{\mathcal{B}}$ the identity homomorphism on $\mathcal{B}$, being therefore a particular case of the construction shown above; we set now to show that one actually has that all morphisms of the aforementioned category are of the described form.

\subsection{Morphisms of $\textbf{RSwap}_{C_{n}}$}

For a function $h:B^{\mathcal{B}_{1}}_{n}\rightarrow B^{\mathcal{B}_{2}}_{n}$ we may write, for an arbitrary snapshot $z\in B_{n}^{\mathcal{B}_{1}}$, $h(z)=(h_{1}(z), \ldots  , h_{n+1}(z))$ where, for $1\leq i\leq n+1$, $h_{i}$ is a function from $B_{n}^{\mathcal{B}_{1}}$ to $\mathcal{B}_{2}$.\footnote{Technically, $h_i=\pi_i \circ h$ where $\pi_i$ is the $ith$ projection from $B_{n}^{\mathcal{B}_{2}}$ to $|\mathcal{B}_{2}|$ for $1 \leq i \leq n+1$. Note that $h_i(z)=h(z)_{[i]}$, according to the notation previously adopted for snapshots.} Then, $h$ is a $\Sigma$-homomorphism iff, for $w, z\in B_{n}^{\mathcal{B}_{1}}$, $u\in w\tilde{\#}z$ and $v\in\tilde{\neg}\,z$, $h(u)\in h(w)\tilde{\#}h(z)$ and $h(v)\in \tilde{\neg}\,h(z)$, itself equivalent to $h_{1}(u)=h_{1}(w)\# h_{1}(z)$, and $h_{1}(v)=h_{2}(z)$ and $h_{2}(v)\leq h_{1}(z)$. 

Assuming now that $h$ is indeed a homomorphism, we prove that the function $g:|\mathcal{B}_{1}|\rightarrow |\mathcal{B}_{2}|$ defined by $g(a)=h_{1}((a, {\sim}a, 1, \ldots  , 1))$, for any $a\in\mathcal{B}_{1}$, satisfies $h_{1}(z)=g(z_{[1]})$, for any $z\in B_{n}^{\mathcal{B}_{1}}$. Indeed, take an arbitrary snapshot $z=(z_{[1]}, \ldots  , z_{[n+1]})$ in $B_{n}^{\mathcal{B}_{1}}$ and $z'=(z_{[1]}, {\sim}z_{[1]}, 1, \ldots  , 1)$. By definition of $g$ we have $h_{1}(z')=g(z_{[1]})$, so it remains to show that $h_{1}(z)=h_{1}(z')$. If one recalls that $t_{0}^{n}=(1, 1, 0, \ldots , 1)$, $z\,\tilde{\wedge}\, t_{0}^{n}$ and $z'\,\tilde{\wedge}\, t_{0}^{n}$ both coincide with $\{w\in B_{n}^{\mathcal{B}_{1}} \ : \ w_{[1]}=z_{[1]}\}$ (since $t^{n}_{0}\notin Boo_{n}^{\mathcal{B}_{1}}$), and therefore $z, z'\in z\,\tilde{\wedge}\, t_{0}^{n}$. Since $h$ is a homomorphism, $z, z'\in z\,\tilde{\wedge}\, t^{n}_{0}$ implies that $h_{1}(z)=h_{1}(z)\wedge h_{1}(t_{0}^{n})=h_{1}(z')$, what proves that $h_{1}(z)=g(z_{[1]})$. Additionally, since for all $a, b\in |\mathcal{B}_{1}|$, $(a, {\sim}a, 1, \ldots  , 1)\,\tilde{\#}\,(b, {\sim} b, 1, \ldots  , 1)=\{(a\# b, {\sim}(a\# b), 1, \ldots  , 1)\}$, for any $\#\in\{\vee, \wedge, \rightarrow\}$, we may also derive that $g(a\# b)=g(a)\# g(b)$. We henceforth write $h(z)=(g(z_{[1]}), h_{2}(z), \ldots  , h_{n+1}(z))$, for any snapshot $z$.

Going even further, we may also define the function $\theta:|\mathcal{B}_{1}|\rightarrow|\mathcal{B}_{2}|$ by $\theta(a)=h_{2}(({\sim}a, a, 1, \ldots  , 1))$, for any $a\in |\mathcal{B}_{1}|$. What we proceed to show is, first of all, that for any snapshot $z$, $h_{2}(z)=\theta(z_{[2]})$. Again, take an arbitrary element $z=(z_{[1]}, z_{[2]}, z_{[3]}, \ldots  , z_{[n+1]})\in B_{n}^{\mathcal{B}_{1}}$ and make $z'=({\sim}z_{[2]}, z_{[2]}, 1, \ldots  , 1)$. We have that $h_{2}(z')=\theta(z_{[2]})$, so it must be shown that $h_{2}(z)=h_{2}(z')$. Since $z$ is a snapshot, $z_{[1]}\vee z_{[2]}=1$ and so ${\sim}z_{[2]}\leq z_{[1]}$, leading us to define  $z^{*}=(z_{[2]}, {\sim}z_{[2]}, 1, \ldots  , 1)$, which satisfies $z^{*}\in \tilde{\neg}\,z\cap \tilde{\neg}\,z'$. From this, $h(z^{*})\in \tilde{\neg}\, h(z)$ and $h(z^{*})\in\tilde{\neg}\, h(z')$, meaning that $h_{1}(z^{*})=h_{2}(z)$ and $h_{1}(z^{*})=h_{2}(z')$ or, in other worlds, $h_{2}(z)=h_{2}(z')$.  From now on, we write $h(z)=(g(z_{[1]}), \theta(z_{[2]}), h_{3}(z), \ldots  , h_{n+1}(z))$.

But we are able to prove $g=\theta$ as well: for $a\in |\mathcal{B}_{1}|$, we define the snapshots $z=({\sim}a, a, 1, \ldots  , 1)$ and $z'=(a, {\sim}a, 1, \ldots  , 1)\in B_{n}^{\mathcal{B}_{1}}$. We have that $z'\in\tilde{\neg}\,z$ (actually $\tilde{\neg}\,z=\{z'\}$ and vice-versa) and therefore $h(z')\in\tilde{\neg}\, h(z)$, implying that $g(a)=\theta(a)$ and $\theta({\sim}a)\leq g({\sim}a)$, the first equation being the one we wanted to prove. We shall now write, given $g=\theta$, $h(z)=(g(z_{[1]}), g(z_{[2]}), h_{3}(z), \ldots  , h_{n+1}(z))$, for an arbitrary snapshot $z$.

We therefore have the following theorem, which summarizes our developments so far.

\begin{theorem}
If $h:\mathcal{A}_{C_{n}}^{\mathcal{B}_{1}}\rightarrow\mathcal{A}_{C_{n}}^{\mathcal{B}_{2}}$ is a $\Sigma$-homomorphism, there exists a function $g:|\mathcal{B}_{1}|\rightarrow|\mathcal{B}_{2}|$ such that $h_{1}(z)=g(z_{[1]})$ and $h_{2}(z)= g(z_{[2]})$, for any $z\in B_{n}^{\mathcal{B}_{1}}$.
\end{theorem}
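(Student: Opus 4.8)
The plan is to assemble the three computations carried out in the discussion immediately preceding the statement, packaging them into the single function $g$. First I would take $g\colon |\mathcal{B}_1| \to |\mathcal{B}_2|$ given by $g(a) = h_1((a,{\sim}a,1,\ldots,1))$; this is well defined since $(a,{\sim}a,1,\ldots,1)$ is always a (Boolean) snapshot of $B_n^{\mathcal{B}_1}$. To prove $h_1(z) = g(z_{[1]})$ for an arbitrary $z \in B_n^{\mathcal{B}_1}$, I would compare $z$ with $z' = (z_{[1]},{\sim}z_{[1]},1,\ldots,1)$ by multiplying both with a fixed snapshot $c$ having $c_{[1]} = 1$ and $c \notin Boo_n^{\mathcal{B}_1}$ --- for $n \geq 2$ one may take $c = t^n_0 = (1,1,0,1,\ldots,1)$, and for $n = 1$ the ``all ones'' snapshot $(1,1)$ does the job. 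Since $c$ is not Boolean, the defining clause of $\tilde{\land}$ gives $z\,\tilde{\land}\,c = z'\,\tilde{\land}\,c = \{u \in B_n^{\mathcal{B}_1} : u_{[1]} = z_{[1]}\}$, so $z,z' \in z\,\tilde{\land}\,c$; applying the $\Sigma$-homomorphism $h$ yields $h(z),h(z') \in h(z)\,\tilde{\land}\,h(c)$, hence $h_1(z) = h_1(z)\land h_1(c) = h_1(z') = g(z_{[1]})$.

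Next I would introduce $\theta\colon |\mathcal{B}_1| \to |\mathcal{B}_2|$ by $\theta(a) = h_2(({\sim}a,a,1,\ldots,1))$ and show $h_2(z) = \theta(z_{[2]})$. Given $z \in B_n^{\mathcal{B}_1}$, from $z_{[1]} \lor z_{[2]} = 1$ one has ${\sim}z_{[2]} \leq z_{[1]}$, so the snapshot $z^* = (z_{[2]},{\sim}z_{[2]},1,\ldots,1)$ lies simultaneously in $\tilde{\neg}\,z$ and in $\tilde{\neg}\,z''$, where $z'' = ({\sim}z_{[2]},z_{[2]},1,\ldots,1)$. Applying $h$, the element $h(z^*)$ belongs to both $\tilde{\neg}\,h(z)$ and $\tilde{\neg}\,h(z'')$, which by the definition of $\tilde{\neg}$ forces $h_1(z^*) = h_2(z)$ and $h_1(z^*) = h_2(z'') = \theta(z_{[2]})$; hence $h_2(z) = \theta(z_{[2]})$. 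Finally, to identify $g$ with $\theta$, for each $a$ I would take $z = ({\sim}a,a,1,\ldots,1)$ and $z' = (a,{\sim}a,1,\ldots,1)$ and observe that $\tilde{\neg}\,z = \{z'\}$: the defining conditions $w_{[1]} = a$ and $w_{[2]} \leq {\sim}a$ together with $w_{[1]} \lor w_{[2]} = 1$ pin down $w_{[2]} = {\sim}a$, after which the snapshot constraints of $B_n^{\mathcal{B}_1}$ force all later coordinates of $w$ to be $1$. Then $h(z') \in \tilde{\neg}\,h(z)$ gives $h_1(z') = h_2(z)$, i.e. $g(a) = \theta(a)$. Chaining the three identities yields $h_1(z) = g(z_{[1]})$ and $h_2(z) = \theta(z_{[2]}) = g(z_{[2]})$, as required.

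I do not expect a genuine obstacle here, since the statement only records facts already established in the running text; the points that call for a little care are the case split in the definition of $\tilde{\#}$ --- one must choose the auxiliary snapshot $c$ outside $Boo_n^{\mathcal{B}_1}$ so that the ``otherwise'' branch applies and the multioperation collapses to $\{u : u_{[1]} = z_{[1]}\}$ --- and the degenerate case $n = 1$, where $t^n_0$ has no third coordinate but $(1,1)$ plays exactly the same role.
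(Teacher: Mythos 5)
Your proposal is correct and follows essentially the same route as the paper: the same auxiliary functions $g$ and $\theta$, the same comparison of $z$ with the Boolean snapshots via a non-Boolean factor for $\tilde{\wedge}$ and via common elements of $\tilde{\neg}$, and the same identification $g=\theta$. Your explicit remark about $n=1$ is a harmless refinement, since in the paper's enumeration $t^1_0$ is precisely the all-ones pair $(1,1)$, so the paper's choice already covers that case.
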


Now we move to the second and third conditions for being a morphism of $\textbf{RSwap}_{C_{n}}$. From here on out, we assume that $h$ is absorbed by valuations of our RNmatrices, meaning that, for any restricted valuation $\nu:\textbf{F}(\Sigma, \mathcal{V})\rightarrow\mathcal{A}_{C_{n}}^{\mathcal{B}_{1}}$ in $\mathcal{F}_{C_{n}}^{\mathcal{B}_{1}}$, $h\circ\nu$ lies in $\mathcal{F}_{C_{n}}^{\mathcal{B}_{2}}$, and $h[D_{n}^{\mathcal{B}_{1}}]\subseteq D_{n}^{\mathcal{B}_{2}}$.

For any element $z=(1, z_{[2]}, \ldots  , z_{[n+1]})$ of $D_{n}^{\mathcal{B}_{1}}$ we then have that $h(z)=$\\$(g(1), g(z_{[2]}), h_{3}(z_{[3]}), \ldots  , h_{n+1}(z_{[n+1]}))$ is in $D_{n}^{\mathcal{B}_{2}}$, and therefore $g(1)=1$. Even more: for any formula $\alpha$ of $C_{n}$, given a $\nu\in\mathcal{F}_{C_{n}}^{\mathcal{B}_{1}}$ we have that $\nu(\alpha\wedge\neg\alpha\wedge\alpha^{(n)})=F_{n}$ (from Proposition~\ref{strong_negation}). Since $h\circ\nu$ must be in $\mathcal{F}_{C_{n}}^{\mathcal{B}_{2}}$, it follows that $h\circ\nu(\alpha\wedge\neg\alpha\wedge\alpha^{(n)})=F_{n}$, which implies that $h(F_{n})=F_{n}$ and therefore $g(0)=0$. Finally, we can then prove that $g$ is a homomorphism of Boolean algebras: we already know it satisfies $g(a\#b)=g(a)\# g(b)$, for every $\#\in\{\vee, \wedge, \rightarrow\}$, and $g(0)=0$ and $g(1)=1$. From this, for any $a\in |\mathcal{B}_{1}|$, $g(a)\vee g({\sim}a)=g(a\vee{\sim}a)=g(1)=1$ and $g(a)\wedge g({\sim}a)=g(a\wedge{\sim}a)=g(0)=0$, implying that $g({\sim}a)={\sim}g(a)$.

Finally, again for an arbitrary formula $\alpha$ and a restricted valuation $\nu\in\mathcal{F}_{C_{n}}^{\mathcal{B}_{1}}$, the relevant property here will be that $\nu(\alpha^{k})_{[1]}=\nu(\alpha)_{[k+2]}$, for $1\leq k\leq n-1$. Given a snapshot $z\in B_{n}^{\mathcal{B}_{1}}$, take a propositional variable $p$ and a restricted valuation on $\mathcal{A}_{C_{n}}^{\mathcal{B}_{1}}$ such that $\nu(p)=z$, and so $\nu(p^{k})_{[1]}=z_{[k+2]}$. Since $h\circ \nu$ must be a restricted valuation of $B_{n}^{\mathcal{B}_{2}}$, $h(\nu(p^{k}))_{[1]}=h(\nu(p))_{[k+2]}=h(z)_{[k+2]}=h_{k+2}(z)$. On another direction, $h(\nu(p^{k}))_{[1]}=g(\nu(p^{k})_{[1]})=g(z_{[k+2]})$, thus $h_{k+2}(z)=g(z_{[k+2]})$, for any $k\in\{1, \ldots  , n-1\}$. We may summarize what we obtained in the following theorem.

\begin{theorem}\label{morp._are_n-tuples_hom.}
If $h:\mathcal{A}_{C_{n}}^{\mathcal{B}_{1}}\rightarrow\mathcal{A}_{C_{n}}^{\mathcal{B}_{2}}$ is a morphism of $\textbf{RSwap}_{C_{n}}$, there exists a homomorphism $g:\mathcal{B}_{1}\rightarrow\mathcal{B}_{2}$ of Boolean algebras such that $h_{i}(z)=g(z_{[i]})$, for any $z\in B_{n}^{\mathcal{B}_{1}}$ and $i\in\{1, \ldots  , n+1\}$.
\end{theorem}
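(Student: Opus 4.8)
The plan is to assemble the pieces established in the preceding discussion and then close the remaining gap, which is showing that the coordinate functions $h_{k+2}$ (for $3 \le k+2 \le n+1$, i.e.\ $1 \le k \le n-1$) are controlled by the same Boolean homomorphism $g$. First I would recall what is already in hand: by the previous theorem (the one immediately above) every $\Sigma$-homomorphism $h:\mathcal{A}_{C_{n}}^{\mathcal{B}_{1}}\rightarrow\mathcal{A}_{C_{n}}^{\mathcal{B}_{2}}$ satisfies $h_{1}(z)=g(z_{[1]})$ and $h_{2}(z)=g(z_{[2]})$ for the function $g(a):=h_{1}((a,\sneg a,1,\ldots,1))$, and $g$ is multiplicative for all $\#\in\{\vee,\wedge,\to\}$. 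Then I would invoke the two consequences of assuming $h$ is a genuine morphism of $\textbf{RSwap}_{C_{n}}$ (i.e.\ it absorbs restricted valuations and preserves designated elements): namely $g(1)=1$ (because $h$ maps $D_n^{\mathcal{B}_1}$ into $D_n^{\mathcal{B}_2}$, forcing the first coordinate of $h((1,z_{[2]},\ldots))$ to be $1$) and $g(0)=0$ (because for any $\nu\in\mathcal{F}_{C_n}^{\mathcal{B}_1}$ we have $\nu(\alpha\wedge\neg\alpha\wedge\alpha^{(n)})=F_n$ by Proposition~\ref{strong_negation}, and $h\circ\nu$ must again be in $\mathcal{F}_{C_n}^{\mathcal{B}_2}$, so $h(F_n)=F_n$). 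Together with multiplicativity this upgrades $g$ to a bona fide Boolean-algebra homomorphism, since $g(\sneg a)=\sneg g(a)$ follows from $g(a)\vee g(\sneg a)=g(1)=1$ and $g(a)\wedge g(\sneg a)=g(0)=0$.

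The core new step is then the identification $h_{k+2}(z)=g(z_{[k+2]})$ for each $k\in\{1,\ldots,n-1\}$. Here I would use structurality of $\matGM_{C_n}^{\mathcal{B}_1}$: given any snapshot $z\in B_n^{\mathcal{B}_1}$, pick a propositional variable $p$ and, by Proposition~\ref{val-nontriv} (more precisely by the freeness of $\bF(\Sigma,\matV)$ together with the construction there), a restricted valuation $\nu\in\mathcal{F}_{C_n}^{\mathcal{B}_1}$ with $\nu(p)=z$. By the defining clause of $\mathcal{F}_{C_n}^{\mathcal{B}_1}$ (clause (2) of Definition~\ref{valCnB}, or equivalently Lemma~\ref{prop-val} / Remark~\ref{obs-val}) we have $\nu(p^{k})_{[1]}=\nu(p)_{[k+2]}=z_{[k+2]}$ for $1\le k\le n-1$. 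Now apply $h$: since $h$ is a morphism, $h\circ\nu\in\mathcal{F}_{C_n}^{\mathcal{B}_2}$, and evaluating the first coordinate of $h(\nu(p^{k}))$ two ways gives, on one hand, $h_1(\nu(p^k))=g(\nu(p^k)_{[1]})=g(z_{[k+2]})$ (using $h_1=g\circ\pi_1$), and on the other hand $(h\circ\nu)(p^k)_{[1]}=(h\circ\nu)(p)_{[k+2]}=h(z)_{[k+2]}=h_{k+2}(z)$ (using again the defining clause of $\mathcal{F}_{C_n}^{\mathcal{B}_2}$ applied to the valuation $h\circ\nu$). Equating the two yields $h_{k+2}(z)=g(z_{[k+2]})$, which completes the proof.

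The main obstacle I anticipate is not any single computation but making the valuation-selection argument airtight: one must be sure that for every snapshot $z\in B_n^{\mathcal{B}_1}$ there really exists a valuation $\nu\in\mathcal{F}_{C_n}^{\mathcal{B}_1}$ with $\nu(p)=z$ on some variable $p$. This is exactly what the construction in Proposition~\ref{val-nontriv}(i)--(ii) provides — one starts from an assignment of a $\mathcal{B}_1$-value to the propositional variables (in particular one can dictate $\mathsf{b}(p)=z_{[1]}$, $\mathsf{b}(\neg p)=z_{[2]}$, and $\mathsf{b}(p^i)=z_{[i+2]}$), extends it to a $\mathcal{B}_1$-valuation $\mathsf{b}$ for $C_n$ respecting all clauses (the only delicate one being $(V6)_n$, handled there), and then Lemma~\ref{lem-compl-Cn-B} turns $\mathsf{b}$ into the desired $\nu\in\mathcal{F}_{C_n}^{\mathcal{B}_1}$ with $\nu(p)=(\mathsf{b}(p),\mathsf{b}(\neg p),\mathsf{b}(p^1),\ldots,\mathsf{b}(p^{n-1}))=z$. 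Once this existence is granted, the rest is the routine two-way evaluation above, and gathering the coordinate identities $h_i(z)=g(z_{[i]})$ for all $i\in\{1,\ldots,n+1\}$ gives the theorem.
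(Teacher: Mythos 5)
Your proposal is correct and follows essentially the same route as the paper: it reuses the preceding theorem for the first two coordinates, derives $g(1)=1$ and $g(0)=0$ from preservation of designated values and from Proposition~\ref{strong_negation} to upgrade $g$ to a Boolean homomorphism, and then identifies $h_{k+2}(z)=g(z_{[k+2]})$ by evaluating $h(\nu(p^{k}))_{[1]}$ in two ways for a restricted valuation $\nu$ with $\nu(p)=z$. Your additional care in justifying the existence of such a $\nu$ via the construction of Proposition~\ref{val-nontriv} and Lemma~\ref{lem-compl-Cn-B} is a welcome refinement of a point the paper states without argument.
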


\subsection{$\textbf{BA}$ and $\textbf{RSwap}_{C_{n}}$ are isomorphic}

In this subsection it will be proven that the category $\textbf{RSwap}_{C_{n}}$ is isomorphic to the category $\textbf{BA}$ of (non-degenerate) Boolean algebras.  

\begin{prop}
Consider, for any Boolean snapshots $(a, {\sim}a, 1, \ldots  , 1), (b, {\sim}b, 1, \ldots  , 1)$ in $Boo_{n}^{\mathcal{B}}$, the following operations: 
$(a, {\sim}a, 1 \ldots  , 1)\#(b, {\sim}b, 1, \ldots  , 1)=(a\#b, {\sim}(a\#b), 1, \ldots  , 1)$,  for $\#\in\{\vee, \wedge, \rightarrow\}$;
${\sim}(a, {\sim}a, 1, \ldots  , 1)=({\sim}a, a, 1, \ldots  , 1)$; $\top=(1, 0, 1, \ldots  , 1)$; and $\bot=(0, 1, 1, \ldots  , 1)$. Then, $Boo_{n}^{\mathcal{B}}$ becomes a Boolean algebra with this structure. Furthermore, the map $\rho:|\mathcal{B}|\rightarrow Boo_{n}^{\mathcal{B}}$, defined by $\rho(a)=(a, {\sim}a, 1, \ldots  , 1)$, is an isomorphism of Boolean algebras.
\end{prop}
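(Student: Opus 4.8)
The plan is to verify directly that the operations defined on $Boo_n^{\mathcal{B}}$ satisfy the Boolean algebra axioms by transporting them along the bijection $\rho$, and then observe that $\rho$ is by construction a homomorphism that is also bijective. First I would note that $\rho$ is a bijection: it is injective because $\rho(a)=\rho(b)$ forces $a=b$ by comparing first coordinates, and it is surjective because every element of $Boo_n^{\mathcal{B}}$ has, by the characterization recalled just before Definition~\ref{defACnB} (namely $z\in Boo_n^{\mathcal{B}}$ iff $z=(a,\sneg a,1,\ldots,1)$ for some $a\in|\mathcal{B}|$), precisely the form $\rho(a)$. Thus $\rho$ provides a set-theoretic identification of $Boo_n^{\mathcal{B}}$ with $|\mathcal{B}|$.

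Next I would check that $\rho$ is a homomorphism with respect to the operations as defined in the statement. This is immediate by construction: $\rho(a\#b)=(a\#b,\sneg(a\#b),1,\ldots,1)$ which is exactly $\rho(a)\#\rho(b)$ by the definition of $\#$ on $Boo_n^{\mathcal{B}}$; similarly $\rho(\sneg a)=(\sneg a, a, 1,\ldots,1)=\sneg\rho(a)$, and $\rho(1)=(1,0,1,\ldots,1)=\top$, $\rho(0)=(0,1,1,\ldots,1)=\bot$. Since $\rho$ is a bijection that commutes with all the operations, and since $|\mathcal{B}|$ with its given Boolean operations is a Boolean algebra, the transported structure on $Boo_n^{\mathcal{B}}$ must also satisfy all the Boolean algebra axioms (each axiom is an identity in the operations, and identities are preserved and reflected by bijective homomorphisms). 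Hence $Boo_n^{\mathcal{B}}$ is a Boolean algebra and $\rho$ is an isomorphism.

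One small point that deserves explicit attention is well-definedness: one must confirm that the operations as written actually land inside $Boo_n^{\mathcal{B}}$, i.e.\ that $(a\#b,\sneg(a\#b),1,\ldots,1)$ and $(\sneg a, a, 1,\ldots,1)$ are genuinely elements of $Boo_n^{\mathcal{B}}$. This follows since any tuple of the form $(c,\sneg c,1,\ldots,1)$ satisfies the defining conditions of $B_n^{\mathcal{B}}$ (each $\bigl(\bigwedge_{i=1}^k z_{[i]}\bigr)\lor z_{[k+1]}$ is $1$: for $k=1$ it is $c\lor\sneg c=1$, and for $k\geq 2$ the term $z_{[k+1]}=1$) and also $z_{[1]}\land z_{[2]}=c\land\sneg c=0$, so it lies in $Boo_n^{\mathcal{B}}$. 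I do not anticipate a genuine obstacle here; the only mild subtlety is making sure the argument that "a structure bijective to a Boolean algebra via an operation-preserving map is itself a Boolean algebra" is phrased cleanly — the cleanest route is simply to say that $\rho^{-1}$ carries each Boolean axiom of $|\mathcal{B}|$ to the corresponding statement in $Boo_n^{\mathcal{B}}$, so no axiom needs to be checked by hand.
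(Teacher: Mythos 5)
Your proof is correct: the paper itself omits the argument as ``self-evident,'' and your transport-of-structure verification (well-definedness of the operations, bijectivity of $\rho$ via the characterization $z\in Boo_n^{\mathcal{B}}$ iff $z=(a,\sneg a,1,\ldots,1)$, and preservation of the operations by construction) is exactly the routine check the authors had in mind. Nothing is missing.
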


We omit the proof of the previous proposition given that it is self-evident. More importantly, notice the operations we have defined and that make $Boo_{n}^{\mathcal{B}}$ into a Boolean algebra are the only ones that make it into a submultialgebra of $\mathcal{A}_{C_{n}}^{\mathcal{B}}$, \textit{id est}, they satisfy $\rho(a)\tilde{\#}\rho(b)=\{\rho(a)\#\rho(b)\}$ and $\tilde{\neg}\,\rho(a)=\{{\sim}\rho(a)\}$, for any $\#\in\{\vee, \wedge, \rightarrow\}$ and $a, b\in\mathcal{B}$ (of course, one has $\sim$ as negation, while the other has $\neg$, but this is mostly notational).

We will now make use of the category $\textbf{BA}$ of non-degenerate Boolean algebras (that is, Boolean algebras with $0\neq 1$), equipped with homomorphisms of Boolean algebras as morphisms. We then define the functors: ~(1)~ $\mathcal{A}_{n}:\textbf{BA}\rightarrow\textbf{RSwap}_{C_{n}}$\label{An}, taking a Boolean algebra $\mathcal{B}$ to $\mathcal{A}_{n}\mathcal{B}=\mathcal{A}_{C_{n}}^{\mathcal{B}}$, and a homomorphism $g:\mathcal{B}_{1}\rightarrow\mathcal{B}_{2}$ to the morphism $\mathcal{A}_{n}g:\mathcal{A}_{C_{n}}^{\mathcal{B}_{1}}\rightarrow\mathcal{A}_{C_{n}}^{\mathcal{B}_{2}}$ such that, for any snapshot $z\in B_{n}^{\mathcal{B}_{1}}$, $\mathcal{A}_{n}g(z)_{[i]}=g(z_{[i]})$, for every $i\in\{1, \ldots  , n+1\}$; (2)~$\textbf{Boo}_{n}$\label{tBoon}, taking $\mathcal{A}_{C_{n}}^{\mathcal{B}}$ to $\mathcal{B}$,\footnote{Equivalently, one could take, through the functor $\textbf{Boo}_{n}$, $\mathcal{A}_{C_{n}}^{\mathcal{B}}$ to the Boolean algebra $Boo_{n}^{\mathcal{B}}$, which is isomorphic to $\mathcal{B}$. Of course, in that case, $\mathcal{A}_{n}$ and $\textbf{Boo}_{n}$ would no longer be an isomorphism of categories, but rather an equivalence.} and a morphism $h:\mathcal{A}_{C_{n}}^{\mathcal{B}_{1}}\rightarrow\mathcal{A}_{C_{n}}^{\mathcal{B}_{2}}$ to the homomorphism of Boolean algebras $\textbf{Boo}_{n}h:\mathcal{B}_{1}\rightarrow\mathcal{B}_{2}$ defined by $\textbf{Boo}_{n}h(a)=h((a, {\sim}a, 1, \ldots  , 1))_{1}$, for any $a\in|\mathcal{B}|$.

\begin{prop}
As defined, $\mathcal{A}_{n}$ and $\textbf{Boo}_{n}$ are, indeed, functors.
\end{prop}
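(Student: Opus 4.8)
The plan is to check the two functor axioms (preservation of identities and of composition), together with well-definedness on objects and on arrows, for $\mathcal{A}_{n}$ and then for $\textbf{Boo}_{n}$; the substantive content has already been packaged in Proposition~\ref{hom._of_Bool_are_morp.} and Theorem~\ref{morp._are_n-tuples_hom.}, so what remains is essentially bookkeeping with the definitions.

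For $\mathcal{A}_{n}$: on objects there is nothing to check beyond recalling that, for a non-degenerate Boolean algebra $\mathcal{B}$, the multialgebra $\mathcal{A}_{C_{n}}^{\mathcal{B}}$ is by construction an object of $\textbf{RSwap}_{C_{n}}$. On arrows, given a homomorphism $g:\mathcal{B}_{1}\rightarrow\mathcal{B}_{2}$ of Boolean algebras, I would first observe that the coordinatewise map $z\mapsto(g(z_{[1]}),\ldots,g(z_{[n+1]}))$ really lands in $B_{n}^{\mathcal{B}_{2}}$: since $g$ preserves $\wedge$, $\vee$ and the constant $1$, it carries the defining condition $\big(\bigwedge_{i=1}^{k}z_{[i]}\big)\vee z_{[k+1]}=1$ of Definition~\ref{def-sets-B} for $B_{n}^{\mathcal{B}_{1}}$ to the analogous condition for $B_{n}^{\mathcal{B}_{2}}$; that the resulting function $\mathcal{A}_{n}g$ is in fact a morphism of $\textbf{RSwap}_{C_{n}}$ is then exactly Proposition~\ref{hom._of_Bool_are_morp.}. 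Preservation of identities and composition is immediate from the coordinatewise description: $\mathcal{A}_{n}(Id_{\mathcal{B}})$ acts as $z\mapsto(Id_{\mathcal{B}}(z_{[1]}),\ldots,Id_{\mathcal{B}}(z_{[n+1]}))=z$, hence is the identity morphism of $\mathcal{A}_{C_{n}}^{\mathcal{B}}$ (as already noted after Proposition~\ref{hom._of_Bool_are_morp.}), while $\mathcal{A}_{n}(g'\circ g)(z)_{[i]}=(g'\circ g)(z_{[i]})=g'\big(\mathcal{A}_{n}g(z)_{[i]}\big)=\big(\mathcal{A}_{n}g'\circ\mathcal{A}_{n}g\big)(z)_{[i]}$ for all $z$ and $i$.

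For $\textbf{Boo}_{n}$: on objects it sends $\mathcal{A}_{C_{n}}^{\mathcal{B}}$ to the non-degenerate Boolean algebra $\mathcal{B}$ (harmlessly, since the objects of $\textbf{RSwap}_{C_{n}}$ are indexed by Boolean algebras and, in any case, $\mathcal{B}\cong Boo_{n}^{\mathcal{B}}$ is recoverable from the multialgebra structure). On arrows, given a morphism $h:\mathcal{A}_{C_{n}}^{\mathcal{B}_{1}}\rightarrow\mathcal{A}_{C_{n}}^{\mathcal{B}_{2}}$ of $\textbf{RSwap}_{C_{n}}$, Theorem~\ref{morp._are_n-tuples_hom.} furnishes a homomorphism $g:\mathcal{B}_{1}\rightarrow\mathcal{B}_{2}$ of Boolean algebras with $h_{i}(z)=g(z_{[i]})$ for every $z\in B_{n}^{\mathcal{B}_{1}}$ and every $i$; evaluating at $z=(a,\sneg a,1,\ldots,1)$ and $i=1$ gives $\textbf{Boo}_{n}h(a)=h_{1}\big((a,\sneg a,1,\ldots,1)\big)=g(a)$, so $\textbf{Boo}_{n}h=g$ is indeed a morphism of $\textbf{BA}$. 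Preservation of identities is then $\textbf{Boo}_{n}(Id_{\mathcal{A}_{C_{n}}^{\mathcal{B}}})(a)=Id\big((a,\sneg a,1,\ldots,1)\big)_{[1]}=a$, i.e. $\textbf{Boo}_{n}(Id_{\mathcal{A}_{C_{n}}^{\mathcal{B}}})=Id_{\mathcal{B}}$; and if $h,h'$ correspond through Theorem~\ref{morp._are_n-tuples_hom.} to $g,g'$, then $(h'\circ h)(z)_{[i]}=h'_{i}(h(z))=g'\big(h(z)_{[i]}\big)=g'\big(g(z_{[i]})\big)$, so $h'\circ h$ corresponds to $g'\circ g$ and hence $\textbf{Boo}_{n}(h'\circ h)=g'\circ g=\textbf{Boo}_{n}h'\circ\textbf{Boo}_{n}h$.

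I do not expect a genuine obstacle: every step is a direct unwinding of the definitions of $\mathcal{A}_{n}$ and $\textbf{Boo}_{n}$ once Proposition~\ref{hom._of_Bool_are_morp.} and Theorem~\ref{morp._are_n-tuples_hom.} are in hand — the only point deserving even a word of care is the well-definedness of $\textbf{Boo}_{n}$ on objects, which as noted is innocuous. This proposition is exactly the bookkeeping that makes the subsequent assertion, namely that $\mathcal{A}_{n}$ and $\textbf{Boo}_{n}$ realize an isomorphism $\textbf{BA}\cong\textbf{RSwap}_{C_{n}}$, even meaningful.
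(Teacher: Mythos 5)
Your proposal is correct and follows essentially the same route as the paper: both arguments reduce the morphism-level well-definedness of $\mathcal{A}_{n}$ to Proposition~\ref{hom._of_Bool_are_morp.} and that of $\textbf{Boo}_{n}$ to Theorem~\ref{morp._are_n-tuples_hom.}, and then verify preservation of composition and identities by the same coordinatewise computations. The only difference is that you spell out a couple of points the paper leaves implicit (that $\mathcal{A}_{n}g$ lands in $B_{n}^{\mathcal{B}_{2}}$, and the identity checks), which is harmless.
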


\begin{proof}
As proved in Proposition~\ref{hom._of_Bool_are_morp.}, for Boolean algebras $\mathcal{B}_{1}$ and $\mathcal{B}_{2}$, and a homomorphism $g:\mathcal{B}_{1}\rightarrow\mathcal{B}_{2}$, the function $\mathcal{A}_{n}g:B_{n}^{\mathcal{B}_{1}}\rightarrow B_{n}^{\mathcal{B}_{2}}$ defined by $\mathcal{A}_{n}g(z)_{[i]}=g(z_{[i]})$, for every $i\in\{1, \ldots  , n+1\}$ and snapshot $z$, is indeed a morphism in $\textbf{RSwap}_{C_{n}}$. If we take a second homomorphim $k:\mathcal{B}_{2}\rightarrow\mathcal{B}_{3}$, $\mathcal{A}_{n}(k\circ g)(z)_{[i]}=k\circ g(z_{[i]})=k(\mathcal{A}_{n}g(z)_{[i]})=\mathcal{A}_{n}k(\mathcal{A}_{n}g(z))_{[i]}$, what leads to, if applied to all $1\leq i\leq n+1$, $\mathcal{A}_{n}(k\circ g)=\mathcal{A}_{n}k\circ\mathcal{A}_{n}g$. It is clear how, when applied to the identity homomorphism of $\mathcal{B}$, $\mathcal{A}_{n}$ returns the identity morphism of $\mathcal{A}_{C_{n}}^{\mathcal{B}}$.

Now, for the functor $\textbf{Boo}_{n}$: given a morphism $h:\mathcal{A}_{C_{n}}^{\mathcal{B}_{1}}\rightarrow\mathcal{A}_{C_{n}}^{\mathcal{B}_{2}}$, according to Theorem~\ref{morp._are_n-tuples_hom.} there exists a homomorphism $g:\mathcal{B}_{1}\rightarrow\mathcal{B}_{2}$ with $h(z)_{[i]}=g(z_{[i]})$, for any $i\in\{1, \ldots  , n+1\}$ and snapshot $z$, and so $\textbf{Boo}_{n}h(a)=h((a, {\sim}a, 1, \ldots  , 1))_{1}=g(a)$, which of course means $\textbf{Boo}_{n}h$ is indeed a morphism of $\textbf{BA}$. If we take a second morphism $l:\mathcal{A}_{C_{n}}^{\mathcal{B}_{2}}\rightarrow \mathcal{A}_{C_{n}}^{\mathcal{B}_{3}}$, and suppose $k:\mathcal{B}_{2}\rightarrow\mathcal{B}_{3}$ is the homomorphism with $l(w)_{[i]}=k(w_{[i]})$, for every $1\leq i\leq n+1$ and snapshot $z$ of $B_{n}^{\mathcal{B}_{2}}$, consider an element $a$ of $|\mathcal{B}_{1}|$. Then $\textbf{Boo}_{n}(l\circ h)(a)=l(h(\overline{a}))_{[1]}=k(h(\overline{a})_{[1]})=k(g(a))=\textbf{Boo}_{n}l\circ\textbf{Boo}_{n}h(a)$, where we denote $(a, {\sim}a, 1, \ldots  , 1)$ by $\overline{a}$. Proving that the identity morphism of $\mathcal{A}_{C_{n}}^{\mathcal{B}}$ is mapped by $\textbf{Boo}_{n}$ into the identity homomorphism of $\mathcal{B}$ is straightforward.
\end{proof}

\begin{theorem}
$\textbf{Boo}_{n}\circ\mathcal{A}_{n}=Id_{\textbf{BA}}$ and $\mathcal{A}_{n}\circ\textbf{Boo}_{n}=Id_{\textbf{RSwap}_{C_{n}}}$.
\end{theorem}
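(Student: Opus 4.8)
The plan is to verify the two functorial equalities directly, using the structural results already proved. For the composite $\textbf{Boo}_{n}\circ\mathcal{A}_{n}$ on objects, take a non-degenerate Boolean algebra $\mathcal{B}$: then $\mathcal{A}_{n}\mathcal{B}=\mathcal{A}_{C_{n}}^{\mathcal{B}}$, and applying $\textbf{Boo}_{n}$ returns $\mathcal{B}$ by definition of $\textbf{Boo}_{n}$ on objects, so the object part is immediate. On morphisms, take a homomorphism $g:\mathcal{B}_{1}\rightarrow\mathcal{B}_{2}$; then $\mathcal{A}_{n}g$ is the morphism with $(\mathcal{A}_{n}g)(z)_{[i]}=g(z_{[i]})$ for all $i$, and so $\textbf{Boo}_{n}(\mathcal{A}_{n}g)(a)=(\mathcal{A}_{n}g)((a,{\sim}a,1,\ldots,1))_{[1]}=g(a)$ for every $a\in|\mathcal{B}_{1}|$. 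Hence $\textbf{Boo}_{n}(\mathcal{A}_{n}g)=g$, which gives $\textbf{Boo}_{n}\circ\mathcal{A}_{n}=Id_{\textbf{BA}}$.

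For the other composite $\mathcal{A}_{n}\circ\textbf{Boo}_{n}$, the object part is again easy: $\textbf{Boo}_{n}(\mathcal{A}_{C_{n}}^{\mathcal{B}})=\mathcal{B}$ and $\mathcal{A}_{n}\mathcal{B}=\mathcal{A}_{C_{n}}^{\mathcal{B}}$, so the composite is the identity on objects. The crux is the morphism part, and this is where Theorem~\ref{morp._are_n-tuples_hom.} does the real work. Given a morphism $h:\mathcal{A}_{C_{n}}^{\mathcal{B}_{1}}\rightarrow\mathcal{A}_{C_{n}}^{\mathcal{B}_{2}}$ of $\textbf{RSwap}_{C_{n}}$, that theorem supplies a Boolean homomorphism $g:\mathcal{B}_{1}\rightarrow\mathcal{B}_{2}$ with $h(z)_{[i]}=g(z_{[i]})$ for every snapshot $z$ and every $i\in\{1,\ldots,n+1\}$. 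By definition, $\textbf{Boo}_{n}h(a)=h((a,{\sim}a,1,\ldots,1))_{[1]}=g(a)$, so $\textbf{Boo}_{n}h=g$; and then $\mathcal{A}_{n}(\textbf{Boo}_{n}h)=\mathcal{A}_{n}g$ is by definition the morphism acting coordinatewise as $g$ — which is exactly $h$, since $h(z)_{[i]}=g(z_{[i]})$ for all $z$ and $i$. Therefore $\mathcal{A}_{n}(\textbf{Boo}_{n}h)=h$, giving $\mathcal{A}_{n}\circ\textbf{Boo}_{n}=Id_{\textbf{RSwap}_{C_{n}}}$.

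The only subtle point — and the one I would flag as the main obstacle, though it has already been discharged — is the characterization in Theorem~\ref{morp._are_n-tuples_hom.} that \emph{every} morphism of $\textbf{RSwap}_{C_{n}}$ is coordinatewise induced by a Boolean homomorphism. Without that, $\textbf{Boo}_{n}h$ would only recover the behaviour of $h$ on the first two coordinates (via the earlier theorem that produces $g$ with $h_{1}(z)=g(z_{[1]})$, $h_{2}(z)=g(z_{[2]})$), and one would need the absorption-by-valuations condition to pin down $h_{3},\ldots,h_{n+1}$ as well; this is precisely the content recalled in the discussion preceding Theorem~\ref{morp._are_n-tuples_hom.}, using $\nu(\alpha^{k})_{[1]}=\nu(\alpha)_{[k+2]}$ together with valuations realizing an arbitrary snapshot as $\nu(p)$. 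Once one invokes that theorem, everything reduces to unwinding the definitions of the two functors, so the proof is a short verification; the compatibility with composition and identities was already established when $\mathcal{A}_{n}$ and $\textbf{Boo}_{n}$ were shown to be functors, so no further work is needed there.
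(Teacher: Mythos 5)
Your proof is correct and follows essentially the same route as the paper: both composites are checked directly on objects and morphisms, with the morphism case of $\mathcal{A}_{n}\circ\textbf{Boo}_{n}$ resting, exactly as in the paper, on Theorem~\ref{morp._are_n-tuples_hom.} to write an arbitrary morphism $h$ coordinatewise as a Boolean homomorphism $g$. Your closing remark correctly identifies that theorem as the point where all the real work is concentrated.
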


\begin{proof}
$\textbf{Boo}_{n}\circ\mathcal{A}_{n}$ is the identity on objects given that, while $\mathcal{A}_{n}$ takes $\mathcal{B}$ to $\mathcal{A}_{C_{n}}^{\mathcal{B}}$, $\textbf{Boo}_{n}$ takes $\mathcal{A}_{C_{n}}^{\mathcal{B}}$ back to $\mathcal{B}$. Regarding morphisms, given a homomorphism of Boolean algebras $g:\mathcal{B}_{1}\rightarrow\mathcal{B}_{2}$ and $a\in|\mathcal{B}_{1}|$, let us denote $\mathcal{A}_{n}g$ by $h$, and then $(\textbf{Boo}_{n}\circ\mathcal{A}_{n})g(a)=\textbf{Boo}_{n}h(a)=h((a, {\sim}a, 1, \ldots  , 1))_{[1]}=g(a)$.

To prove $\mathcal{A}_{n}\circ\textbf{Boo}_{n}$ is the identity of $\textbf{RSwap}_{C_{n}}$, we start by noticing that $\textbf{Boo}_{n}$ first takes $\mathcal{A}_{C_{n}}^{\mathcal{B}}$ to $\mathcal{B}$, which is then taken back by $\mathcal{A}_{n}$ to $\mathcal{A}_{C_{n}}^{\mathcal{B}}$, meaning we have the identity on objects. Given a morphism $h:\mathcal{A}_{C_{n}}^{\mathcal{B}_{1}}\rightarrow\mathcal{A}_{C_{n}}^{\mathcal{B}_{2}}$ of $\textbf{RSwap}_{C_{n}}$, we know that there exists a homomorphism $g:\mathcal{B}_{1}\rightarrow\mathcal{B}_{2}$ such that $h(z)_{[i]}=g(z_{[i]})$, for every $1\leq i\leq n+1$ and snapshot $z$ of $B_{n}^{\mathcal{B}_{1}}$, meaning that $\textbf{Boo}_{n}h=g$ and so $(\mathcal{A}_{n}\circ\textbf{Boo}_{n})h(z)=(g(z_{[1]}), \ldots  , g(z_{[n+1]}))$, which equals exactly $h(z)$.
\end{proof}

We have proved that $\textbf{BA}$ and $\textbf{RSwap}_{C_{n}}$ are isomorphic, and since the first is a very rich category, we may translate many of its properties to the category of restricted swap structures for $C_{n}$. To give a few examples, remember that every atomic and complete Boolean algebra is isomorphic to $\textbf{2}^{\kappa}$, for $\textbf{2}$ the two-valued Boolean algebra and $\kappa$ the number of atoms in our target algebra. Since every finite Boolean algebra is atomic and complete, and $\mathcal{RM}_{C_{n}}^{\mathcal{B}}$ is finite iff $\mathcal{B}$ is finite, we have the following.

\begin{coro}
Every finite $\mathcal{RM}_{C_{n}}^{\mathcal{B}}$ is isomorphic to a power of $\mathcal{RM}_{C_{n}}$.
\end{coro}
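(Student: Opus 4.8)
The plan is to derive this corollary directly from the isomorphism of categories $\textbf{Boo}_n \circ \mathcal{A}_n = Id_{\textbf{BA}}$ and $\mathcal{A}_n \circ \textbf{Boo}_n = Id_{\textbf{RSwap}_{C_n}}$ together with the structure theory of finite Boolean algebras. Since an isomorphism of categories preserves all categorical properties --- in particular products --- and since $\mathcal{A}_n$ and $\textbf{Boo}_n$ are mutually inverse on objects, it suffices to express a finite $\mathcal{RM}_{C_n}^{\mathcal{B}}$ as $\mathcal{A}_n$ applied to an appropriate power of the two-element Boolean algebra.

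First I would recall that $\mathcal{RM}_{C_n}^{\mathcal{B}}$ is finite if and only if $\mathcal{B}$ is finite: indeed, by the counting results of Section~\ref{Counting_snapshots}, if $\mathcal{B}$ has $2^m$ elements then $B_n^{\mathcal{B}}$ has $(n+2)^m$ elements, so finiteness of the underlying multialgebra is equivalent to finiteness of $\mathcal{B}$, and the restricted valuations form a subset of a set of functions into a finite set. Next, by Stone's representation theorem every finite Boolean algebra $\mathcal{B}$ is atomic and complete, hence isomorphic to $\textbf{2}^{\kappa}$ where $\kappa$ is the (finite) number of atoms of $\mathcal{B}$; equivalently $\mathcal{B} \cong \mathcal{P}(\textbf{m})$ with $m = \kappa$, as already used in Section~\ref{Counting_snapshots}. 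Then I would invoke the functor $\mathcal{A}_n$: applying it to the isomorphism $\mathcal{B} \cong \textbf{2}^{\kappa}$ in $\textbf{BA}$ yields an isomorphism $\mathcal{A}_n\mathcal{B} = \mathcal{A}_{C_n}^{\mathcal{B}} \cong \mathcal{A}_n(\textbf{2}^{\kappa})$ in $\textbf{RSwap}_{C_n}$, since any functor preserves isomorphisms.

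It then remains to identify $\mathcal{A}_n(\textbf{2}^{\kappa})$ with a power of $\mathcal{RM}_{C_n} = \mathcal{RM}_{C_n}^{\B_2}$ in the category-theoretic sense. The cleanest route is to observe that $\textbf{2}^{\kappa}$ is the $\kappa$-fold product of $\textbf{2}$ in $\textbf{BA}$; since $\mathcal{A}_n$ is (half of) an isomorphism of categories it preserves products, so $\mathcal{A}_n(\textbf{2}^{\kappa})$ is the $\kappa$-fold product of $\mathcal{A}_n\textbf{2} = \mathcal{A}_{C_n}^{\B_2} = \mathcal{A}_{C_n}$ in $\textbf{RSwap}_{C_n}$, i.e.\ a power of $\mathcal{RM}_{C_n}$. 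Chaining the two isomorphisms gives $\mathcal{RM}_{C_n}^{\mathcal{B}} \cong (\mathcal{RM}_{C_n})^{\kappa}$ in $\textbf{RSwap}_{C_n}$, which is the assertion.

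The main obstacle I anticipate is being precise about what ``a power of $\mathcal{RM}_{C_n}$'' means inside $\textbf{RSwap}_{C_n}$ and checking that products exist there in the form needed --- strictly speaking the corollary is a statement about the objects $\mathcal{A}_{C_n}^{\mathcal{B}}$ (restricted swap structures), so one must confirm that $\textbf{RSwap}_{C_n}$ inherits products from $\textbf{BA}$ along the isomorphism, which is immediate once the isomorphism of categories is in hand but should be stated. An alternative, more hands-on argument that sidesteps abstract product-preservation is to unwind the definitions directly: an isomorphism $f : \mathcal{B} \to \textbf{2}^{\kappa}$ of Boolean algebras induces, by Proposition~\ref{hom._of_Bool_are_morp.} (or Theorem~\ref{morp._are_n-tuples_hom.}), the morphism $h$ of $\textbf{RSwap}_{C_n}$ with $h(z)_{[i]} = f(z_{[i]})$, whose inverse is induced coordinatewise by $f^{-1}$; one then checks that $B_n^{\textbf{2}^{\kappa}}$ with its multioperations and restricted valuations is literally the $\kappa$-fold power of $B_n^{\B_2} = B_n$, computed coordinatewise in each factor, which follows from the fact that all the defining conditions (membership in $B_n^{\mathcal{B}}$, $D_n^{\mathcal{B}}$, $Boo_n^{\mathcal{B}}$, the multioperations $\tilde{\#}$ and $\tilde{\neg}$, and the clauses defining $\mathcal{F}_{C_n}^{\mathcal{B}}$) are expressed using only the Boolean operations and $\le$, which in a product are computed componentwise. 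Either way the computational content is routine; the only real care is the bookkeeping of what ``power'' means.
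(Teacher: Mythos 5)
Your main argument is correct and is essentially the paper's own: finiteness of $\mathcal{RM}_{C_{n}}^{\mathcal{B}}$ is equivalent to finiteness of $\mathcal{B}$, every finite Boolean algebra is atomic and complete and hence isomorphic to $\textbf{2}^{\kappa}$, and the isomorphism $\textbf{BA}\cong\textbf{RSwap}_{C_{n}}$ (via $\mathcal{A}_{n}$, which preserves isomorphisms and transfers products) carries this to $\mathcal{RM}_{C_{n}}^{\mathcal{B}}\cong\mathcal{A}_{n}(\textbf{2}^{\kappa})$, the $\kappa$-fold power of $\mathcal{RM}_{C_{n}}$ in $\textbf{RSwap}_{C_{n}}$.

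One caveat about your ``hands-on'' alternative: it is not literally true that $\mathcal{A}_{C_{n}}^{\textbf{2}^{\kappa}}$ is the coordinatewise power of the multialgebra $\mathcal{A}_{C_{n}}$. The case split in $(C^{\mathcal{B},n}_{\tilde{\#}})$ tests whether $z,w\in Boo_{n}^{\mathcal{B}}$, and membership in $Boo_{n}^{\textbf{2}^{\kappa}}$ is a \emph{global} condition (Boolean in every coordinate); so for a pair of snapshots that are Boolean in coordinate $i$ but not in coordinate $j$, the full swap structure over $\textbf{2}^{\kappa}$ imposes no Boolean constraint in coordinate $i$, whereas the componentwise product of copies of $\mathcal{A}_{C_{n}}$ would. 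Hence ``power'' must be understood categorically, i.e.\ as the product in $\textbf{RSwap}_{C_{n}}$ obtained by transfer along the isomorphism with $\textbf{BA}$ (equivalently, as $\mathcal{A}_{n}$ applied to a power of $\textbf{2}$), which is exactly what your primary route — and the paper — uses; the coordinatewise identification should be dropped or restricted to the level of snapshots and designated values, where it does hold.
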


On a stronger note, we know that every Boolean algebra is isomorphic to a field of sets, that is, a subalgebra of a power set algebra. Every power set algebra is itself complete and atomic, so we find that every Boolean algebra is isomorphic to a subalgebra of a power of $\textbf{2}$. To translate this result into $\textbf{RSwap}_{C_{n}}$ we need only to consider what is a substructure in this context. More generally, given RNmatrices $\mathcal{M}=(\mathcal{A}, D, \mathcal{F})$ and $\mathcal{M}^{*}=(\mathcal{A}^{*}, D^{*}, \mathcal{F}^{*})$ over the signature $\Theta$, $\mathcal{M}$ is a subRNmatrix of $\mathcal{M}^{*}$ if the universe $A$ of $\mathcal{A}$ is contained in the universe $A^{*}$ of $\mathcal{A}^{*}$ and the inclusion $j:A\rightarrow A^{*}$ satisfies: (1)~it is a $\Theta$-homomorphism between $\mathcal{A}$ and $\mathcal{A}^{*}$; (2)~$D\subseteq D^{*}$; and (3)~for every $\nu\in\mathcal{F}$, $j\circ\nu\in\mathcal{F}^{*}$. Of course, if both $\mathcal{M}$ and $\mathcal{M}^{*}$ are in a category $\mathcal{C}$ of RNmatrices as we have previously defined them, then $\mathcal{M}$ is a subRNmatrix of $\mathcal{M}^{*}$ iff $A\subseteq A^{*}$ and the inclusion $j$ is a morphism of $\mathcal{C}$.

\begin{lemma}
$\mathcal{RM}_{C_{n}}^{\mathcal{B}_{1}}$ is a subRNmatrix of $\mathcal{RM}_{C_{n}}^{\mathcal{B}_{2}}$ iff $\mathcal{B}_{1}$ is a subalgebra of $\mathcal{B}_{2}$.
\end{lemma}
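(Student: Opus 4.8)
The plan is to reduce everything to the structural description of morphisms of $\textbf{RSwap}_{C_{n}}$ obtained in Theorem~\ref{morp._are_n-tuples_hom.}, together with the characterization of subRNmatrices recalled just before the statement (inside a category of RNmatrices, being a subRNmatrix amounts to a containment of universes for which the inclusion is a morphism of the ambient category). So the whole argument splits into the two implications of the biconditional.

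For the direction ``$\mathcal{B}_{1}$ a subalgebra of $\mathcal{B}_{2}$ implies $\mathcal{RM}_{C_{n}}^{\mathcal{B}_{1}}$ a subRNmatrix of $\mathcal{RM}_{C_{n}}^{\mathcal{B}_{2}}$'', I would first observe that $B_{n}^{\mathcal{B}_{1}} \subseteq B_{n}^{\mathcal{B}_{2}}$: the coordinates of a snapshot $z \in B_{n}^{\mathcal{B}_{1}}$ lie in $|\mathcal{B}_{1}| \subseteq |\mathcal{B}_{2}|$, and the defining equalities $(\bigwedge_{i=1}^{k} z_{[i]}) \lor z_{[k+1]} = 1$ survive the passage to $\mathcal{B}_{2}$ because $\mathcal{B}_{1}$ inherits $\land$, $\lor$ and $1$ from $\mathcal{B}_{2}$. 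The inclusion $j \colon B_{n}^{\mathcal{B}_{1}} \hookrightarrow B_{n}^{\mathcal{B}_{2}}$ then satisfies $j(z)_{[i]} = z_{[i]} = \iota(z_{[i]})$, where $\iota \colon \mathcal{B}_{1} \to \mathcal{B}_{2}$ is the inclusion homomorphism of Boolean algebras; that is, $j = \mathcal{A}_{n}\iota$, which is a morphism of $\textbf{RSwap}_{C_{n}}$ by Proposition~\ref{hom._of_Bool_are_morp.}. Hence $A_{1}\subseteq A_{2}$ and the inclusion is a morphism of the category, which is exactly the required condition.

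For the converse, suppose $\mathcal{RM}_{C_{n}}^{\mathcal{B}_{1}}$ is a subRNmatrix of $\mathcal{RM}_{C_{n}}^{\mathcal{B}_{2}}$, so that $B_{n}^{\mathcal{B}_{1}} \subseteq B_{n}^{\mathcal{B}_{2}}$ and the inclusion $j$ is a morphism of $\textbf{RSwap}_{C_{n}}$. I would first note that $|\mathcal{B}_{1}| \subseteq |\mathcal{B}_{2}|$, since every $a \in |\mathcal{B}_{1}|$ occurs as the first coordinate of the Boolean snapshot $\overline{a} := (a, \sneg a, 1, \ldots, 1) \in Boo_{n}^{\mathcal{B}_{1}} \subseteq B_{n}^{\mathcal{B}_{2}}$. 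Then I would invoke Theorem~\ref{morp._are_n-tuples_hom.} to get a homomorphism $g \colon \mathcal{B}_{1} \to \mathcal{B}_{2}$ of Boolean algebras with $j(z)_{[i]} = g(z_{[i]})$ for all $z$ and $i$; since $j$ is literally the inclusion, $z_{[i]} = g(z_{[i]})$ for every coordinate of every snapshot, and evaluating on the $\overline{a}$ (whose first coordinates exhaust $|\mathcal{B}_{1}|$) forces $g = \mathrm{id}_{|\mathcal{B}_{1}|}$. Thus the inclusion $|\mathcal{B}_{1}| \hookrightarrow |\mathcal{B}_{2}|$ is a homomorphism of Boolean algebras, i.e.\ $|\mathcal{B}_{1}|$ is closed under the operations of $\mathcal{B}_{2}$ and these restrict to those of $\mathcal{B}_{1}$ (in particular $0$ and $1$ agree), which is precisely the statement that $\mathcal{B}_{1}$ is a subalgebra of $\mathcal{B}_{2}$.

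I do not expect a serious obstacle here; the only point requiring care is to recognize the set-theoretic inclusion of snapshot sets as $\mathcal{A}_{n}$ applied to a Boolean-algebra map, so that the rigidity theorem for morphisms of $\textbf{RSwap}_{C_{n}}$ (Theorem~\ref{morp._are_n-tuples_hom.}) can be brought to bear and pins $g$ down to the identity on $|\mathcal{B}_{1}|$. A secondary subtlety worth stating explicitly is that $B_{n}^{\mathcal{B}_{1}} \subseteq B_{n}^{\mathcal{B}_{2}}$ already delivers $|\mathcal{B}_{1}| \subseteq |\mathcal{B}_{2}|$ via the first coordinates of Boolean snapshots, which is what makes the notion ``subalgebra'' meaningful in the conclusion. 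Once these identifications are in place, both implications are short.
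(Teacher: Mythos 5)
Your proposal is correct and follows essentially the same route as the paper: the forward direction realizes the inclusion of snapshot sets as the morphism induced (via Proposition~\ref{hom._of_Bool_are_morp.}) by the inclusion homomorphism of Boolean algebras, and the converse extracts the Boolean homomorphism $g$ from Theorem~\ref{morp._are_n-tuples_hom.} and identifies it with the inclusion $|\mathcal{B}_{1}|\hookrightarrow|\mathcal{B}_{2}|$ by evaluating on the Boolean snapshots $(a,\sneg a,1,\ldots,1)$. The paper's converse is stated more tersely but rests on exactly this identification, so there is nothing to add.
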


\begin{proof}
Suppose first that $\mathcal{B}_{1}$ is a subalgebra of $\mathcal{B}_{2}$. If $z=(z_{[1]}, \ldots  , z_{[n+1]})$ is a snapshot of $B_{n}^{\mathcal{B}_{1}}$, meaning that $z\in |\mathcal{B}_{1}|^{n+1}$ and $(\bigwedge_{i=1}^{k}z_{[i]})\vee z_{[k+1]}=1$ for every $1\leq k\leq n$, it is true that: $z\in|\mathcal{B}_{2}|^{n+1}$, since $|\mathcal{B}_{1}|\subseteq|\mathcal{B}_{2}|$; and $(\bigwedge_{i=1}^{k}z_{[i]})\vee z_{[k+1]}=1$, now in $\mathcal{B}_{2}$, for every $1\leq k\leq n$, given that the operations in $\mathcal{B}_{2}$, over elements which also lie in $\mathcal{B}_{1}$, are the same as the operations of $\mathcal{B}_{1}$. So we now may consider the inclusion $j:B_{n}^{\mathcal{B}_{1}}\rightarrow B_{n}^{\mathcal{B}_{2}}$. It is a morphism of $\textbf{RSwap}_{C_{n}}$ since, for an arbitrary snapshot $z$, it may be written as $j(z)=(i(z_{[1]}), \ldots  , i(z_{[n+1]}))$, for $i:\mathcal{B}_{1}\rightarrow\mathcal{B}_{2}$ the inclusion homomorphism.

Reciprocally, suppose that $\mathcal{RM}_{C_{n}}^{\mathcal{B}_{1}}$ is a subRNmatrix of $\mathcal{RM}_{C_{n}}^{\mathcal{B}_{2}}$. Since $B_{n}^{\mathcal{B}_{1}}\subseteq B_{n}^{\mathcal{B}_{2}}$, for any $a\in|\mathcal{B}_{1}|$ we have that $(a, {\sim}a, 1, \ldots  , 1)\in B_{n}^{\mathcal{B}_{2}}$, and therefore $|\mathcal{B}_{1}|\subseteq |\mathcal{B}_{2}|$, so that we may consider the inclusion $i:|\mathcal{B}_{1}|\rightarrow|\mathcal{B}_{2}|$. It is a homomorphism of Boolean algebras because $i(a)=j((a, {\sim}a, 1, \ldots  , 1))_{[1]}$, for any $a$ in $\mathcal{B}_{1}$ and $j:\mathcal{RM}_{C_{n}}^{\mathcal{B}_{1}}\rightarrow\mathcal{RM}_{C_{n}}^{\mathcal{B}_{2}}$ the inclusion morphism.
\end{proof}

\begin{coro}
Every restricted swap structure $\mathcal{RM}_{C_{n}}^{\mathcal{B}}$ is a subRNmatrix of a power of $\mathcal{RM}_{C_{n}}$.
\end{coro}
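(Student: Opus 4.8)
The plan is to combine the isomorphism $\textbf{RSwap}_{C_{n}}\cong\textbf{BA}$ with the classical representation of Boolean algebras as fields of sets and with the preceding Lemma characterising subRNmatrices. Fix a non-trivial Boolean algebra $\mathcal{B}$. First I would invoke the fact recalled just above the corollary: $\mathcal{B}$ is isomorphic to a field of sets, so there is a set $X$ (for instance the Stone space of $\mathcal{B}$, i.e. its set of ultrafilters) together with an embedding of Boolean algebras $\mathcal{B}\hookrightarrow\mathcal{P}(X)$. Composing with the canonical isomorphism $\mathcal{P}(X)\cong\textbf{2}^{X}$, where $\textbf{2}$ is the two-element Boolean algebra, we may regard $\mathcal{B}$ as isomorphic to a subalgebra of $\textbf{2}^{X}$.

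Next I would apply the Lemma immediately preceding the corollary: since $\mathcal{B}$ is a subalgebra of $\textbf{2}^{X}$, the RNmatrix $\mathcal{RM}_{C_{n}}^{\mathcal{B}}$ is a subRNmatrix of $\mathcal{RM}_{C_{n}}^{\textbf{2}^{X}}$, the inclusion morphism being the one induced coordinatewise by the inclusion $\mathcal{B}\hookrightarrow\textbf{2}^{X}$, exactly as in that Lemma's proof. (If one insists on having a genuine subalgebra rather than a mere embedding, one first transports $\mathcal{RM}_{C_{n}}^{\mathcal{B}}$ onto the image of the embedding along the induced isomorphism, using functoriality of $\mathcal{A}_{n}$.)

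It then remains to recognise $\mathcal{RM}_{C_{n}}^{\textbf{2}^{X}}$ as a power of $\mathcal{RM}_{C_{n}}$. Here the categorical isomorphism does the work: $\textbf{Boo}_{n}$ sends $\mathcal{RM}_{C_{n}}=\mathcal{RM}_{C_{n}}^{\textbf{2}}$ to $\textbf{2}$, and an isomorphism of categories preserves all existing limits, in particular $X$-indexed products. Since $\textbf{2}^{X}$ is the $X$-th power of $\textbf{2}$ in $\textbf{BA}$, its image $\mathcal{A}_{n}(\textbf{2}^{X})=\mathcal{RM}_{C_{n}}^{\textbf{2}^{X}}$ is the $X$-th power of $\mathcal{A}_{n}(\textbf{2})=\mathcal{RM}_{C_{n}}$ in $\textbf{RSwap}_{C_{n}}$. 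Putting the two previous paragraphs together, $\mathcal{RM}_{C_{n}}^{\mathcal{B}}$ is a subRNmatrix of the power $(\mathcal{RM}_{C_{n}})^{X}$, which is the claim.

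The step I would treat most carefully, and the only genuinely delicate one, is the last: one must be explicit that ``power'' is understood in the categorical sense inside $\textbf{RSwap}_{C_{n}}$ and that the inverse pair $\mathcal{A}_{n},\textbf{Boo}_{n}$ carries $\textbf{2}^{X}$ to $(\mathcal{RM}_{C_{n}})^{X}$. Everything else is bookkeeping: the representation of $\mathcal{B}$ as a field of sets is standard, and the passage from ``subalgebra'' to ``subRNmatrix'' is precisely the content of the Lemma. If one prefers not to appeal to preservation of limits abstractly, an equally good alternative is to verify directly that $\mathcal{A}_{C_{n}}^{\textbf{2}^{X}}$ together with the coordinate projections satisfies the universal property of the $X$-fold product of $\mathcal{A}_{C_{n}}^{\textbf{2}}=\mathcal{A}_{C_{n}}$ in $\textbf{RSwap}_{C_{n}}$ --- which, once more, reduces via the isomorphism to the analogous statement in $\textbf{BA}$.
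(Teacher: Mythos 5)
Your proposal is correct and follows essentially the same route as the paper: represent $\mathcal{B}$ as (isomorphic to) a subalgebra of a power set algebra $\mathcal{P}(X)\cong\textbf{2}^{X}$, apply the preceding Lemma to get the subRNmatrix inclusion, and use the isomorphism $\mathcal{A}_{n},\textbf{Boo}_{n}$ to identify $\mathcal{RM}_{C_{n}}^{\textbf{2}^{X}}$ with the $X$-th power of $\mathcal{RM}_{C_{n}}$. Your explicit remarks on transporting along the embedding and on reading ``power'' categorically via preservation of limits only make precise what the paper leaves implicit.
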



\section{Final remarks} \label{FinRem}

This paper extends the application of RNmatrices to the study of da Costa's hierarchy we started in \cite{ConTol}, motivated by the same reasoning behind swap structures (\cite{CC16}). This provides characterizing semantics for each $C_{n}$ but, more importantly, offers an extensive class of models for these logics. Furthermore, it would seem that the same generalization for arbitrary Boolean algebras that takes $\mathcal{RM}_{C_{n}}$ to $\mathcal{RM}_{C_{n}}^{\mathcal{B}}$ could offer characterizations, as well as classes of models, for other systems of difficult treatment, including ones we have already presented RNmatrices for, such as $\mbCcl$ or $\cila$; and others we have not addressed yet, specially paraconsistent systems but also modal ones and possibly others. We also start an algebraic analysis of these systems, which appear to have a rich inner structure and could lead to a better understanding of models for $C_{n}$ altogether.

But the relevance of our restricted swap structures for da Costa's hierarchy is really made explicit by our characterization of their category. The very notion of a category of RNmatrices seems fruitful, and already possess many nice properties, but there is no reason one should expect it to be as well-behaved as $\textbf{RSwap}_{C_{n}}$ is. The fact that the category of restricted swap structures for $C_{n}$ is actually isomorphic to the category of non-trivial Boolean algebras suggests either the construction of swap structures as $n$-tuples, or the characteristics of da Costa's calculi themselves, or both, have properties capable of enriching their respective category of RNmatrices. Because of this, we are then inspired to study the category of restricted swap structures for other logics, not only for their own sake but also to clarify this question.

It is important too to look at the many applications of $\textbf{RSwap}_{C_{n}}$ which seem possible: after all, it is possible to capture much of the attributes of an algebraic logic from the variety of algebras performing the algebraization of the system. It is well-known that the  systems belonging to da Costa's hierarchy are not algebraizable (\cite{Mortensen80, Lewin}), but they do have corresponding categories of models capable of characterizing them, which in addition are isomorphic to the variety (the category of non-trivial Boolean algebras) which algebraize classical propositional logic.

\paragraph{Acknowledgements.} 

The first author acknowledges support from  the  National Council for Scientific and Technological Development (CNPq), Brazil
under research grant 306530/2019-8. The second author was supported by a doctoral scholarship from CAPES, Brazil.

\bibliographystyle{plain}

\begin{thebibliography}{10}

\bibitem{avr:05b}
A.~Avron.
\newblock Non-deterministic semantics for paraconsistent {C}-systems.
\newblock In {\em Proceedings of the VIII European Conference on Symbolic and
  Quantitative Approaches to Reasoning with Uncertainty (ECSQARU 2005)}, volume
  3571 of {\em Lecture Notes in Computer Science}, pages 625--637, Berlin,
  2005. Springer.

\bibitem{avr:07}
A.~Avron.
\newblock Non-deterministic semantics for logics with a consistency operator.
\newblock {\em International Journal of Approximate Reasoning}, 45(2):271--287,
  2007.

\bibitem{AK:05}
A.~Avron and B.~Konikowska.
\newblock Multi-valued calculi for logics based on non-determinism.
\newblock {\em Logic Journal of the IGPL}, 13:365--387, 2005.

\bibitem{avr:lev:01}
A.~Avron and I.~Lev.
\newblock Canonical propositional {G}entzen-type systems.
\newblock In {\em Proceedings of the First International Joint Conference on
  Automated Reasoning (IJCAR '01)}, pages 529--544, London, 2001.
  Springer-Verlag.

\bibitem{avr:lev:05}
A.~Avron and I.~Lev.
\newblock Non-deterministic multi-valued structures.
\newblock {\em Journal of Logic and Computation}, 15(3):241--261, 2005.

\bibitem{Baaz:13}
M.~Baaz, O.~Lahav, and A.~Zamansky.
\newblock A finite-valued semantics for canonical labelled calculi.
\newblock {\em J. of Automated Reasoning}, 51:401--430, 2013.

\bibitem{CM:19}
C.~Caleiro and S.~Marcelino.
\newblock Analytic calculi for monadic {PN}matrices.
\newblock In {\em Logic, Language, Information, and Computation}, volume 11541
  of {\em Lecture Notes in Computer Science}. Springer, 2019.

\bibitem{CC16}
W.~A. Carnielli and M.~E. Coniglio.
\newblock {\em Paraconsistent logic: Consistency, Contradiction and Negation},
  volume~40 of {\em Logic, Epistemology, and the Unity of Science}.
\newblock Springer, 2016.

\bibitem{CCM}
W.~A. Carnielli, M.~E. Coniglio, and J.~Marcos.
\newblock Logics of formal inconsistency.
\newblock In {\em Handbook of Philosophical Logic}, volume~14, pages 1--93.
  Springer, 2007.

\bibitem{con:far:per:15}
M.~E. Coniglio, L.~Fari\~nas~del Cerro, and N.~M. Peron.
\newblock Finite non-deterministic semantics for some modal systems.
\newblock {\em Journal of Applied Non-Classical Logic}, 25(1):20--45, 2015.

\bibitem{con:far:per:16}
M.~E. Coniglio, L.~Fari\~nas~del Cerro, and N.~M. Peron.
\newblock Errata and addenda to `{F}inite non-deterministic semantics for some
  modal systems'.
\newblock {\em Journal of Applied Non-Classical Logic}, 26(4):336--345, 2016.

\bibitem{Coniglio}
M.~E. Coniglio, A.~Figallo-Orellano, and A.~C. Golzio.
\newblock Non-deterministic algebraization of logics by swap structures.
\newblock {\em Logic Journal of the IGPL}, 28(5):1021--1059, 2020.
\newblock First published online: November 29, 2018.

\bibitem{ConTol}
M.~E. Coniglio and G.~V. Toledo.
\newblock Two decision procedures for da {C}osta's {C}n logics by {R}estricted
  {N}matrix semantics.
\newblock {\em Studia Logica}, 2021.
\newblock First published online: November 12, 2021. DOI:
  https://doi.org/10.1007/s11225-021-09972-z.

\bibitem{dC63}
N.~C.~A. da~Costa.
\newblock {\em Sistemas formais inconsistentes (Inconsistent Formal Systems,
  {\em in Portuguese})}.
\newblock Universidade do Paran\'a, Curitiba, 1963.
\newblock Republished by Editora UFPR, Brazil 1993.

\bibitem{dCA:77}
N.~C.~A. da~Costa and E.~H. Alves.
\newblock A semantical analysis of the calculi $\textbf{C}_{n}$.
\newblock {\em Notre Dame Journal of Formal Logic}, 18:621--630, 1977.

\bibitem{d'ott:cast:06}
I.~M.~L. D'Ottaviano and M.~A. Castro.
\newblock Analytical tableaux for da {C}osta's hierarchy of paraconsistent
  logics.
\newblock {\em Electronic Notes in Theoretical Computer Science}, 143:27--44,
  2006.

\bibitem{dug:40}
J.~Dugundji.
\newblock Note on a property of matrices for {L}ewis and {L}angford's calculi
  of propositions.
\newblock {\em The Journal of Symbolic Logic}, 5(4):150--151, 1940.

\bibitem{fid:77}
M.~M. Fidel.
\newblock The decidability of the calculi ${C}_n$.
\newblock {\em Reports on Mathematical Logic}, 8:31--40, 1977.

\bibitem{god:32}
K.~G\"odel.
\newblock Zum intuitionistischen aussagenkalk\"ul.
\newblock {\em Anzeiger der Akademieder Wissenschaften in Wien.
  Mathematisch-Naturwissenschaftliche Klasse}, 69:65--66, 1932.
\newblock Translated as ``On the intuitionistic propositional calculus''. In:
  S. Feferman, J. W. Jr. Dawson, S. C. Kleene, G. Moore, R. Solovay, and J. Van
  Heijenoort (Eds;), {\em Kurt G\"odel, Collected Works: Publications
  1929-1936}, p. 222-225, Oxford University Press, New York, 1986.

\bibitem{gratz:21}
L.~Gr\"atz.
\newblock Truth tables for modal logics {T} and {S}4, by using three-valued
  non-deterministic level semantics.
\newblock {\em Journal of Logic and Computation}, 2021.
\newblock First published online: December 13, 2021. DOI:
  https://doi.org/10.1093/logcom/exab068.

\bibitem{ivl:73}
Ju.~V. Ivlev.
\newblock Tablitznoe postrojenie propozicionalnoj modalnoj logiki
  ({T}ruth-tables for systems of propositional modal logic, in {R}ussian).
\newblock {\em Vest. Mosk. Univ., Seria Filosofia}, 6, 1973.

\bibitem{ivl:85}
Ju.~V. Ivlev.
\newblock {\em Sodierzatelnaja semantika modalnoj logiki ({C}ontentive semantic
  of modal logic, in {R}ussian)}.
\newblock Moscow, 1985.

\bibitem{ivl:88}
Ju.~V. Ivlev.
\newblock A semantics for modal calculi.
\newblock {\em Bulletin of the Section of Logic}, 17(3/4):114--121, 1988.

\bibitem{ivl:13}
Ju.~V. Ivlev.
\newblock Generalization of {K}almar's method for quasi-matrix logic.
\newblock {\em Logical Investigations}, 19:281--307, 2013.

\bibitem{jas:48}
S.~Ja\'{s}kowski.
\newblock Rachunek zda\'{n} dla system\'{o}w dedukcyjnych sprzecznych.
\newblock {\em Stud. Soc. Sci. Torun}, 5:55--77, 1948.
\newblock Translated as ``Propositional calculus for contradictory deductive
  systems'', {\em Studia Logica} 24, 143--157 (1969).

\bibitem{jas:49}
S.~Ja\'{s}kowski.
\newblock O koniunkcji dyskusyjnej w rachunku zda\'{n} dla system\'{o}w
  dedukcyjnych sprzecznych.
\newblock {\em Stud. Soc. Sci. Torun}, 8:171--172, 1949.
\newblock Translated as ``On the discussive conjunction in the propositional
  calculus for inconsistent deductive systems'', {\em Logic and Logical
  Philosophy} 7, 57--59 (1999).

\bibitem{kear:81}
J.~T. Kearns.
\newblock Modal semantics without possible worlds.
\newblock {\em The Journal of Symbolic Logic}, 46(1):77--86, 1981.

\bibitem{Lewin}
R.~A. Lewin, I.~F. Mikenberg, and M.~G. Schwarze.
\newblock ${C}_{1}$ is not algebraizable.
\newblock {\em Notre Dame Journal of Formal Logic}, 32:609--611, 1991.

\bibitem{lop:alv:80}
A.~Lopari\'c and E.~H. Alves.
\newblock The semantics of the systems $\textbf{C}_{n}$ of da {C}osta.
\newblock In {\em Proceedings of the Third Brazilian Conference on Mathematical
  Logic}, pages 161--172, Recife, Brazil, 1980. Sociedade Brasileira de
  L\'ogica.

\bibitem{Mortensen80}
C.~Mortensen.
\newblock Every quotient algebra for ${C}_1$ is trivial.
\newblock {\em Notre Dame Journal of Formal Logic}, 21:694--700, 1980.

\bibitem{omo:sku:16}
H.~Omori and D.~Skurt.
\newblock More modal semantics without possible worlds.
\newblock {\em IfCoLog Journal of Logics and their Applications},
  3(5):815--846, 2016.

\bibitem{OS:20}
H.~Omori and D.~Skurt.
\newblock A semantics for a failed axiomatization of ${K}$.
\newblock In N.~Olivietti, R.~Verbrugge, S.~Negri, and G.~Sandu, editors, {\em
  Advances in Modal Logic}, volume~13, pages 481--501. College Publications,
  2020.

\bibitem{Pawlowski}
P~Pawlowski.
\newblock Tree-like proof systems for finitely-many valued non-deterministic
  consequence relations.
\newblock {\em Logic Universalis}, 14(4):407--420, 2020.

\bibitem{paw:urb:18}
P.~Pawlowski and R.~Urbaniak.
\newblock Many-valued logic of informal provability: a non-deterministic
  strategy.
\newblock {\em Review of Symbolic Logic}, 11(2):207--223, 2018.

\bibitem{Piochi2}
B.~Piochi.
\newblock Matrici adequate per calcoli generali predicativi.
\newblock {\em Bolletino della Unione Matematica Italiana}, 15A:66--76, 1978.

\bibitem{Piochi}
B.~Piochi.
\newblock Logical matrices and non-structural consequence operators.
\newblock {\em Studia Logica}, 42:33--42, 1983.

\bibitem{res:62}
N.~Rescher.
\newblock Quasi-truth-functional systems of propositional logic.
\newblock {\em The Journal of Symbolic Logic}, 27(1):1--10, 1962.

\end{thebibliography}

\end{document}